\theoremstyle{plain}
\newtheorem{thm}{Theorem}[section]
\newtheorem*{theorem*}{Theorem}
\newtheorem{lem}[thm]{Lemma}
\newtheorem{prop}[thm]{Proposition}
\theoremstyle{definition}
\newtheorem{defn}[thm]{Definition}
\newtheorem{rem}[thm]{Remark}
\newtheorem{Qn}[thm]{Question}
\newtheorem*{cfpbl}{\tt CF problem}
\theoremstyle{plain}
\newcommand{\D}{\mathbb{D}}
\newcommand{\C}{\mathbb{C}}
\newcommand{\h}{\mathbb{H}}
\newcommand{\N}{\mathbb{N}}
\newcommand{\T}{\mathbb{T}}
\newcommand{\Z}{\mathbb{Z}}
\newcommand{\CF}{Carath\'{e}odory--Fej\'{e}r interpolation }
\newcommand{\VZ}{\boldsymbol{z}}
\title[Carath\'{e}odory--Fej\'{e}r interpolation problem]{The Carath\'{e}odory--Fej\'{e}r interpolation problem\\ for the polydisc}
\keywords{von Neumann inequality, The Carath\'{e}odory--Fej\'{e}r Interpolation Problem, complete polynomially extendible, spectral theorem, Kor\'{a}nyi--Puk\'{a}nszky Theorem, Nehari's Theorem, D-slice Ordering}
\author{Rajeev Gupta}
\author{Gadadhar Misra}
\address[G. Misra]{Department of Mathematics, Indian Institute of Science, Bangalore - 560012, India}
\address[R. Gupta]{Department of Mathematics and Statistics\\
Indian Institute of Technology\\
Kanpur - 208016}
\email[R. Gupta]{rajeevg@iitk.ac.in}
\email[G. Misra]{gm@math.iisc.ernet.in}
\begin{document}

\thanks{The first author was supported, in part,  by  the National Board for Higher Mathematics. The second author was supported, in part, through the J C Bose National Fellowship and UGC-SAP IV}

\thanks{The results of this paper are from the PhD thesis of the first author submitted to the Indian Institute of Science in the year 2015.}

\pagestyle{headings}

\begin{abstract}
We give an algorithm for finding a solution 
to the Carath\'{e}odory--Fej\'{e}r interpolation problem 
on the polydisc $\D^n,$ whenever it exists. 
A necessary condition for the existence 
of a solution becomes apparent from this algorithm. 
A generalization of the well-known theorem due to Nehari
has been obtained. A proof of the Kor\'{a}nyi--Puk\'{a}nszky 
theorem is given using the spectral theorem.  

\end{abstract}

\maketitle

\section{Introduction}
For a holomorphic map $h$ on the polydisc $\mathbb D^n:= \{\VZ:=(z_1,\ldots , z_n)\in \mathbb C^n: |z_i| < 1,\,1\leq i\leq n\},$ and any multi-index $I,$ set $h^{(I)}(z)= \big (\partial_1^{i_1} \cdots \partial_n^{i_n} h\big ) (\VZ),$ $\VZ \in \mathbb D^n.$ We recall below a version of the well-known Carath\'{e}odory--Fej\'{e}r interpolation problem. 
\addtocounter{subsection}{1}
 \subsection*{Carath\'{e}odory--Fej\'{e}r interpolation problem {\tt (CF problem (n, d) )}} 
Given any polynomial $p$ in $n$ - variables 
of degree $d,$ 
find necessary and sufficient conditions 
on the coefficients of $p$ to 
ensure the existence of a holomorphic function $h$ 
defined on the polydisc $\mathbb D^n$ such that 
$f:=p+h$ maps the polydisc $\mathbb D^n$ 
	into $\mathbb D$ and that for any multi-index $I$  with  $|I| \leq d,$ $h^{(I)}(\boldsymbol 0)=0.$ 

An explicit solution to the CF problem has been found in the case of $n=1,$ see \cite[p. 179]{Nik}.  More recently, several results (cf. \cite{BLTT,FFE,EPP,Woe,HMLZ}) have been obtained for the solution to the CF problem for $n>1.$  In this article, we present a reformulation of the {\tt CF problem (n,d)}. It involves  finding $d+1$ polynomials $p_0, \ldots, p_d$ from the polynomial $p$ given in the {\tt CF problem (n,d)} according to a well-defined and explicit rule. The reformulation asks for the existence of a contractive holomorphic function  $f: \mathbb D \to \mathcal{B}(L^2(\mathbb{T}^{n-1})),$ where $\mathbb T$ is the unit circle and $\mathbb T^{n-1}$ is the $(n-1)$ - fold cartesian product of $\mathbb T,$ extending the polynomial 
$P(z)=M_{p_0}+M_{p_1}z+\cdots + M_{p_d}z^d.$ The precise statement follows. 

%

\subsection*{Reformulation of the Carath\'{e}odory--Fej\'{e}r interpolation problem {\tt (CF problem (R))}} 
Let $P:\mathbb D\to \mathcal{B}(L^2(\mathbb{T}^{n-1}))$ be a polynomial of the form $$P(z)=M_{p_0}+M_{p_1}z+\cdots + M_{p_d}z^d,\,\,p_k\in\mathscr{M}^{(k)}_{n-1},$$ where 
$$\mathscr{M}^{(k)}_{n-1}:= \mbox{\rm span}\big \{z_1^{\alpha_1}z_2^{\alpha_2}\cdots z_{n-1}^{\alpha_{n-1}}:0\leq \alpha_1\leq\cdots \leq \alpha_{n-1},\, \alpha_1 + \cdots +\alpha_{n-1} \leq (n-1)k\big \},$$ and $M_{p_k}$ is the multiplication operator on $L^2(\mathbb{T}^{n-1}).$  Find necessary and sufficient condition on $p_0,\ldots,p_d$ ensuring the  existence of $p_{\ell}\in\mathscr{M}^{(\ell)}_{n-1}$ for each $\ell > d$ such that $f(z):=\sum_{s=1}^{\infty}M_{p_s}z^s$ maps $\mathbb{D}$ into the unit ball of $\mathcal{B}(L^2(\mathbb{T}^{n-1})).$

We show that the polynomials $p_0, \ldots, p_d,$ $p_k \in \mathscr{M}^{(k)}_{n-1},$ $0\leq k \leq d,$ determine uniquely a polynomial $p$ in $n$ - variables of degree $d$ and vice-versa making {\tt CF problem (R)} a reformulation of the {\tt CF problem (n,d)}.

Our method, in general, gives a (explicit) necessary condition 
for the existence of a solution to 
the {\tt CF problem (n,d)}. For instance, let $p$ be a polynomial in two variables of degree $2,$ $p(\boldsymbol 0)=0.$ Set 
\begin{equation}\label{1p1p2}
p_1(z)=\frac{\partial p}{\partial z_1}(\boldsymbol 0)+\frac{\partial p}{\partial z_2}(\boldsymbol 0)z
\mbox{ and }
p_2(z)=\frac{1}{2}\frac{\partial^2 p}{\partial z_1^2}(\boldsymbol 0)+\frac{\partial^2 p}{\partial z_1\partial z_2}(\boldsymbol 0)z+\frac{1}{2}\frac{\partial^2 p}{\partial z_2^2}(\boldsymbol 0)z^2.
\end{equation}
In this case, we show that $|p_1(z)|^2+|p_2(z)|\leq 1,$ $z\in \mathbb D,$ 
(this is abbreviated to $|p_1|^2+|p_2|\leq 1$) 
is a necessary condition for the existence of a solution to 
the {\tt CF problem (2,2)}.
By means of an example, we show 
that this necessary condition is not sufficient.  
For the {\tt CF problem (2,2)}, we  isolate a class of polynomials 
for which our necessary condition is also sufficient. 
This is verified using a deep theorem of 
Nehari reproduced below (cf. \cite[Theorem 15.14]{NY}). 

We also give an algorithm, in Section \ref{CFP}, for constructing 
a solution whenever such a solution exists. 
The algorithm involves finding, 
inductively, polynomials $p_k$ in $\mathscr{M}^{(k)}_{n-1}$ 
such that a certain block Toeplitz operator, 
made up of multiplication operators corresponding 
to these polynomials is contractive. 
A solution to the {\tt CF problem (n,d)} 
exists if and only if this process is completed successfully. 
If $n=1$ and the necessary condition we have obtained is met, 
then the algorithm completes successfully 
and produces a solution to 
the CF problem. 
Thus in this case, 
we fully recover the solution to 
the {\tt CF problem (1,d)}. 
In Section \ref{Nehari}, 
we define the Hankel operator $H_{\phi}$ corresponding 
to any function $\phi\in L^\infty(\T^n)$ and 
we give a possible generalization of Nehari's theorem.  
Finally, in Section \ref{sec 5}, 
we give a new proof of the Kor\'{a}nyi--Puk\'{a}nszky theorem 
using the spectral theorem.

Since the bi-holomorphic automorphism group 
of the unit disc $\mathbb D$ acts transitively on $\mathbb D$, 
it follows that the existence of a solution 
to the CF problem is independent of the constant term in $p.$ 
Hence we assume, without loss of generality, 
throughout this paper that $p(\boldsymbol 0) = 0.$ 

\section{Preliminaries}
In this section, we collect the tools that we use repeatedly in what follows. 
The first of these is a variant of the spectral theorem for a pair of commuting normal operators.  
Fix  a bounded open connected  subset $\Omega\subseteq \mathbb C^n$ and define 
the supremum  norm $\|p\|_{ \infty},$  
of a polynomial  in $n$ - variables taking values in 
some normed linear space $E,$  to be  $\sup_{\VZ\in \Omega} \,\|p(\VZ)\|.$

\begin{defn}[Multiplication Operator]
	For $\phi\in L^{\infty}(\T),$ 
	the {\em multiplication operator} 
	$M_{\phi}:L^2(\T)\to L^2(\T)$ 
	is defined by the point-wise product: $M_{\phi}(f)=\phi f.$
\end{defn}
Since $\phi  f\in L^2(\T)$ 
for any $\phi\in L^{\infty}(\T)$ and 
$f\in L^2(\T),$ 
therefore $M_{\phi}$ is well defined 
for all $\phi\in L^{\infty}(\T)$. 
Also $\|M_{\phi}\|=\|\phi\|_{\infty}$ (cf. Theorem 13.14 in \cite{NY}).

\begin{thm}\label{CaseBidisc}
If the power series 
	$\sum_{\boldsymbol \alpha \in \mathbb{N}_{0}^{2}}a_{\boldsymbol\alpha}\VZ^{\boldsymbol\alpha}$ 
	represents a holomorphic function $f$ 
	on the bidisc $\mathbb D^2$ 
	then $|f(\VZ)|\leq 1$ for all $\VZ\in \mathbb{D}^2$ 
	if and only if the operator norm of  
	\[\left( 
	\begin{array}{ccccc}
	  & \vdots & \vdots & \vdots & \\
	 \cdots & M_{p_0} & M_{p_1} & M_{p_2} & \cdots \\
	\cdots & 0 & M_{p_0} & M_{p_1} & \cdots \\
	\cdots & 0 & 0 & M_{p_0} & \cdots \\
		& \vdots & \vdots & \vdots & \\
	\end{array}\right)\]
is at most 1, where $p_n(z)=\sum\limits_{\substack{k=0}}^{n}a_{n-k,k}z^{k}$ 
is a polynomial of degree $n$ in one variable.
\end{thm}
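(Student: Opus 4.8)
The plan is to prove the sharper quantitative statement that the operator norm of the displayed bi-infinite block Toeplitz operator $T$ equals $\|f\|_{\infty}:=\sup_{\VZ\in\D^2}|f(\VZ)|$; the asserted equivalence (namely $|f|\leq 1$ on $\D^2$ if and only if $\|T\|\leq 1$) is then immediate. The whole argument rests on one elementary observation: restricting $f$ to the one–parameter slices $\zeta\mapsto(\zeta,\zeta w)$, $w\in\T$, and regrouping the power series by total degree yields
\[
f(\zeta,\zeta w)=\sum_{n=0}^{\infty}\Big(\sum_{k=0}^{n}a_{n-k,k}\,w^{k}\Big)\zeta^{n}=\sum_{n=0}^{\infty}p_{n}(w)\,\zeta^{n},\qquad \zeta\in\D,\ w\in\T,
\]
so that the one-variable polynomials $p_{n}$ of the statement are exactly the $\zeta$-Taylor coefficients of $f$ along these slices.

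Next I would realize $T$ as a functional calculus of commuting unitaries. Writing $T$ on $\ell^2(\Z;L^2(\T))\cong L^2(\T_\zeta)\otimes L^2(\T_w)\cong L^2(\T^2)$ and using the Fourier identification $\ell^2(\Z)\cong L^2(\T_\zeta)$, under which the bilateral shift becomes multiplication by $\zeta$, the $n$-th superdiagonal of $T$ (which carries $M_{p_{n}}$) becomes $M_{\zeta^{n}}\otimes M_{p_{n}}$. Setting $U=M_\zeta$ and $W=M_w$, and noting $M_{p_n}=p_n(W)$, we obtain $T=\sum_{n\geq 0}U^{n}p_{n}(W)=F(U,W)$, where $F(\zeta,w)=\sum_{n}\zeta^{n}p_{n}(w)=f(\zeta,\zeta w)$. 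Since $U$ and $W$ are commuting normal (indeed unitary) operators, I would invoke the variant of the spectral theorem recorded in the Preliminaries, together with the identity $\|M_{\phi}\|=\|\phi\|_{\infty}$, to conclude that $\|T\|=\|F(U,W)\|=\|F\|_{L^{\infty}(\T^{2})}$.

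To finish, observe that $\Theta(\zeta,w)=(\zeta,\zeta w)$ is a measure-preserving homeomorphism of $\T^2$ and that $F=f^{*}\circ\Theta$, where $f^{*}$ is the radial boundary function of $f$; hence $\|F\|_{L^{\infty}(\T^{2})}=\|f^{*}\|_{L^{\infty}(\T^{2})}$, which by the maximum modulus principle and the Hardy space theory on the polydisc equals $\sup_{\D^2}|f|$. Chaining the equalities gives $\|T\|=\|f\|_{\infty}$, which proves the claim.

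The main obstacle is the boundary bookkeeping hidden in the spectral-theorem step: rigorously justifying the convergence of the operator series $\sum_{n}U^{n}p_{n}(W)$ and the almost-everywhere identification of its symbol with $f^{*}\circ\Theta$ when $f$ is merely a bounded holomorphic function (so $f$ need not extend continuously to $\overline{\D^2}$). I would neutralize this by first proving the norm identity for the radial dilates $f_{r}(\VZ)=f(r\VZ)$, $0<r<1$: here $p_{n}$ is replaced by $r^{n}p_{n}$, the symbol series converges absolutely in operator norm, $f_r$ is continuous on $\overline{\D^2}$, and the maximum modulus principle gives $\|T_{r}\|=\sup_{\{|z_1|=|z_2|=r\}}|f|=\sup_{\overline{r\D}^2}|f|$. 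Letting $r\uparrow 1$, using that the entries of $T_{r}$ converge to those of $T$ while remaining uniformly bounded (so $T_{r}\to T$ in the weak operator topology and the norm is weakly lower semicontinuous) together with $\sup_{\overline{r\D}^2}|f|\uparrow\sup_{\D^2}|f|$, then packages the analytic content cleanly and, combined with the spectral-theorem realization above, yields the exact equality $\|T\|=\sup_{\D^2}|f|$.
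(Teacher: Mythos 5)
Your core argument is the same as the paper's. The pair $(M_\zeta,\,M_{\zeta w})$ acting on $L^2(\T^2)$ is, up to Fourier conjugation, exactly the pair $(I\otimes B^*,\,B^*\otimes B^*)$ the paper uses, and your slice substitution $f(\zeta,\zeta w)=\sum_n p_n(w)\zeta^n$ is the same total-degree regrouping; both proofs then combine the spectral theorem for commuting unitaries with the maximum modulus principle. Where you go beyond the paper is in flagging the boundary-value issue: the paper's proof speaks of $f(\T^2)$ although $f$ is only given as holomorphic and bounded on the open bidisc, and your instinct to regularize by radial dilation addresses a genuine subtlety that the paper's two-line proof glosses over.

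However, your patch closes only one direction of the equivalence, so as written there is a gap. Weak operator convergence $T_r\to T$ plus lower semicontinuity of the norm gives $\|T\|\leq\liminf_{r\to 1}\|T_r\|=\sup_{\D^2}|f|$, which proves the implication ``$|f|\leq 1$ on $\D^2$ implies $\|T\|\leq 1$.'' It gives nothing for the converse implication ``$\|T\|\leq 1$ implies $|f|\leq 1$,'' which requires the opposite inequality $\sup_{\D^2}|f|\leq\|T\|$, i.e.\ $\|T_r\|\leq\|T\|$ for each $r<1$; semicontinuity runs the wrong way for this, and the ``spectral-theorem realization'' cannot be invoked for $T$ itself, since justifying it there was precisely the gap you set out to avoid. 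The standard repair is one line: let $D_\theta$ be the block-diagonal unitary whose $j$-th diagonal block is $e^{ij\theta}I_{L^2(\T)}$; conjugation by $D_\theta$ multiplies the $n$-th superdiagonal of $T$ by $e^{-in\theta}$, so
\begin{equation*}
T_r=\frac{1}{2\pi}\int_0^{2\pi}P_r(\theta)\,D_\theta T D_\theta^*\,d\theta ,
\end{equation*}
where $P_r$ is the Poisson kernel. Since $P_r\geq 0$ and has mean $1$, this exhibits $T_r$ as an average of unitary conjugates of $T$, whence $\|T_r\|\leq\|T\|$, and therefore $\sup_{\D^2}|f|=\lim_{r\to1}\|T_r\|\leq\|T\|$. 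With this addition your argument is complete, and indeed sharper than the paper's: it establishes the exact equality $\|T\|=\sup_{\D^2}|f|$ with full rigor for arbitrary bounded holomorphic $f$, not only for functions that extend continuously to the closed bidisc.
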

\begin{proof}
Suppose $B^*$ denotes the adjoint of the bilateral shift 
on $\ell^2(\Z)$ and $f$ is a holomorhic map on $\D^2$ 
given by the power series 
$\sum_{\boldsymbol\alpha \in \mathbb{N}_{0}^{2}}a_{\boldsymbol\alpha}\boldsymbol{z}^{\boldsymbol\alpha}.$ 
The joint spectrum of $I\otimes B^*$ and $B^*\otimes B^*$ is $\T^2.$ 
By spectral theorem we know that spectrum of 
$f(I\otimes B^*,B^*\otimes B^*)$ is same as $f(\T^2).$ 
Therefore, by maximum modulus principle,  
\[\|f\|_{\D^2,\infty}=\|f(I\otimes B^*,B^*\otimes B^*)\|=\|M_{p_0}\otimes I+M_{p_1}\otimes B^*+M_{p_2}\otimes B{^*}^2+\cdots\|,\]
where $p_n(z)=\sum\limits_{\substack{k=0}}^{n}a_{n-k,k}z^{k}$ for each $n\in \N.$
\end{proof}
\begin{rem}\label{OneVariable}
 We state separately the special case of this theorem in one variable, namely, a holomorphic map $f$ defined on the unit disc $\mathbb D,$  maps it into $\mathbb D$ if and only if the multiplication operator $M_f$ maps $L^2(\T)$ to $L^2(\T)$ contractively. \end{rem}

\begin{thm}[Parrott's Theorem, \cite{SP}]
	For $i=1,2$, let $\h_i,\,\mathbb{K}_i$ be Hilbert spaces and 
	$\h=\h_1\oplus \h_2,\,\mathbb{K}=\mathbb{K}_1\oplus \mathbb{K}_2.$ 
	If 
	\[
	\left(
	\begin{smallmatrix}
		A\\
		\\
		C\\
	\end{smallmatrix}
	\right):\h_1\to \mathbb{K} \mbox{ \rm and } \big(\begin{matrix} C &  D \end{matrix}\big  ):\h\to \mathbb{K}_2
	\]
	are contractions, 
	then there exists $X\in\mathcal{B}(\h_2,\mathbb{K}_1)$ 
	such that 
	$\left(
	\begin{smallmatrix}
		A & X\\
		C & D\\	
	\end{smallmatrix}
	\right):\h\to \mathbb{K}
	$ is a contraction.
\end{thm}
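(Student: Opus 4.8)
The plan is to recognize Parrott's completion as the problem of extending the prescribed first column $R:=\left(\begin{smallmatrix}A\\ C\end{smallmatrix}\right)\colon\h_1\to\mathbb K$ to a contractive \emph{row}, and then to read off the required second column $\left(\begin{smallmatrix}X\\ D\end{smallmatrix}\right)$ from a parametrization of all such extensions. The single tool I would use throughout is the classical factorization lemma of Douglas: for bounded operators $E,F$ with the same final space there is a contraction $G$ with $E=FG$ if and only if $EE^*\le FF^*$. Since for the completed operator $T=\left(\begin{smallmatrix}A & X\\ C & D\end{smallmatrix}\right)$ one has $\|T\|\le 1\iff TT^*\le I\iff T^*T\le I$, I would phrase everything through $TT^*\le I$, which is the form that interacts most cleanly with the two hypotheses.

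The first step is to parametrize all contractive rows over $R$. Writing $\Delta:=(I_{\mathbb K}-RR^*)^{1/2}\in\mathcal B(\mathbb K)$ for the defect operator of the (contractive, by hypothesis) column $R$, a row $[\,R\ \ S\,]\colon\h\to\mathbb K$ is a contraction exactly when $RR^*+SS^*\le I_{\mathbb K}$, that is, when $SS^*\le\Delta\Delta^*$. By Douglas this holds if and only if $S=\Delta\Gamma$ for some contraction $\Gamma\colon\h_2\to\mathbb K$. Thus every contractive completion $\left(\begin{smallmatrix}A & X\\ C & D\end{smallmatrix}\right)$ has its second column of the form $\left(\begin{smallmatrix}X\\ D\end{smallmatrix}\right)=\Delta\Gamma$, and conversely any such $\Gamma$ produces one. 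The whole problem is therefore reduced to exhibiting a single contraction $\Gamma$ whose \emph{lower} component equals the prescribed operator $D$.

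The second step realizes that prescribed $D$. Let $P_1,P_2$ be the coordinate projections of $\mathbb K$ onto $\mathbb K_1,\mathbb K_2$, so that $P_1R=A$, $P_2R=C$, and set $N:=P_2\Delta\colon\mathbb K\to\mathbb K_2$. I want a contraction $\Gamma$ with $P_2(\Delta\Gamma)=N\Gamma=D$, which by Douglas exists precisely when $DD^*\le NN^*$. The key computation is
\[
NN^*=P_2\Delta^2P_2^*=P_2(I_{\mathbb K}-RR^*)P_2^*=I_{\mathbb K_2}-CC^*,
\]
using $P_2P_2^*=I_{\mathbb K_2}$ and $P_2R=C$. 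But the second hypothesis, that $(\,C\ \ D\,)\colon\h\to\mathbb K_2$ is a contraction, says exactly $CC^*+DD^*\le I_{\mathbb K_2}$, i.e. $DD^*\le I_{\mathbb K_2}-CC^*=NN^*$. Douglas then furnishes a contraction $\Gamma\colon\h_2\to\mathbb K$ with $D=N\Gamma$. Taking $X:=P_1\Delta\Gamma$, the vector $\left(\begin{smallmatrix}X\\ D\end{smallmatrix}\right)=\Delta\Gamma$ is of the form required in the first step, so $\left(\begin{smallmatrix}A & X\\ C & D\end{smallmatrix}\right)$ is a contraction, as desired.

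The argument hinges on the identity $NN^*=I_{\mathbb K_2}-CC^*$ together with the observation that the row hypothesis is \emph{precisely} the Douglas condition $DD^*\le NN^*$; this is the one nonroutine point, and it is exactly what forces contractivity of the column and of the row to combine. I expect the main obstacle to be purely technical bookkeeping: the defect operators may have non-closed range, so the contractions produced by Douglas are initially defined on the closures of the relevant ranges and extended by zero to the orthogonal complements. This does not disturb any of the contraction bounds, and no difficulty remains once the factorization lemma is in hand.
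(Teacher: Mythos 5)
Your proof is correct, but there is nothing in the paper to match it against: the paper states Parrott's theorem with a citation to \cite{SP} and records, without proof, the formula parametrizing all solutions $X$; it gives no argument of its own. On its own merits, your two-step use of Douglas's factorization lemma is a complete and standard proof of the existence statement. Step 1 (a row $\big(\begin{matrix} R & S\end{matrix}\big)$ with $R=\left(\begin{smallmatrix}A\\ C\end{smallmatrix}\right)$ is contractive iff $SS^*\le \Delta\Delta^*$ with $\Delta=(I-RR^*)^{1/2}$, iff $S=\Delta\Gamma$ for a contraction $\Gamma$) and Step 2 (with $N:=P_2\Delta$ one has $NN^*=P_2(I-RR^*)P_2^*=I_{\mathbb{K}_2}-CC^*$, so the row hypothesis $CC^*+DD^*\le I_{\mathbb{K}_2}$ is exactly the Douglas condition $DD^*\le NN^*$, yielding $\Gamma$ with $P_2\Delta\Gamma=D$) are both correctly executed, and setting $X=P_1\Delta\Gamma$ closes the argument. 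The one thing your proof does not deliver, as written, is the stronger statement the paper actually quotes and later relies on: the description of \emph{all} admissible $X$ in the form $(I-ZZ^*)^{1/2}V(I-Y^*Y)^{1/2}-ZS^*Y$ with $V$ an arbitrary contraction. That full parametrization is used in the example showing $\mathscr{T}(M_{p_1},M_{p_2})$ need not be $3$-polynomially extendible, and in Lemma \ref{Existence of essentially bounded function}, where the choice $V=0$ and the measurability of the entries of $Y$ and $Z$ are essential. Your Step 1 already parametrizes all contractive completions as $S=\Delta\Gamma$; what would remain, to recover the paper's formula, is to parametrize all contractive solutions $\Gamma$ of $N\Gamma=D$ rather than produce a single one via Douglas. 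For the theorem as stated, however, your argument is complete and needs no repair beyond the range-closure caveat you already note.
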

In this theorem, all the possible choices for $X$ 
are of the form
\[(I-ZZ^*)^{1/2}V(I-Y^*Y)^{1/2}-ZS^*Y,\]
where $V$ is an arbitrary contraction and $Y,\,Z$ 
are determined from the formulae:
\[D=(I-CC^*)^{1/2}Y,\,\, A=Z(I-C^*C)^{1/2}.\]

We recall a very useful criterion, due to Douglas, Muhly and Pearcy (\cite [Prop. 2.2]{DMP}), for contractivity.
\begin{prop}[Douglas--Muhly--Pearcy]  \label{DMP - Prop}
For $i=1,2,$ let $T_i$ be a contraction on a Hilbert space $\mathcal{H}_i$ and let $X$ be an operator mapping $\mathcal{H}_2$ into $\mathcal{H}_1$.  A necessary and sufficient condition that the operator on $\mathcal{H}_1 \oplus \mathcal{H}_2$ defined by the matrix $\left(\begin{smallmatrix}T_1 & X\\
0 & T_2\end{smallmatrix}\right)$ be a contraction is that there exist a contraction $C$ mapping $\mathcal{H}_2$ into $\mathcal{H}_1$ such that
$$X=\sqrt{1_{\mathcal{H}_1} - T_1 T_1^*}~C~\sqrt{1_{\mathcal{H}_2} - T_2^* T_2}.$$
\end{prop}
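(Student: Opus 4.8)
The plan is to prove both implications by reducing the contractivity of $L:=\left(\begin{smallmatrix}T_1 & X\\ 0 & T_2\end{smallmatrix}\right)$ to a single operator inequality and then factoring it. Writing a generic vector as $(h_1,h_2)\in\mathcal{H}_1\oplus\mathcal{H}_2$, one has $L(h_1,h_2)=(T_1h_1+Xh_2,\,T_2h_2)$, so $L$ is a contraction precisely when $\|T_1h_1+Xh_2\|^2+\|T_2h_2\|^2\le\|h_1\|^2+\|h_2\|^2$ for all $h_1,h_2$. Using the defect identity $\|h_2\|^2-\|T_2h_2\|^2=\|D_{T_2}h_2\|^2$, where $D_{T_2}:=(1_{\mathcal{H}_2}-T_2^*T_2)^{1/2}$, this is the same as $\|T_1h_1+Xh_2\|^2\le\|h_1\|^2+\|D_{T_2}h_2\|^2$ for all $h_1,h_2$; that is, writing $R=\bigl(\begin{smallmatrix}T_1 & X\end{smallmatrix}\bigr)$ and $S=\left(\begin{smallmatrix}1_{\mathcal{H}_1} & 0\\ 0 & D_{T_2}\end{smallmatrix}\right)$, the condition becomes exactly $R^*R\le S^*S$.

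The main step is then two applications of Douglas' factorization lemma. First, $R^*R\le S^*S$ holds if and only if $R=CS$ for some contraction $C=\bigl(\begin{smallmatrix}C_1 & C_2\end{smallmatrix}\bigr):\mathcal{H}_1\oplus\mathcal{H}_2\to\mathcal{H}_1$; comparing entries in $R=\bigl(\begin{smallmatrix}C_1 & C_2 D_{T_2}\end{smallmatrix}\bigr)$ forces $C_1=T_1$ and $X=C_2D_{T_2}$. Contractivity of the row $\bigl(\begin{smallmatrix}T_1 & C_2\end{smallmatrix}\bigr)$ is equivalent to $C_2C_2^*\le 1_{\mathcal{H}_1}-T_1T_1^*=D_{T_1^*}^{\,2}$, where $D_{T_1^*}:=(1_{\mathcal{H}_1}-T_1T_1^*)^{1/2}$, and a second application of Douglas' lemma shows this is equivalent to $C_2=D_{T_1^*}\,C$ for a contraction $C:\mathcal{H}_2\to\mathcal{H}_1$. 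Substituting gives $X=D_{T_1^*}\,C\,D_{T_2}$, which is the asserted form. For the converse one runs this backwards: given such a $C$, the estimate $C_2C_2^*=D_{T_1^*}CC^*D_{T_1^*}\le D_{T_1^*}^{\,2}$ makes $\bigl(\begin{smallmatrix}T_1 & C_2\end{smallmatrix}\bigr)$ a contraction with $C_2=D_{T_1^*}C$, whence $R=\bigl(\begin{smallmatrix}T_1 & C_2\end{smallmatrix}\bigr)S$ yields $R^*R\le S^*S$ and therefore contractivity of $L$.

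I would flag two places that require care rather than genuine difficulty. The asymmetry of the final formula---the defect of $T_1^*$ on the left but the defect of $T_2$ on the right---must be tracked: it arises because the $T_2$--defect enters through the target operator $S$ while the $T_1$--defect enters only afterwards, through the row completion. One must also keep the adjoints straight when invoking Douglas' lemma, applying it once in the form $R^*R\le S^*S\Leftrightarrow R=CS$ and once in the form $C_2C_2^*\le D_{T_1^*}D_{T_1^*}^*\Leftrightarrow C_2=D_{T_1^*}C$; this is the only subtle bookkeeping in the argument.

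Finally, I would note that the statement is in fact the special case $C=0$ of Parrott's theorem together with its parametrization recorded above: taking $A=T_1$, $D=T_2$ and the shared corner equal to $0$ forces $Z=T_1$ and $Y=T_2$, the cross term $-ZS^*Y$ drops out, and the parametrization collapses to $X=(1-T_1T_1^*)^{1/2}V(1-T_2^*T_2)^{1/2}$ with $V$ an arbitrary contraction---precisely the claimed characterization. This yields a one-line alternative derivation, whereas the Douglas-lemma argument above has the advantage of being self-contained.
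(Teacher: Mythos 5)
Your proof is correct, but note that the paper does not actually prove this proposition: it is recalled as a known criterion with a citation to \cite[Prop.\ 2.2]{DMP}, so there is no in-paper argument to compare against. Your route---reducing contractivity of $L=\left(\begin{smallmatrix}T_1 & X\\ 0 & T_2\end{smallmatrix}\right)$ to the single inequality $R^*R\le S^*S$ with $R=\bigl(\begin{smallmatrix}T_1 & X\end{smallmatrix}\bigr)$ and $S=1_{\mathcal{H}_1}\oplus D_{T_2}$, then applying Douglas' factorization lemma twice---is a clean, self-contained derivation, and both invocations of the lemma are in the correct form (once as $A^*A\le B^*B\Leftrightarrow A=CB$, once as $AA^*\le BB^*\Leftrightarrow A=BC$); the identification $C_1=T_1$ is indeed forced by comparing the first components of $R=CS$, and the converse direction closes the equivalence properly. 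Your closing observation is also apt and worth keeping: since the paper states Parrott's theorem together with the parametrization of all completions immediately before this proposition, the special case of zero lower-left corner (which forces $Z=T_1$, $Y=T_2$ and kills the cross term) yields the proposition in one line within the paper's own framework, and this is plainly how the paper intends the two results to sit together. The one caveat is that the paper's Parrott parametrization is itself quoted without proof (and with a notational slip: the $S$ in $-ZS^*Y$ is the shared corner, i.e.\ the formula should read $-ZC^*Y$), so deriving Douglas--Muhly--Pearcy from it would rest on an unproved statement, whereas your Douglas-lemma argument stands on its own.
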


Let $H^2(\T)$ denote the Hardy space, 
a closed subspace of $L^2(\T)$. 
Let $P_{-}$ denote the orthogonal projection 
of $L^2(\T)$ onto $L^2(\T)\ominus H^2(\T).$

\begin{defn}[Hankel Operator]
	Suppose $\phi$ is an element in $L^{\infty}(\T)$. Then the   
	{\em Hankel operator} $H_{\phi}$ corresponding to the function $\phi$ 
	is the operator $P_{-}\circ M_{\phi}\vert_{H^2(\T)}$. 
\end{defn}

Finally, we recall the well-known theorem due to Nehari relating  the quotient norm to that of the norm of a Hankel operator.  
\begin{thm}[Nehari's Theorem \cite{ZN}]\label{NT1Var}
	Suppose $\phi$ is a function in $L^{\infty}(\T)$ and 
	$H_{\phi}$ is the corresponding Hankel operator. 
	Then
	$\inf\left\{\|\phi-g\|_{\T,\infty}:g\in H^{\infty}(\T)\right\}
	=\|H_{\phi}\|_{op}.$
\end{thm}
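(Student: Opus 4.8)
The plan is to prove the two inequalities $\|H_\phi\|_{op}\le \operatorname{dist}_{L^\infty}(\phi,H^\infty(\T))$ and its reverse separately, where $\operatorname{dist}_{L^\infty}(\phi,H^\infty(\T))=\inf\{\|\phi-g\|_{\T,\infty}:g\in H^\infty(\T)\}$. The first inequality is the routine half. For any $g\in H^\infty(\T)$ the operator $M_g$ leaves $H^2(\T)$ invariant, so $P_-M_g|_{H^2(\T)}=0$ and hence $H_{\phi-g}=H_\phi$. Since a Hankel operator is a compression of a multiplication operator, $\|H_\phi\|_{op}=\|H_{\phi-g}\|_{op}\le\|M_{\phi-g}\|=\|\phi-g\|_{\T,\infty}$; taking the infimum over $g$ gives $\|H_\phi\|_{op}\le\operatorname{dist}_{L^\infty}(\phi,H^\infty(\T))$.

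The substance is the reverse inequality, which I would prove by exhibiting a single $g_0\in H^\infty(\T)$ with $\|\phi-g_0\|_{\T,\infty}\le\|H_\phi\|_{op}$. Write $\gamma=\|H_\phi\|_{op}$, set $P_+=I-P_-$ and $L^2_-:=L^2(\T)\ominus H^2(\T)$. Assume first that the norm of $H_\phi$ is attained at a unit vector $f\in H^2(\T)$; then $f$ is an eigenvector of the positive operator $H_\phi^*H_\phi$ with eigenvalue $\gamma^2$. Put $h:=H_\phi f=P_-(\phi f)\in L^2_-$, so $\|h\|=\gamma$ and, using $H_\phi^* h=P_+(\bar\phi h)$, the eigenvalue relation reads $P_+(\bar\phi h)=\gamma^2 f$. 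Unwinding the two projections yields $u:=\phi f-h\in H^2(\T)$ and $v:=\bar\phi h-\gamma^2 f\in L^2_-$.

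From $u\in H^2(\T)$ I would solve $\phi=(h+u)/f$, substitute into the expression for $v$, and multiply through by $\bar f$ to obtain
\[
|h|^2+\bar u\,h-\gamma^2|f|^2=\bar f\,v .
\]
A Fourier-support bookkeeping shows that $\bar u\,h$ and $\bar f\,v$ have spectrum in $\{n\le -1\}$, so the real-valued function $|h|^2-\gamma^2|f|^2$ has spectrum in $\{n\le -1\}$; being real, its spectrum is symmetric, and hence it vanishes identically. Thus $|h|=\gamma|f|$ almost everywhere. Since a nonzero $H^2(\T)$ function is nonzero a.e., the function $\psi:=h/f$ is well defined with $|\psi|=\gamma$ a.e., so $\psi\in L^\infty(\T)$ with $\|\psi\|_{\T,\infty}=\gamma$, and by construction $\phi-\psi=u/f$.

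It remains to check that $g_0:=\phi-\psi$ lies in $H^\infty(\T)$, for then $\|\phi-g_0\|_{\T,\infty}=\|\psi\|_{\T,\infty}=\gamma$ finishes the proof. I would verify this through inner--outer factorization: writing $f=f_if_o$ and $\bar h=z\,g$ with $g\in H^2(\T)$, the identity $|g|=|h|=\gamma|f|$ forces $g=\gamma\,\omega f_o$ for some inner function $\omega$, and substituting back shows that the bounded function $u/f$ has no negative Fourier coefficients. The main obstacles are precisely these last two points: the passage from the modulus identity to membership in $H^\infty(\T)$, and---more seriously---the a priori assumption that $\|H_\phi\|_{op}$ is attained, which fails when $H_\phi$ is not compact. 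I would remove the attainment hypothesis by a limiting argument, applying the construction to a sequence of symbols (for instance dilations $\phi_r$, $r\uparrow 1$, or finite truncations) for which the norm is attained, and extracting a weak-$*$ convergent subsequence of the resulting symbols $\psi$ via weak-$*$ compactness of the unit ball of $L^\infty(\T)=L^1(\T)^*$, checking that the limit still differs from $\phi$ by an $H^\infty(\T)$ function.
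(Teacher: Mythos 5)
Your easy inequality is correct, and so is a substantial portion of the hard direction: the deduction that a norm-attaining unit vector $f$ satisfies $H_\phi^*H_\phi f=\gamma^2f$, the identifications $u=P_+(\phi f)\in H^2(\T)$ and $v=P_-(\bar\phi h)$, the Fourier-support argument forcing $|h|=\gamma|f|$ a.e., and the conclusion that $\psi=h/f$ is a well-defined function of constant modulus $\gamma$ (this is the standard Schmidt-pair computation from Adamyan--Arov--Krein theory). The fatal gap is exactly the step you flag: showing $\phi-\psi=u/f\in H^\infty(\T)$. Observe that everything you use after the attainment assumption --- the relations $P_-(\phi f)=h$ and $P_+(\bar\phi h)=\gamma^2f$, the factorization $\bar h=zg$, $|g|=\gamma|f|$, $g=\gamma\omega f_o$ --- holds verbatim for \emph{every} Schmidt pair of $H_\phi$, with $\gamma$ replaced by the corresponding singular value $\sigma$; maximality of $\gamma$ is never invoked again. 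But the conclusion $u/f\in H^\infty(\T)$ is provably false for a non-maximal Schmidt pair: if it held for a singular value $\sigma<\|H_\phi\|_{op}$, then $g_0=u/f$ would give $\mathrm{dist}_\infty(\phi,H^\infty(\T))\le\|\psi\|_{\T,\infty}=\sigma<\|H_\phi\|_{op}$, contradicting your own first inequality. Concretely, take $\phi=\bar z+\tfrac12\bar z^2$, whose Hankel matrix is $\left(\begin{smallmatrix}1&1/2\\ 1/2&0\end{smallmatrix}\right)$ with singular values $\tfrac{\sqrt2+1}{2}$ and $\sigma=\tfrac{\sqrt2-1}{2}$. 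For the Schmidt vector $f=(1-\sqrt2)+z$ of the smaller singular value one computes $u=P_+(\phi f)=1$, $h=\lambda_-\bar z^2\big(1+(1-\sqrt2)z\big)$ with $\lambda_-=\tfrac{1-\sqrt2}{2}$, so $|h|=\sigma|f|$ a.e.\ as your argument predicts, yet $u/f=1/f$ has a pole at $z=\sqrt2-1\in\D$ and is not in $H^\infty(\T)$. In this example $g=\lambda_- f$, i.e.\ your inner function $\omega$ is $-f_i$ itself: the common inner factor of $\omega$ and $f_i$ is precisely what blocks the intended conclusion ``$f_i$ divides $u$,'' and nothing in the relations you use rules it out.

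So any repair must bring $\gamma=\|H_\phi\|_{op}$ into play in an essential way, and this is where the circularity danger lies: the standard proof that maximal vectors \emph{do} yield $\phi-\psi\in H^\infty(\T)$ (uniqueness and unimodularity of the best approximation, AAK) presupposes the existence of a best approximant, i.e.\ Nehari's theorem itself. The known non-circular routes are (i) duality: $\mathrm{dist}_\infty(\phi,H^\infty)=\sup\{|\int\phi F\,dm|:F\in zH^1,\ \|F\|_1\le1\}$ by Hahn--Banach, followed by the Riesz factorization $F=zf_1f_2$ with $f_1,f_2\in H^2$, which gives $|\int\phi F|\le\|H_\phi\|_{op}\|F\|_1$; or (ii) a one-step-extension argument with Parrott's theorem, completing the Hankel matrix diagonal-by-diagonal to the matrix of a multiplication operator of the same norm. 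For what it is worth, the paper does not prove Theorem \ref{NT1Var} at all --- it quotes it --- but route (ii) is exactly the mechanism the paper uses for its own two-variable generalization (Lemma \ref{Existence of essentially bounded function} and Theorem \ref{thm 4.6}, where the central Parrott solution $V=0$ is chosen to keep each new diagonal a bounded multiplication operator), and it specializes to a clean proof of the one-variable theorem that avoids both of your obstacles. Your second worry, removing the attainment hypothesis by dilations $\phi_r$ plus weak-$*$ compactness and the weak-$*$ closedness of $H^\infty(\T)$, is by contrast routine and would work once the attained case is secured; the gap is entirely in the attained case.
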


\section{The Carath\'{e}odory--Fej\'{e}r Interpolation Problems}\label{CFP}
There are several different known solutions to 
the CF problem when $n=1,$ see (cf. \cite[Page 179]{Nik}). 
For $n>1,$  see \cite{BLTT}  and \cite[Chapter 3]{BMW}) 
for a comprehensive survey of recent results.  
In this article, we shall obtain necessary condition for the existence of a solution to the CF problem  
for $n\in \mathbb N$ 
and an algorithm to construct a solution if one exists.
\subsection{The planar case}
Although, we state the problem below for polynomials $p$ of degree $2$ with $p(0)=0,$ 
our methods apply to the general case.

\begin{cfpbl}[1,2] \label{planar extension}
	Fix a polynomial $p$ of the form $p(z)=a_1z+a_2z^2.$  
	Find a necessary and sufficient condition 
	for the existence of a holomorphic function $g$ 
	defined on the unit disc $\mathbb D$  
	with $g^{(k)}(0) = 0,\, k=0,1,2,$ 
	such that  $\|p + g \|_{\D,\infty}\leq 1.$ 

\end{cfpbl}
\textbf{Solution:} If this problem has a solution, then using Remark \ref{OneVariable}, it can be deduced that 
\[A_2:=\left(
{
\begin{array}{cc}
	a_1 & a_2\\
	0 & a_1\\
\end{array}}
\right)
\]
is a contraction. 
Thus $\|A_2\|\leq 1$ is a necessary condition. 
On the other hand, assuming that $\|A_2\|\leq 1,$ the  existence of $a_3\in\mathbb{C}$ 
such that 
\[A_3:=\left(
{
\begin{array}{ccc}
	a_1 & a_2 & a_3\\
	0 & a_1 & a_2\\
	0 & 0 & a_1\\
\end{array}}
\right)
\]
has operator norm less than or equal to $1$ follows from Parrott's theorem. 
Repeated use of Parrott's theorem generates a sequence $a_3,a_4,\ldots $ such that $\|M_f\| \leq 1,$ where $f(z) = a_1z + a_2 z^2 + \cdots, $ provided the necessary condition $\|A_2\|\leq 1$ is met. Thus $\|A_2\|\leq 1$ is a necessary and sufficient condition
for the existence of a solution to the {\tt CF problem (1,2)}. 

\subsection{Carath\'{e}odory--Fej\'{e}r interpolation problem in two variables}
In the paper \cite{BLTT}, the Carath\'{e}odory--Fej\'{e}r interpolation problem for the polydisc is treated. 
In the case of two variables, a necessary and sufficient condition for existence of a solution is given, see 
Theorem 5.1 of \cite{BLTT}. Also, a slightly different necessary and sufficient condition, again  
for $\mathbb D^2,$ appears in \cite[Theorem 1]{EPP}.  They discuss separately the case $n = 2$ 
and say that it is special due to the dilation theorem for commuting contractions of Ando. 
Our investigations, giving somewhat different necessary and sufficient conditions, not surprisingly, 
is also special in the case of $n=2.$ We therefore discuss this case first.

\begin{cfpbl}[2,2]\label{two Variables}
Let $p$ in $\mathbb{C}[Z_1,Z_2]$ be a fixed but arbitrary polynomial of the form 
	\[p(z_1,z_2)=
	a_{10}z_1+a_{01}z_2+
	a_{20}z_{1}^{2}+a_{11}z_1z_2+a_{02}z_{2}^2.\]
	Find necessary and sufficient conditions 
	for the existence of a holomorphic function 
	$q$ on $\mathbb{D}^2$ with $\big ( \partial_1^{i_1}  \partial_2^{i_2} q\big ) (\boldsymbol 0) = 0,$ 
	 $i_1 + i_2 \leq 2,$ 
	such that $\|p+q\|_{\D^2,\infty}\leq 1$. 
\end{cfpbl}
The theorem given below follows from Theorem \ref{CaseBidisc} and Proposition \ref{DMP - Prop} (Douglas--Muhly--Pearcy).
As in \eqref{1p1p2}, set $p_1(z) =a_{10} +a_{01}z$ and $p_2(z) = a_{20}+a_{11}z+a_{02}z^2.$
\begin{thm}\label{finalCF}
	If $p$ is any complex valued polynomial in two variables 
	of degree at most $2$ with $p(\boldsymbol 0)=0,$ 
	then $|p_1|^2+|p_2|\leq 1$
	is a necessary condition 
	for the existence of a holomorphic function 
	$q:\mathbb D^2 \to \mathbb C,$ 
	with $q^{(I)}(\boldsymbol 0)=0,$ $|I| \leq 2,$ 
	such that  $\|p+q\|_{\mathbb D^2,\infty} \leq  1.$
\end{thm}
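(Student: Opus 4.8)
The plan is to run exactly the reduction the authors indicate—Theorem~\ref{CaseBidisc} followed by the Douglas--Muhly--Pearcy criterion (Proposition~\ref{DMP - Prop})—and then to isolate the single scalar inequality that the resulting factorization produces. Suppose a solution exists, so that $f:=p+q$ is holomorphic on $\D^2$ with $\|f\|_{\D^2,\infty}\le 1$. Because $q^{(I)}(\boldsymbol 0)=0$ for $|I|\le 2$, the Taylor coefficients of $f$ of total degree at most $2$ coincide with those of $p$; in particular the diagonal polynomials $p_0,p_1,p_2$ attached to $f$ in Theorem~\ref{CaseBidisc} satisfy $p_0\equiv 0$ (since $f(\boldsymbol 0)=0$) and agree with the $p_1,p_2$ defined above. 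Theorem~\ref{CaseBidisc} then asserts that the block upper-triangular Toeplitz operator $T$ with blocks $M_{p_n}$ is a contraction on $\bigoplus_{j}L^2(\T)$.

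Next I would pass to a compression. Selecting the two block-rows and two block-columns that expose $p_1$ and $p_2$—concretely, the rows indexed $\{0,1\}$ against the columns indexed $\{1,2\}$—extracts the block $\left(\begin{smallmatrix} M_{p_1} & M_{p_2} \\ M_{p_0} & M_{p_1}\end{smallmatrix}\right)$, which equals $\left(\begin{smallmatrix} M_{p_1} & M_{p_2} \\ 0 & M_{p_1}\end{smallmatrix}\right)$ precisely because $p_0\equiv 0$. A compression (to possibly different domain and codomain subspaces) of a contraction is again a contraction, so this $2\times 2$ block operator on $L^2(\T)\oplus L^2(\T)$ is a contraction; in passing this also shows $\|M_{p_1}\|=\|p_1\|_{\T,\infty}\le 1$, so that $M_{p_1}$ is itself a contraction and Proposition~\ref{DMP - Prop} applies.

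I would then invoke Proposition~\ref{DMP - Prop} with $T_1=T_2=M_{p_1}$ and $X=M_{p_2}$, obtaining a contraction $C$ with
\[
M_{p_2}=\bigl(I-M_{p_1}M_{p_1}^*\bigr)^{1/2}\,C\,\bigl(I-M_{p_1}^*M_{p_1}\bigr)^{1/2}.
\]
Since $M_{p_1}M_{p_1}^*=M_{p_1}^*M_{p_1}=M_{|p_1|^2}$, the positive factors are the multiplication operators $M_{(1-|p_1|^2)^{1/2}}$, whence $M_{p_2}=M_{(1-|p_1|^2)^{1/2}}\,C\,M_{(1-|p_1|^2)^{1/2}}$. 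Reading this identity of multiplication operators pointwise on $\T$—equivalently, viewing the $2\times2$ block as multiplication by the matrix symbol $\left(\begin{smallmatrix}p_1(z)&p_2(z)\\0&p_1(z)\end{smallmatrix}\right)$, whose norm is the essential supremum of the pointwise matrix norm—forces $|p_2(z)|\le 1-|p_1(z)|^2$ for a.e.\ $z\in\T$, that is, $|p_1|^2+|p_2|\le 1$ on $\T$.

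Finally, since $p_1,p_2$ are polynomials, $|p_1|^2+|p_2|$ is subharmonic on $\D$ and continuous on $\overline{\D}$, so the maximum principle propagates the boundary bound to all of $\D$, yielding the asserted $|p_1(z)|^2+|p_2(z)|\le 1$ for $z\in\D$. The only genuinely delicate points are the bookkeeping that identifies the compressed $2\times2$ block with the prescribed $p_1,p_2$ (which relies on $p_0\equiv 0$ and on the vanishing of the low-order derivatives of $q$) and the passage from the operator factorization to the pointwise scalar inequality; the content-bearing fact, namely that $\left\|\left(\begin{smallmatrix} a & b \\ 0 & a\end{smallmatrix}\right)\right\|\le 1$ is equivalent to $|a|^2+|b|\le 1$, is exactly what Proposition~\ref{DMP - Prop} delivers in one line, since there $b=(1-|a|^2)c$ with $|c|\le 1$.
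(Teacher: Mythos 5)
Your proposal is correct and follows exactly the paper's own route: Theorem~\ref{CaseBidisc} gives contractivity of the block Toeplitz operator, a compression (valid since $p_0\equiv 0$) yields $\left\|\left(\begin{smallmatrix} M_{p_1} & M_{p_2} \\ 0 & M_{p_1}\end{smallmatrix}\right)\right\|\leq 1$, and Proposition~\ref{DMP - Prop} converts this into $|p_1|^2+|p_2|\leq 1$. You have merely filled in details the paper leaves implicit (the choice of block rows and columns, the pointwise reading of the Douglas--Muhly--Pearcy factorization for normal multiplication operators, and the maximum-principle passage from $\T$ to $\D$), all of which are sound.
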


\begin{proof}
Suppose $p$ is a complex valued polynomial in two variables 
of degree at most $2$ such that $p(\boldsymbol 0)=0$ 
and there exists a holomorphic function 
$q:\mathbb D^2 \to \mathbb C,$ 
with $q^{(I)}(\boldsymbol 0)=0,$ $|I| \leq 2,$ 
with $\|p+q\|_{\mathbb D^2,\infty} \leq  1.$ 
Then from Theorem \ref{CaseBidisc}, 
we get  
\[\left\|\left(\begin{array}{cc}
M_{p_1} & M_{p_2}\\
0 & M_{p_1}\\
\end{array}\right)\right\|\leq 1.\]
The contractivity criterion of Proposition \ref{DMP - Prop} then implies that 
$|p_1|^2+|p_2|\leq 1.$
\end{proof}
 
Combining  Theorem \ref{CaseBidisc} and Theorem \ref{finalCF}, we obtain the following theorem, which is  the {\tt CF problem (R)} with $n=2,\,d=2.$ 
\begin{thm}\label{Final form}
	For any polynomial $p$ of the form 
	$$p(\boldsymbol  z) = a_{10} z_1 + a_{01}z_2 + a_{20}z_1^2 + a_{11} z_1z_2 + a_{02} z_2^2,$$ 
	there exists a holomorphic function $q,$ 
	defined on the bidisc $\D^2,$ 
	with $q^{(I)}(\boldsymbol 0)=0$ for $|I|=0,1,2,$ 
	such that 
	$$\|p+q\|_{\mathbb D^2, \infty} \leq 1$$ 
	if and only if
	 	$|p_2|\leq 1-|p_1|^2$ 
    and there exists polynomials $p_k$ of degree less or equal to $k$ such that 
	$f:\mathbb{D}\to \mathcal B(L^2(\mathbb{T})),$ where 
	$$\frac{f^{(k)}(0)}{k!}=M_{p_k}\mbox{ for all }k\geq 0,\, p_0 = 0,$$
	defines a holomorphic function with $\sup_{z\in \mathbb D} \|f(z)\|\leq 1.$  
	\end{thm}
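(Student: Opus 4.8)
The plan is to read this theorem as the amalgamation of Theorem~\ref{CaseBidisc}, which converts the scalar sup-norm on $\D^2$ into the operator norm of a block Toeplitz operator, and Theorem~\ref{finalCF}, which records the necessary inequality $\abs{p_2}\le 1-\abs{p_1}^2$. The bridge I would set up first is a dictionary between a candidate extension $F=p+q$ and the operator-valued symbol $f$. Writing $F(\VZ)=\sum_{\boldsymbol\alpha\in\N_0^2}c_{\boldsymbol\alpha}\VZ^{\boldsymbol\alpha}$ and grouping by total degree, I set $p_k(z)=\sum_{j=0}^{k} c_{k-j,j}z^{j}$, so that the hypothesis $q^{(I)}(\boldsymbol 0)=0$ for $\abs I\le 2$ says exactly that $p_0=0$ and that $p_1,p_2$ coincide with the polynomials of \eqref{1p1p2} read off from $p$, while the $p_k$ with $k\ge 3$ encode the free Taylor coefficients of $q$. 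Under this correspondence the degree-$k$ part of $F$ is recovered from $p_k$, so choosing $q$ is the same as choosing the tail $(p_k)_{k\ge 3}$, and $f(z)=\sum_{k\ge 1}M_{p_k}z^{k}$ is the operator-valued symbol attached to $F$.

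For the forward direction, assuming such a $q$ exists I would invoke Theorem~\ref{CaseBidisc} to conclude that the doubly infinite block Toeplitz operator with symbols $M_{p_0}=0,M_{p_1},M_{p_2},\dots$ is a contraction; identifying this operator with $\sum_k M_{p_k}\otimes B^{*k}$ as in the proof of that theorem and passing to the operator-valued symbol yields $\sup_{z\in\D}\norm{f(z)}\le 1$, which is the second listed condition. The inequality $\abs{p_2}\le 1-\abs{p_1}^2$ is then immediate from Theorem~\ref{finalCF}.

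For the backward direction, only the existence of a contractive $f$ is really needed: the inequality $\abs{p_2}\le 1-\abs{p_1}^2$ is forced by it and is recorded only for explicitness. From the admissible tail $(p_k)_{k\ge 3}$ I would reconstruct $F(\VZ)=\sum_{\boldsymbol\alpha}c_{\boldsymbol\alpha}\VZ^{\boldsymbol\alpha}$ through the dictionary above; the hypothesis $\sup_{z\in\D}\norm{f(z)}\le 1$ translates back to the block Toeplitz operator being a contraction, and Theorem~\ref{CaseBidisc}, read in the reverse direction, produces a holomorphic $F:\D^2\to\D$ with these Taylor coefficients. Setting $q:=F-p$ then annihilates all derivatives of order $\le 2$ at the origin by construction and gives $\norm{p+q}_{\D^2,\infty}\le 1$.

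The one genuine point, rather than bookkeeping, is the identification $\norm{\sum_k M_{p_k}\otimes B^{*k}}=\sup_{z\in\D}\norm{f(z)}$, the operator-valued analogue of Remark~\ref{OneVariable}. Using that the bilateral shift is multiplication by $\zeta$ on $L^2(\T)$, the operator $\sum_k M_{p_k}\otimes B^{*k}$ is unitarily a multiplication operator on $L^2(\T,L^2(\T))$ with $\mathcal B(L^2(\T))$-valued symbol $\zeta\mapsto f(\zeta^{-1})$, whose norm is the essential supremum over $\T$; the maximum-modulus principle for the bounded holomorphic $\mathcal B(L^2(\T))$-valued function $f$ then converts $\sup_{\T}$ into $\sup_{\D}$. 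The care required is only in the convergence of the defining series and in matching the analytic (one-sided) and Laurent (two-sided) pictures, which I expect to be routine once this symbol calculus is in place.
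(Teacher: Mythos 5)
Your proposal is correct and takes essentially the same route as the paper, whose entire proof of Theorem \ref{Final form} is the one-line remark that it follows by combining Theorem \ref{CaseBidisc} with Theorem \ref{finalCF} — exactly the two ingredients you use. You merely make explicit what the paper leaves implicit, namely the degree-grouping dictionary between $p+q$ and the tail $(p_k)_{k\geq 3}$ and the identification $\big\|\sum_k M_{p_k}\otimes B^{*k}\big\|=\sup_{z\in\mathbb D}\|f(z)\|$, both of which are routine in the way you indicate.
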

	Thus the {\tt CF problem (2,2)} 
	has been reduced to a one variable problem 
	except it now involves holomorphic functions 
	taking values in $\mathcal B(L^2(\mathbb{T})).$ 
	To discuss this variant of the CF problem, we first introduce a very useful notation.

	Let $\h$ be a separable Hilbert space. 
Given a set of $n$ operators 
$A_1,\ldots,A_n$ in $\mathcal{B}(\h),$ 
define the operator 
\[\mathscr{T}(A_1,\ldots,A_n):=\left(
{\begin{array}{ccccc}
	A_1 & A_2 & A_3 &\cdots & A_n \\
	0 & A_1 & A_2 & \cdots & A_{n-1}\\ 
	0 & 0 & A_1 & \cdots & A_{n-2}\\
	\vdots & \vdots & \vdots & \ddots & \vdots \\
	0 & 0 & 0 & \cdots & A_1 \\
\end{array} } 
\right),\]
which is in $\mathcal B(\mathbb H\otimes \mathbb C^n).$
\begin{defn}[Completely Polynomially Extendible]
	Suppose $k\in\N$ and $\{p_j\}_{j=1}^{k}$ 
	is a sequence of polynomials 
	with $\deg(p_j)\leq j$ for all $j=1,\ldots,k$. 
	Then the operator $\mathscr{T}(M_{p_1},\ldots,M_{p_k})$ will be called 
	$m-${\emph{polynomially}} {\emph extendible} 
	if $\|\mathscr{T}(M_{p_1},\ldots,M_{p_k})\|\leq 1$ 
	and there exists a sequence of polynomials 
	$\{p_{l}\}_{l=k+1}^{m}$, 
	with $\deg(p_{l})\leq l$,  
	such that $\|\mathscr{T}(M_{p_1},\ldots,M_{p_m})\|
	\leq 1$. 
	Also, the operator $\mathscr{T}(M_{p_1},\ldots,M_{p_k})$ 
	will be called {\emph{completely polynomially extendible}} 
	if $\mathscr{T}(M_{p_1},\ldots,M_{p_k})$ is 
	$m-$polynomially extendible for all $n\in\N$.
\end{defn}

For $p_1,p_2\in\mathbb{C}[Z],$ polynomials of degree 
at most $1$ and $2$ respectively,  
let $P$ denote the polynomial
$P(z)=M_{p_1}z + M_{p_2}z^2.$ 
We shall say that $P$ is a polynomial 
in the CF class if  there is a holomorphic function 
$f:\mathbb{D}\to \mathcal B(L^2(\mathbb{T}))$ 
satisfying properties stated in Theorem \ref{Final form}. 
Such a function $f$ will be called a 
CF-extension of the polynomial $P$. 
It follows that a solution to 
the {\tt CF problem (2,2)}  exists 
if and only if the polynomial $P$ is in the CF class. 
We have therefore proved the following theorem.

\begin{thm} \label{soln in 2 var}
A  solution to the {\tt CF problem (2,2)} exists 
if and only if the corresponding one variable 
operator valued polynomial $P$ is in the CF class, 
or equivalently, the operator 
	$\mathscr{T}(M_{p_1},M_{p_2})$ 
	is completely polynomially extendible.
\end{thm}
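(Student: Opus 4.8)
The statement bundles two equivalences, and the plan is to establish each in turn. The first, that a solution to the {\tt CF problem (2,2)} exists if and only if $P$ lies in the CF class, is essentially a repackaging of Theorem \ref{Final form}: by definition $P$ is in the CF class precisely when there is a holomorphic $f:\D\to\mathcal B(L^2(\T))$ with $f^{(k)}(0)/k!=M_{p_k}$ ($p_k$ polynomial, $\deg p_k\le k$), $p_0=0$, and $\sup_{z\in\D}\|f(z)\|\le 1$, which is the second clause of Theorem \ref{Final form}. It remains only to observe that the first clause, $|p_2|\le 1-|p_1|^2$, is redundant once such an $f$ exists: the $2\times 2$ corner $\mathscr T(M_{p_1},M_{p_2})$ is a compression of the contraction attached to $f$ and hence itself a contraction, so Theorem \ref{finalCF} (equivalently, the Douglas--Muhly--Pearcy criterion, Proposition \ref{DMP - Prop}) forces $|p_2|\le 1-|p_1|^2$. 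Thus the two conditions of Theorem \ref{Final form} collapse to the single condition defining the CF class, and the first equivalence follows.

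For the second equivalence I would first record the bridge between the size of $f$ and the contractivity of truncated block-Toeplitz matrices. Setting $g(z)=f(z)/z=\sum_{k\ge 0}M_{p_{k+1}}z^{k}$, the maximum modulus principle gives $\sup_{z\in\D}\|f(z)\|=\sup_{z\in\D}\|g(z)\|$, and the operator-valued form of Remark \ref{OneVariable}, established exactly as in Theorem \ref{CaseBidisc} through the spectral theorem for the backward shift, identifies $\sup_{z\in\D}\|g(z)\|\le 1$ with the contractivity of the semi-infinite analytic block-Toeplitz operator having $\mathscr T(M_{p_1},M_{p_2},\dots)$ as its matrix. Since each finite truncation $\mathscr T(M_{p_1},\dots,M_{p_m})$ is precisely the top-left $m\times m$ compression of this array, and since the array is upper triangular and therefore carries finitely supported vectors to finitely supported vectors, the infinite operator is a contraction if and only if $\|\mathscr T(M_{p_1},\dots,M_{p_m})\|\le 1$ for every $m$. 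The forward direction of the second equivalence is then immediate: if $P$ is in the CF class, the Taylor coefficients of $f$ supply a single sequence $p_3,p_4,\dots$ for which all truncations are contractive, so $\mathscr T(M_{p_1},M_{p_2})$ is completely polynomially extendible.

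The converse is where the real work lies. Complete polynomial extendibility only guarantees, for each $m$, the existence of polynomials $p_3^{(m)},\dots,p_m^{(m)}$ with $\deg p_\ell^{(m)}\le \ell$ making $\mathscr T(M_{p_1},\dots,M_{p_m^{(m)}})$ contractive; a priori these extensions depend on $m$, whereas a CF-extension demands one coherent sequence valid for all $m$ at once. To consolidate them I would run a diagonal argument. Each entry of a contraction has norm at most $1$, so $\|M_{p_\ell^{(m)}}\|=\|p_\ell^{(m)}\|_{\infty}\le 1$; hence every $p_\ell^{(m)}$ lies in the set of one-variable polynomials of degree $\le\ell$ with sup norm at most $1$, a compact set (closed and bounded in a finite-dimensional space, the coefficients being controlled by Cauchy estimates). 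Passing to successive subsequences I would extract limits $p_\ell^{(m)}\to p_\ell$ for each $\ell$. Because $(A_1,\dots,A_L)\mapsto\|\mathscr T(A_1,\dots,A_L)\|$ is continuous in operator norm and each fixed $L\times L$ truncation is a compression of the contractions available for all large $m$, the limiting sequence satisfies $\|\mathscr T(M_{p_1},\dots,M_{p_L})\|\le 1$ for every $L$. By the bridge above, $f(z)=\sum_{k\ge 1}M_{p_k}z^{k}$ is then holomorphic with $\sup_{z\in\D}\|f(z)\|\le 1$, i.e. a CF-extension, so $P$ is in the CF class.

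The main obstacle, as indicated, is this converse: manufacturing one sequence out of the family of $m$-dependent polynomial extensions. The compactness and diagonal extraction handle it cleanly, but two points must be checked carefully, namely that the limiting polynomials retain the prescribed degree bounds and that the norm inequality survives the passage to the limit; both reduce to the continuity of the truncated-norm functional together with the preservation of degree under coefficientwise limits. I would avoid building the extension by hand via Parrott's theorem here, since a single Parrott step produces only some contractive completing operator and not obviously a multiplication operator $M_{p_m}$ with $p_m$ a polynomial of degree $\le m$; consolidating the already-polynomial extensions supplied by the hypothesis sidesteps that difficulty.
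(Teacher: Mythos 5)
Your proposal is correct, and it supplies a proof that the paper essentially does not give. The paper's derivation of this theorem is a single remark: having proved Theorem \ref{Final form}, it defines the CF class as the set of polynomials $P$ admitting a function $f$ as in that theorem and declares the theorem ``therefore proved''; no argument whatsoever is offered for the second equivalence, namely that membership in the CF class is the same as complete polynomial extendibility of $\mathscr{T}(M_{p_1},M_{p_2})$. Your handling of the first equivalence coincides with the paper's route, with the bonus observation (correct, via Proposition \ref{DMP - Prop}) that the clause $|p_2|\leq 1-|p_1|^2$ in Theorem \ref{Final form} is redundant once $f$ exists. What you do differently is to notice, and then close, the real logical gap: complete polynomial extendibility as defined in the paper yields only $m$-dependent tuples $p_3^{(m)},\ldots,p_m^{(m)}$, whereas a CF-extension requires one coherent sequence; your compactness-and-diagonalization argument (blocks of a contraction are contractions, polynomials of degree at most $\ell$ with sup norm at most $1$ form a compact set, the truncated Toeplitz norm is continuous in the coefficients, and degree bounds survive coefficientwise limits) is exactly what is needed to pass from the former to the latter. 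The same point is silently elided in the paper's algorithm, where non-emptiness of each $\mathcal{C}_k$ is tacitly identified with the existence of an infinite consistent selection, so your argument is genuine added value rather than a restatement. The only repair I would ask for is in your bridge: there is no spectral theorem for the non-normal unilateral backward shift; as in Theorem \ref{CaseBidisc} one applies the spectral theorem to the unitary bilateral shift, obtaining the norm of the bi-infinite Laurent operator with symbol $g(z)=f(z)/z$, and then passes to the semi-infinite analytic compression, whose norm agrees with it for an analytic symbol --- a fact which in any case follows from your own observation that both norms equal $\sup_{m}\|\mathscr{T}(M_{p_1},\ldots,M_{p_m})\|$ by upper-triangularity.
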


\subsection{Algorithm for finding a solution to the CF problem} Now, we have all the tools to produce an algorithm
for finding all the polynomials $P(z)=M_{p_1}z + M_{p_2}z^2,$ which are in the CF class:

\begin{itemize}
\item If $\|\mathscr{T}(M_{p_1},M_{p_2})\|\leq 1,$ that is,  
if $|p_2|\leq 1-|p_1|^2,$ 
then move to the next step, 
otherwise $P$ is not a CF class polynomial.
\item Parrott's theorem gives all possible operators 
$T\in\mathcal{B}(L^2(\mathbb{T}))$ 
such that $\mathscr{T}(M_{p_1},M_{p_2},T)$ is a contraction.
Let $\mathcal{C}_3$ be the set of all operators $T,$ 
which are multiplication by a polynomial of degree at most $3$ 
and $\mathscr{T}(M_{p_1},M_{p_2},T)$ is a contraction.
If $\mathcal{C}_3$ is empty then $P$ is not a CF class polynomial.
\item For each $k>3,$ using Parrott's theorem we can construct $\mathcal{C}_k,$ 
the set of all operators $T,$ 
which are multiplication by a polynomial of degree at most $k$ 
and $\mathscr{T}(M_{p_1},\ldots, M_{p_{k-1}},T)$ is a contraction, 
where $M_{p_j}$ is an element of $\mathcal{C}_j$ for $j=3,\ldots,k-1.$ 
\item If all of the sets $\mathcal{C}_k$ are non-empty, then and only then  $P$ is a CF class polynomial.
\end{itemize}

It is clear, from Theorem \ref{Final form}, 
that $|p_1|^2+|p_2|\leq 1$ is a necessary condition 
for the existence of a solution to 
the {\tt CF problem (2,2)}. 
This condition, via Parrott's theorem,  
is also equivalent to the condition 
$\|\mathscr T(M_{p_1}, M_{p_2})\| \leq 1.$ 
	We now give some instances, 
	where this necessary condition 
	is also sufficient for the existence of 
	a solution to the {\tt CF problem (2,2)}. 
	This amounts to finding conditions for $\mathscr{T}(M_{p_1},M_{p_2})$ 
	to be completely polynomially extendible. 
	
\begin{thm} 
	Let $p_1(z)=\gamma + \delta z$ and 
	$p_2(z)=(\alpha + \beta z)(\gamma + \delta z)$ 
	for some choice of complex numbers 
	$\alpha,~\beta,~\gamma$ and $\delta.$ 
	Assume that 
	$|p_1|^2+|p_2|\leq 1.$ 
	If either $\alpha \beta \gamma \delta =0$ or 
	$\arg(\alpha)-\arg(\beta)=\arg(\gamma)-\arg(\delta),$ 
	then $\mathscr{T}(M_{p_1},M_{p_2})$ is 
	completely polynomially extendible.
\end{thm}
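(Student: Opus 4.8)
The plan is to produce an explicit CF–extension and read off complete polynomial extendibility from Theorem~\ref{soln in 2 var}. By that theorem (together with Theorem~\ref{Final form}) it suffices to construct a holomorphic $f:\mathbb D\to\mathcal B(L^2(\mathbb T))$ with $\sup_{z\in\mathbb D}\|f(z)\|\le1$ whose Taylor coefficients satisfy $f^{(k)}(0)/k!=M_{p_k}$, with $p_1,p_2$ as prescribed and $\deg p_k\le k$. Writing $g(w)=\alpha+\beta w$, the hypothesis says $p_2=g\,p_1$, and $|p_1|^2+|p_2|\le1$ becomes $|p_1(w)|\bigl(|p_1(w)|+|g(w)|\bigr)\le1$ for all $w\in\mathbb T$. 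I would look for $f$ inside the scalar–parametrized family
\[
f(z)=M_{p_1}\,z\,\Theta(z\,M_g)=\sum_{m\ge0}\theta_m\,M_{p_1 g^m}\,z^{m+1},\qquad \Theta(\zeta)=\sum_{m\ge0}\theta_m\zeta^m,\ \ \theta_0=\theta_1=1.
\]
Here $p_1g^m$ has degree $m+1$, so the degree constraints hold automatically, and the first two coefficients are $M_{p_1}$ and $M_{p_1g}=M_{p_2}$, as needed. Thus everything is reduced to choosing one scalar function $\Theta$.

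Since each $f(z)$ is a multiplication operator, $\|f(z)\|=\sup_{w\in\mathbb T}\bigl|z\,p_1(w)\,\Theta(zg(w))\bigr|$. Putting $\Xi(\zeta)=\zeta\Theta(\zeta)$ and using that $z\mapsto\Xi(zg(w))$ is holomorphic on $\mathbb D$, the maximum modulus principle shows that to force $\|f\|\le1$ it suffices to arrange
\[
|p_1(w)|\cdot\max_{|\zeta|=|g(w)|}|\Theta(\zeta)|\le1\qquad\text{for every }w\in\mathbb T.
\]
Because $r\mapsto\max_{|\zeta|=r}|\Theta(\zeta)|$ is nondecreasing, and $|g(w)|\le|\alpha|+|\beta|=:r_{\max}$ while $|p_1(w)|\le|\gamma|+|\delta|=:x$, this entire family of inequalities is implied by the single inequality $x\cdot\max_{|\zeta|=r_{\max}}|\Theta(\zeta)|\le1$.

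After the rescaling $\zeta=r_{\max}\eta$, producing such a $\Theta$ is exactly the one–variable Carath\'eodory--Fej\'er problem of finding $\widetilde\Theta\in H^\infty(\mathbb D)$ with $\widetilde\Theta(0)=1$ and $\widetilde\Theta{}'(0)=r_{\max}$ of least supremum norm. By the planar solution, equivalently by Nehari's Theorem~\ref{NT1Var}, this least norm equals $\bigl\|\mathscr T(1,r_{\max})\bigr\|=\tfrac12\bigl(r_{\max}+\sqrt{r_{\max}^2+4}\bigr)$, attained by a function analytic on a disc of radius exceeding $r_{\max}$ (so that $\Theta$ is analytic where it is used). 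Hence a suitable $\Theta$ exists precisely when $x\cdot\tfrac12\bigl(r_{\max}+\sqrt{r_{\max}^2+4}\bigr)\le1$, and a direct computation shows this is equivalent to $x^2+x\,r_{\max}\le1$, i.e. to $(|\gamma|+|\delta|)^2+(|\gamma|+|\delta|)(|\alpha|+|\beta|)\le1$.

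The one step where the extra assumption is genuinely needed is deducing this last inequality from the pointwise hypothesis, and this is the main obstacle. If $\alpha\beta\gamma\delta=0$, then one of $p_1,g$ is a monomial, so one of $|p_1|,|g|$ is constant on $\mathbb T$; if instead $\arg\alpha-\arg\beta=\arg\gamma-\arg\delta=:\phi$, then $|p_1(w)|^2$ and $|g(w)|^2$ are both affine, increasing functions of $\cos(\phi-\arg w)$. In either case $|p_1|$ and $|g|$ attain their maxima $x$ and $r_{\max}$ at a common point $w_0\in\mathbb T$, so evaluating the hypothesis at $w_0$ yields exactly $x^2+x\,r_{\max}\le1$. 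This is the heart of the matter: the pointwise bound $|p_1|^2+|p_2|\le1$ is in general strictly weaker than $x^2+x\,r_{\max}\le1$, and the two agree precisely because the hypotheses force $|p_1|$ and $|g|$ to be comonotone, which collapses the continuum of constraints above to the single binding one at $w_0$. (This also explains why the necessary condition fails to be sufficient in general: without comonotonicity the binding $w$ is interior and no single $\Theta$ need work.) Once $w_0$ is identified, the required $\Theta$ exists, the displayed ansatz is a CF–extension, and therefore $\mathscr T(M_{p_1},M_{p_2})$ is completely polynomially extendible.
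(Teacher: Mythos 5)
Your proof is correct, and it reaches the conclusion by a genuinely different route than the paper. The paper proceeds by cases: for $\beta=0$ it takes $f=M_{p_1}\tilde{p}$ with $\tilde{p}$ a scalar Nehari extension of $z+\alpha z^2$; for $\alpha=0$ and for $\alpha\beta\neq 0$ it passes to two variables, splitting $1+\alpha z_1+\beta z_2=(a+\alpha z_1)+(1-a+\beta z_2)$ with $a=|\alpha|/(|\alpha|+|\beta|)$, extending each summand separately by the one-variable Carath\'eodory--Fej\'er solution, substituting $(z_1,z_2)=(z,M_z)$, and invoking the von Neumann inequality. You instead use one ansatz for all cases, $f(z)=M_{p_1}\,z\,\Theta(zM_g)$ with $g=\alpha+\beta w$; since $M_g$ is a normal multiplication operator, the norm of $f(z)$ is computed exactly as a pointwise supremum, so no dilation or von Neumann argument is needed, and the whole problem collapses to a single scalar CF problem with data $(1,r_{\max})$, $r_{\max}=|\alpha|+|\beta|$. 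Both arguments consume the same one-variable input (Nehari's Theorem~\ref{NT1Var}, giving minimal norm $\|\mathscr{T}(1,r_{\max})\|=\tfrac{1}{2}\bigl(r_{\max}+\sqrt{r_{\max}^2+4}\bigr)$), and both use the side hypotheses at exactly one place and in the same way: to pass from the pointwise condition $|p_1|^2+|p_2|\leq 1$ to the aligned inequality $(|\gamma|+|\delta|)^2+(|\gamma|+|\delta|)(|\alpha|+|\beta|)\leq 1$; this is the step the paper disposes of with ``our hypothesis clearly implies $\|p_2\|_{\infty}+\|p_1\|_{\infty}^2\leq 1$,'' which your common-maximizer observation proves explicitly. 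What your route buys: a uniform construction with no case analysis, a clean isolation of where and why the extra hypotheses are needed, and an explanation of why the necessary condition fails to be sufficient in general. What the paper's route buys: in its cases $\beta=0$ and $\alpha=0$ the construction is tight --- the extension has norm exactly $\|\mathscr{T}(M_{p_1},M_{p_2})\|$ --- and the splitting-plus-von-Neumann device may adapt to factors $g$ of higher degree, where your functional-calculus ansatz would produce coefficients of too-high polynomial degree. Two small points to polish in your write-up: handle the degenerate case $\alpha=\beta=0$ (the rescaling $\zeta=r_{\max}\eta$ collapses, but then $p_2\equiv 0$ and $f(z)=M_{p_1}z$ serves), and note that analyticity of $\Theta$ beyond radius $r_{\max}$ is not actually needed, since $|zg(w)|<r_{\max}$ strictly for $z\in\mathbb{D}$, so a sup bound of $1/x$ on the open disc of radius $r_{\max}$ already gives $\|f\|\leq 1$.
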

\begin{proof} 
All through this proof, for brevity of notation, 
for any holomorphic function  $f:\mathbb D \to \mathcal B(L^2(\mathbb T)),$  
we will let $\|f\|$ denote the norm $\sup\{\|f(z)\|_{\rm op}:z\in\mathbb D\}.$  

\vspace*{0.2cm}

\textbf{Case 1:} Suppose the coefficient $\beta$ is $0$. 
Then the polynomial $P$ is of the form 
$$P(z)=M_{p_1}(z + \alpha z^2 ).$$
Define a polynomial $p$ in one variable by the rule $p(z)=z + \alpha z^2$. 
Using Nehari's theorem, we extend the polynomial $p$ to the holomorphic function 
$\tilde{p}(z)=z + \alpha z^2 + \alpha_3 z^3 +\cdots$ 
with the property that 
$\|\tilde{p}\|_{\D,\infty}=\left\|\mathscr{T}(1,\alpha)\right\|.$
Define a function $f:\mathbb D \to \mathcal B(L^2(\mathbb T))$ as the following: 
\[f(z)=M_{p_1}\tilde{p}(z)=M_{p_1}z + M_{p_2}z^2 + M_{p_3}z^3 +\cdots, \]
where $p_k$ is the polynomial $\alpha_k p_1$. 
Also, the norm of the holomorphic map $f$ over the unit disc $\mathbb{D}$ is 
\[\|f\|=\sup_{z\in\D}\|M_{p_1}\tilde{p}(z)\|
=\|M_{p_1}\|\sup_{z\in\D}|\tilde{p}(z)|=\|M_{p_1}\| \|\mathscr{T}(1,\alpha)\|.\]
Thus 
$\|f\|=\|M_{p_1}\otimes \mathscr{T}(1,\alpha)\|=\|\mathscr{T}(M_{p_1},M_{p_2})\|\leq 1.$
Hence, the map $f$ is a required CF-extension of the given polynomial $P$.

\vspace*{0.2cm}

\textbf{Case 2:} Suppose the coefficient  $\alpha$ is $0$. 
Then the polynomial $P$ is of the form  
$$P(z)=M_{p_1}(z + \beta M_z z^2).$$
Define a operator valued function $Q$ on the unit disc $\mathbb{D}$ as 
$Q(z)=z + \beta M_z z^2$ and define a polynomial $r$ on the bidisc $\mathbb{D}^2$ as  
$r(z_1,z_2)=z_1(1 + \beta z_2)$. 
Let $s(z_2)=1+\beta z_2$. 
Suppose
$$\tilde{s}(z_2)=s(z_2) + \beta_2 z^{2}_{2} + \beta_3z_{2}^{3} + \cdots$$
be such that 
$\|\tilde{s}\|_{\D,\infty}=\|\mathscr{T}(1,\beta)\|$. 
If $\tilde{r}:=z_1\tilde{s}(z_2),$ then
$\|\tilde{r}\|=\|\tilde{s}\|=\|\mathscr{T}(1,\beta)\|.$
If 
$$\tilde{Q}(z)=z + M_{\beta z}z^2 + M_{\beta_2 z^2}z^2 + 
\cdots$$ 
and 
$f(z)=M_{p_1}\tilde{Q}(z),$
then
$\|f\|=\|M_{p_1}\tilde{Q}\|\leq \|M_{p_1}\|\|\tilde{Q}\|.$ 
Since $\tilde{s}(M_z)=\tilde{Q}(z)$, 
from the von Neumann inequality, it follows that 
$\|\tilde{Q}\|\leq \|\tilde{s}\|.$ 
Therefore,
$\|f\|\leq \|M_{p_1}\|\|\mathscr{T}(1,\beta)\|
=\|\mathscr{T}(M_{p_1},\beta M_{p_1})\|.$
Hence, we have 
\[\|f\|
\leq 
\left\|\left(
\begin{array}{cc}
	M_z & 0\\
	0 & I\\
\end{array}
\right)
\left(
\begin{array}{cc}
	M_{p_1} & \beta M_{p_1}\\
	0 & M_{p_1}\\
\end{array}
\right)
\left(
\begin{array}{cc}
	M_{z}^{*} & 0\\
	0 & I\\
\end{array}
\right)\right\|
=
\|\mathscr{T}(M_{p_1},M_{p_2})\|
\leq 1. \]
Therefore the function $f$ is a CF-extension of the given polynomial $P.$

\vspace*{0.2cm}

\textbf{Case 3:} Suppose the coefficients $\alpha$ is not $0$ and  
$\beta$ is $0$. 
Then $P(z)=M_{p_1}\left(z + M_{\alpha + \beta z}z^2\right)$. 
Let $Q(z):=z + M_{\alpha + \beta z}z^2$. 
Define
$r(z_1,z_2):=z_1 + \alpha z^{2}_{1} + \beta z_1z_2=
z_1\left(1 + \alpha z_1 + \beta z_2\right).$
Let $\lambda:=|\alpha|/|\beta|$ and 
$a:=\lambda/(1+\lambda)$. 
Define a polynomial $s$ in two variables by the rule 
$$s(z_1,z_2):=1 + \alpha z_1 + \beta z_2=
\left(a + \alpha z_1\right) + \left(1-a + \beta z_2\right).$$
If $h_1(z_1):= a + \alpha z_1$ and 
$h_2(z_2):= 1-a + \beta z_2,$ then 
there exist
$\tilde{h}_1=a + \alpha z_1 + \alpha_2 z_{1}^{2} +\cdots$ 
and $\tilde{h}_2=1-a + \beta z_2 + \beta_2 z_{2}^{2} +\cdots$
with
$\big\|\tilde{h}_1\big\|=\big \|\mathscr{T}(a,\alpha)\big \| 
\mbox{ and } 
\big\|\tilde{h}_2\big\|=\left \|\mathscr{T}(1-a,\beta)\right\|.$
If
\[\tilde{r}(z_1,z_2):=z_1\left(\tilde{h}_1(z_1) + \tilde{h}_2(z_2)\right)=
z_1 + \alpha z_{1}^{2} + \beta z_1z_2 + 
\alpha_2 z_{1}^{3} + \beta_2z_1z_{2}^{2} +\cdots,\]
then $\|\tilde{r}\|\leq \|\tilde{h}_1\| + \|\tilde{h}_2\|$. 
Define a operator valued holomorphic map $Q$ as the following: 
$$\tilde{Q}(z)=Iz + M_{\alpha + \beta z}z^2 + M_{\alpha_2 + \beta_2 z^2}z^3 +\cdots$$
and 
$f(z)=M_{p_1}\tilde{Q}(z)$ $=\sum_{j}M_{p_j}z^j,$
where 
$p_{k+1}(z)=\big(\alpha_k + \beta_kz^k\big)p_1$ 
for all $k>1$. 
Thus, we see that $\|f\|\leq \|M_{p_1}\|\|\tilde{Q}\|.$ 
Since $\tilde{Q}(z)=\tilde{r}(z,M_z),$ 
from the von Neumann inequality, 
it follows that 
\[\|f\|\leq \|M_{p_1}\|\|\tilde{r}\|\leq \|M_{p_1}\|
\left(\|\tilde{h}_1\| + \|\tilde{h}_2\|\right).\]
As $\mathscr{T}(a,|\alpha|)=\lambda \mathscr{T}(1-a,|\beta|)$, 
therefore
$\left(\|\tilde{h}_1\| + \|\tilde{h}_2\|\right)=
\|\mathscr{T}\left(1,|\alpha|+|\beta|\right)\|$
and hence
\[\|f\|\leq \|M_{p_1}\|\|\mathscr{T}\left(1,|\alpha|+|\beta|\right)\|=
\left\|\mathscr{T}\left(\|p_1\|,(|\alpha|+|\beta|)\|p_1\|\right)\right\|.\]
\textbf{subcase 1:} 
Suppose $\gamma\neq 0$, $\delta \neq 0$ 
and 
$\mbox{ arg }(\alpha) - \mbox{ arg }(\beta) = \mbox{ arg }(\gamma) - \mbox{ arg }(\delta)$. 
Then
\[(|\alpha|+|\beta|)\|p_1\|= \|(\alpha + \beta)p_1\|=\|p_2\|.\]
Our hypothesis clearly implies that
$\|p_2\|_\infty + \|p_1\|_\infty^2 \leq 1.$
Hence the norm of the holomorphic map $f$ on the unit disc $\mathbb{D}$ is at most $1$.\\
\textbf{subcase 2:} 
Suppose the quantities $\gamma$ and $\delta$ are both equal to $0$. 
Then we have 
\[(|\alpha|+|\beta|)\|p_1\|= \|(\alpha + \beta)p_1\|=\|p_2\|.\]
As in subcase 1, 
here also $\|f\|\leq 1$ can be inferred easily. 
\end{proof}
\begin{rem}
In the {\tt CF problem (2,2)}, 
if $p_1\equiv 0$ or $p_2\equiv 0,$ 
and $\|\mathscr T(M_{p_1}, M_{p_2})\| \leq 1,$ 
then $\|P\|\leq 1$ and hence $f$ in Theorem \ref{Final form} 
can be taken to be $P$ itself. 	
\end{rem}

Having verified that the necessary condition 
$\|\mathscr{T}(M_{p_1},M_{p_2})\|\leq 1$ is also  sufficient 
for $P$ to be in the CF class in several cases, 
we expected it to be sufficient in general.  
But unfortunately this is not the case. 
We give an example of a polynomial $P$ 
for which $\|\mathscr{T}(M_{p_1},M_{p_2})\|\leq 1$ 
but $P$ is not in the CF class. 

\textbf{Example:} Let $p_1(z)=1/\sqrt{2}$ and 
$p_2(z)=z^2/2$. 
We show that $\mathscr{T}(M_{p_1},M_{p_2})$ 
is not even $3-$polynomially extendible.

It can be seen that 
$\|\mathscr{T}(M_{p_1},M_{p_2})\|\leq 1$. 
Now suppose there exists a polynomial 
$p_3$ of degree at most $3$ such that  
$\|\mathscr{T}(M_{p_1},M_{p_2},M_{p_3})\|\leq 1.$ 
Then Parrott's theorem 
guarantees the existence of a contraction 
$V\in\mathcal{B}\big(L^2(\T)\big)$ 
such that
\[M_{p_3}=\left(I-M_{|p_1|^2}-M_{p_2}\left(I-M_{|p_1|^2}\right)^{-1}M_{p_2}^{*}\right)V-
M_{p_2}\left(I-M_{|p_1|^2}\right)^{-\frac{1}{2}}M_{p_1}^{*}\left(I-M_{|p_1|^2}\right)^{-\frac{1}{2}}M_{p_2}.\]
As we have $(1-|p_1|^2)^{2}-|p_2|^2\equiv 0,$ 
therefore
\[p_3=\frac{-p_{2}^{2}\overline{p_1}}{1-|p_1|^2}=-\frac{z^4}{2\sqrt{2}}.\]
Thus $p_3$ is a polynomial of degree more than $3,$ 
which is a contradiction. 
Hence $\mathscr{T}(M_{p_1},M_{p_2})$ is not even 
$3-$polynomially extendible.

We close this subsection with an open question: 
Find an explicit strengthening of the inequality 
$\|\mathscr{T}(M_{p_1},M_{p_2})\|\leq 1$ to ensure 
that $P$ is in the CF class?

\subsection{The case of n variables}
First, we  obtain an explicit necessary condition for 
the existence of a solution to the CF problem in the 
case of $n$ - variables, $n\in\mathbb N.$ 
The computations, in this case, are analogous 
to those in the case of two variables but they are  
somewhat cumbersome. Nevertheless, we provide the details. 
Also, an algorithm to determine the set of 
all CF class polynomials in $n$ - variables analogous to that in the case of two variables is given. 

We state below, for a given polynomial in $n$ - variables of  degree $d,$  the \CF problem on the polydisc $\mathbb D^n.$ 

\begin{cfpbl}[$n,d$]
	Fix a polynomial $p\in\mathbb{C}[Z_1,\ldots,Z_n]$ of degree $d$ of the form    
\begin{equation}\label{CFpol}	
	p(z_1,\ldots,z_n)=\sum_{j=1}^{n} a_{e_j}z_j+\cdots+
	\sum_{i_1,\ldots,i_d=1}^{n}a_{e_{i_1}+\cdots+e_{i_d}}z_{i_1}\cdots z_{i_d},
	\end{equation}
where $e_j$ is the row vector of length $n$ 
which has 1 at the $j^{th}$ position and $0$ elsewhere
($e_0$ denotes the zero vector).
	Find necessary and sufficient conditions 
	for the existence of a holomorphic function 
	$q$ defined on the polydisc $\mathbb D^n$ 
	with $q^{(I)}(\boldsymbol 0)=0$ for all $|I|\leq d$
	such that
	$\|p+q\|_{\D^n,\infty}\leq 1.$
\end{cfpbl}

Let $f$ be an analytic function on $\mathbb{D}^n,$ which is represented by the power series
\[f(z_1,\ldots,z_n)=\sum_{k=1}^{\infty}\sum\limits_{\substack{{i_1,\ldots,i_k=1}}}^{n}a_{e_{i_1}+\cdots +e_{i_k}}z_{i_1}\cdots z_{i_k}.\] 
Replace $z_j$ by the operator $I^{\otimes (n-j)} \otimes B^{*^{\otimes j}}$ in the power series representation of $f,$ then 
\[f(I^{\otimes (n-1)} \otimes B^{*},\ldots, B^{*^{\otimes n}} )=\sum_{k=1}^{\infty}A_k\otimes B^{*^{\otimes k}},\]
where 
\begin{eqnarray}\label{AkInBilateral}
A_k=\sum\limits_{\substack{{i_1,\ldots,i_k=1}}}^{n}a_{e_{i_1}+\cdots +e_{i_k}}
\prod_{p=1}^{k} (I^{\otimes (n-i_p)} \otimes B^{*^{\otimes (i_p-1)}}).
\end{eqnarray}
In what follows, it would be convenient to replace the operator $A_k$ by the  multiplication operator on $L^2(\mathbb{T}^{n-1}),$ namely, 
\begin{eqnarray}\label{MultiDimension}
M_{p_k}: L^2(\mathbb{T}^{n-1}) \to L^2(\mathbb{T}^{n-1}),\,\, p_k= \sum\limits_{\substack{{i_1,\ldots,i_k=1}}}^{n}a_{e_{i_1}+\cdots +e_{i_k}}
\prod_{p=1}^{k} {z_{n-i_p+1}z_{n-i_p+2}\cdots z_{n-1}}
\end{eqnarray}
with the understanding that if $i_p=1$ then the monomial ${z_{n-i_p+1}z_{n-i_p+2}\cdots z_{n-1}}$ 
is the constant function $1$. Evidently, $A_k$ is unitarily equivalent to $M_{p_k},$ and therefore this makes no difference. 

Note that the polynomial $p_k$ is from the space $\mathscr{M}^{(\ell)}_{n-1},$ with $\ell=\sum_{p=1}^{k}(i_p-1).$   
Since the maximum value of $\sum_{p=1}^{k}(i_p-1)$ 
is at most $(n-1)k,$ therefore the polynomial $p_k$ is of degree at most $(n-1)k.$ 
\begin{prop}\label{spectralthm}
The function $f$ maps $\mathbb{D}^n$ into $\mathbb{D}$ if and only if 
\[\mathscr T(M{p_1}, M{p_2}, \ldots ) = \left(\begin{array}{ccccc}
 & \vdots & \vdots & \vdots & \\
\cdots & M{p_1} & M{p_2} & M{p_2} & \cdots\\
\cdots & 0 & M{p_1} & M{p_2} & \cdots \\
\cdots & 0 & 0 & M{p_1} & \cdots \\
 & \vdots & \vdots & \vdots & \\
\end{array}\right)\]
is a contraction, where $M_{p_k}$ are as defined in \eqref{MultiDimension}.
\end{prop}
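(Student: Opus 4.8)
The plan is to imitate the proof of Theorem \ref{CaseBidisc} almost verbatim, replacing the two commuting unitaries $I\otimes B^*$ and $B^*\otimes B^*$ by the $n$ commuting unitaries $T_j := I^{\otimes(n-j)}\otimes B^{*\otimes j}$, $j=1,\ldots,n$, acting on $\ell^2(\Z)^{\otimes n}$, where $B$ is the bilateral shift. The strategy is: (i) compute the joint spectrum of the commuting normal tuple $(T_1,\ldots,T_n)$ and show it equals $\T^n$; (ii) invoke the spectral theorem to identify $\|f(T_1,\ldots,T_n)\|$ with $\sup_{\T^n}|f|$; (iii) use the maximum modulus principle to identify the latter with $\|f\|_{\D^n,\infty}$; and (iv) recognize $f(T_1,\ldots,T_n)$, after grouping by the last tensor slot, as the block Toeplitz operator $\mathscr T(M_{p_1},M_{p_2},\ldots)$ up to a unitary, which is exactly the preparatory computation \eqref{AkInBilateral}--\eqref{MultiDimension} carried out just before the statement.

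For step (i), write $U_i$ for the unitary that is $B^*$ in the $i$-th tensor slot and the identity elsewhere; the $U_i$ commute and, acting on independent factors each with spectrum $\T$, have joint spectrum $\T^n$. Since $T_j = U_{n-j+1}U_{n-j+2}\cdots U_n$, the tuple $(T_1,\ldots,T_n)$ is the image of $(U_1,\ldots,U_n)$ under the monomial map $\Phi(\w)=(\w_n,\,\w_{n-1}\w_n,\,\ldots,\,\w_1\cdots\w_n)$. The map $\Phi$ is a self-homeomorphism of $\T^n$ (its inverse is given by $\w_n=\Phi_1$ and $\w_{n-j+1}=\Phi_j\ov{\Phi_{j-1}}$ for $j\ge 2$), so by spectral mapping for commuting normal tuples the joint spectrum of $(T_1,\ldots,T_n)$ is $\Phi(\T^n)=\T^n$. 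I would verify this bijectivity first, as it is the one genuinely new computation compared with the bidisc case.

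For steps (ii)--(iii), the operator $f(T_1,\ldots,T_n)$ lies in the commutative von Neumann algebra generated by the $T_j$ and is therefore normal, so its norm equals its spectral radius, which by the spectral theorem is $\sup\{|\lambda|:\lambda\in\sigma(f(T_1,\ldots,T_n))\}=\sup_{\w\in\T^n}|f(\w)|$; the maximum modulus principle on the polydisc then gives $\sup_{\T^n}|f|=\sup_{\VZ\in\D^n}|f(\VZ)|=\|f\|_{\D^n,\infty}$. The hard part will be justifying the functional calculus itself, since $f$ is only holomorphic on the open polydisc while each $\sigma(T_j)=\T$ sits on the boundary; as in Theorem \ref{CaseBidisc} this point is glossed over, but I would make it precise by applying the argument to the dilated symbols $f_r(\VZ):=f(r\VZ)$, which extend holomorphically across $\ov{\D^n}$ so that $f_r(T_1,\ldots,T_n)$ is unambiguously defined, and then letting $r\uparrow 1$, using $\|f\|_{\D^n,\infty}=\sup_{r<1}\|f_r\|_{\D^n,\infty}$ together with the corresponding monotone limit for the associated block Toeplitz operators.

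Finally, for step (iv), the preamble already records that substituting $z_j\mapsto T_j$ and collecting the $k$-th homogeneous part gives $f(T_1,\ldots,T_n)=\sum_{k\geq 1}A_k\otimes(B^*)^k$, where $A_k$ is the operator in \eqref{AkInBilateral}, unitarily equivalent to $M_{p_k}$ of \eqref{MultiDimension} under $\ell^2(\Z)^{\otimes(n-1)}\cong L^2(\T^{n-1})$. Reading off the matrix of $\sum_{k\geq 1}M_{p_k}\otimes(B^*)^{k-1}$ in the standard basis of the last factor $\ell^2(\Z)$ exhibits precisely the doubly-infinite upper-triangular block Toeplitz matrix $\mathscr T(M_{p_1},M_{p_2},\ldots)$, and one checks directly that $f(T_1,\ldots,T_n)=\mathscr T(M_{p_1},M_{p_2},\ldots)\,(I\otimes B^*)$; since $I\otimes B^*$ is unitary the two operators have equal norm. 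Hence $\|f\|_{\D^n,\infty}=\|f(T_1,\ldots,T_n)\|=\|\mathscr T(M_{p_1},M_{p_2},\ldots)\|$, and $f$ maps $\D^n$ into $\D$ if and only if this common value is at most $1$, that is, if and only if $\mathscr T(M_{p_1},M_{p_2},\ldots)$ is a contraction.
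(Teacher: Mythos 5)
Your proposal is correct and follows essentially the same route as the paper: the paper's entire proof is the one-line instruction to apply the spectral-theorem functional calculus to the commuting unitary tuple $(I^{\otimes(n-1)}\otimes B^{*},\ldots,B^{*\otimes n})$, combined with the preparatory identification \eqref{AkInBilateral}--\eqref{MultiDimension}, which is exactly your outline. Your extra details (the monomial homeomorphism showing the joint spectrum is $\T^n$, the radial-dilation argument handling boundary values, and the unitary factor $I\otimes B^{*}$ reconciling $\sum_{k}M_{p_k}\otimes B^{*k}$ with the displayed block Toeplitz matrix) simply make explicit what the paper leaves implicit.
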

\begin{proof}
Apply the spectral theorem, in the form of functional calculus, for a commuting tuples of normal operators  to the $n$ - tuple $(I^{\otimes (n-1)} \otimes 
B^{*}, \ldots ,  B^{*^{\otimes n}}).$ 
\end{proof}
First, a necessary condition for the existence of a solution 
to the {\tt CF problem (n,d)} is now evident.  
\begin{thm}\label{main}
A solution to  the {\tt CF problem (n,d)} exists only if  	 
	$\mathscr{T}(M_{p_1},\ldots , M_{p_d})$ 
	is a contraction, where the operators $M_{p_1}, \ldots , M_{p_d}$  are defined in \eqref{MultiDimension}.
\end{thm}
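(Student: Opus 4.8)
The plan is to read the necessary condition off directly from Proposition \ref{spectralthm}, by observing that the finite block matrix $\mathscr{T}(M_{p_1},\ldots,M_{p_d})$ occurs as a compression of the infinite block Toeplitz operator attached to any putative solution. There is essentially no new computation to do: the spectral-theorem identity already packaged in Proposition \ref{spectralthm} does the work, and the remaining content is a compression argument.

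First I would fix a solution, that is, a holomorphic $q$ on $\mathbb{D}^n$ with $q^{(I)}(\boldsymbol 0)=0$ for all $|I|\le d$ and $\|p+q\|_{\mathbb{D}^n,\infty}\le 1$, and set $f:=p+q$. Because $q$ vanishes to order $d$ at the origin, the homogeneous parts of $f$ of degrees $1,\ldots,d$ coincide with those of $p$; consequently the multiplication operators $M_{p_1},\ldots,M_{p_d}$ produced from the Taylor data of $f$ by the recipe \eqref{MultiDimension} are exactly the operators attached to $p$ in the statement. Since $\|f\|_{\mathbb{D}^n,\infty}\le 1$, Proposition \ref{spectralthm} applies and yields that the full bi-infinite block Toeplitz operator $\mathscr{T}(M_{p_1},M_{p_2},\ldots)$ is a contraction.

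It then remains to recognize $\mathscr{T}(M_{p_1},\ldots,M_{p_d})$ as a compression of this contraction. The bi-infinite operator carries $M_{p_1}$ along its main diagonal and $M_{p_k}$ on its $(k-1)$-st superdiagonal, so its block in position $(i,j)$ equals $M_{p_{j-i+1}}$ for $j\ge i$ and $0$ otherwise. Compressing to any $d$ consecutive coordinate blocks, by restricting to the corresponding subspace $\mathcal{M}$ and projecting back, reproduces exactly the finite upper-triangular matrix $\mathscr{T}(M_{p_1},\ldots,M_{p_d})$, precisely because the operator is constant along its diagonals. As a compression $P_{\mathcal{M}}T|_{\mathcal{M}}$ of a contraction $T$ is again a contraction, I conclude $\|\mathscr{T}(M_{p_1},\ldots,M_{p_d})\|\le 1$, which is the asserted necessary condition. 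The only point demanding care, and the place I expect to be the main obstacle, is the indexing inherited from the spectral-theorem computation: one must check that after factoring out the unitary bilateral-shift tensor factor, whose presence is explained by $p(\boldsymbol 0)=0$ forcing the constant block to vanish, the diagonal of the infinite operator is indeed $M_{p_1}$ rather than the vanishing $M_{p_0}$, so that the consecutive-block compression lands on the intended finite matrix.
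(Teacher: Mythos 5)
Your proof is correct and is essentially the argument the paper leaves implicit: the paper presents Theorem \ref{main} as an ``evident'' consequence of Proposition \ref{spectralthm}, the point being exactly that $\mathscr{T}(M_{p_1},\ldots,M_{p_d})$ is a compression (to $d$ consecutive block coordinates) of the contractive bi-infinite block Toeplitz operator attached to $f=p+q$, whose first $d$ symbols agree with those of $p$ because $q$ vanishes to order $d$ at the origin. Your closing remark about the indexing --- that the unitary bilateral-shift factor may be split off so that $M_{p_1}$, not $M_{p_0}=0$, sits on the diagonal --- correctly fills in the one detail the paper glosses over.
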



Second, claim  that the polynomial $p$ in \eqref{CFpol} and the polynomials $p_1, \ldots, p_d$ in \eqref{MultiDimension} determine each other.  Consequently,  the reformulation of the {\tt CF problem (n,d)} announced in the Introduction follows using Proposition \ref{spectralthm}.  
	
To prove the claim, 	first note that $a_{e_{i_1}+\cdots+e_{i_k}}$ and the monomial $\prod_{p=1}^{k} {z_{n-i_p+1}z_{n-i_p+2}\cdots z_{n-1}}$ are invariant under the permutation of the indices $(i_1,\ldots, i_k).$ 
Therefore, to compute the monomial corresponding to $({i_1},\ldots , {i_k}),$ 
we  assume, without loss of generality, that $1\leq i_1\leq \cdots\leq i_k\leq n.$ 
Now, the monomial corresponding to $({i_1},\ldots , {i_k})$  is 
\begin{eqnarray}\label{MonomialGeneral}
z_{n-i_k+1}\cdots z_{n-i_{k-1}}\cdot z^2_{n-i_{k-1}+1}\cdots z^2_{n-i_{k-2}}\cdots 
z^{k-1}_{n-i_2+1}\cdots z^{k-1}_{n-i_1}\cdot z^k_{n-i_1+1}\cdots z^{k}_{n-1}.
\end{eqnarray}
If all of the multi-indices $i_1, \ldots ,i_k$ are $1,$ then the corresponding monomial in \eqref{MonomialGeneral} is the constant function $1.$ 
Otherwise, there exists $s\geq 1,$ such that $i_1=\cdots=i_{s-1}=1$ and $i_{s}>1.$ In this case, 
the exponent of each of the variables $z_{n-i_q+1},\ldots, z_{n-i_{q-1}},$ 
in \eqref{MonomialGeneral}, is $(k+1-q),$ $s\leq q \leq k,$  
where $i_0$ is assumed to be $1.$ 
Thus if $(i_1,\ldots,i_k)\neq (i_1^\prime,\ldots,i_k^\prime),$ 
 $1\leq i_1\leq \cdots\leq i_k\leq n$ and 
$1\leq i_1^\prime\leq \cdots\leq i_k^\prime\leq n,$ 
then it is clear from \eqref{MonomialGeneral} 
that the monomials corresponding  to $(i_1,\ldots,i_k)$  and $(i_1^\prime,\ldots,i_k^\prime)$  are distinct. 
	
To give a necessary and sufficient condition for the existence of a solution to the {\tt CF problem (n,d)}, we need the notion  of \textit{complete polynomial extendibility} 
for the operator $\mathscr T(M_{p_1}, \ldots , M_{p_d})$. 
For each $k\in\{1,\ldots,d\},$ let $M_{p_k}$ be the operator defined in 
\eqref{MultiDimension}, where $p_k$ is the homogeneous term of degree $k$ in \eqref{CFpol}.

\begin{defn}[Completely Polynomially Extendible]
The operator $\mathscr T(M_{p_1},\ldots,M_{p_d}),$ with $p_j\in\mathscr{M}^{(j)}_{n-1},$ for $j=1,\ldots,d,$ is called {\em $m-$polynomially extendible} if there exist $M_{p_{d+1}},\ldots, M_{p_m}$ with  $p_j\in\mathscr{M}^j_{n-1},$ for $d+1\leq j\leq m$ such that $\mathscr T(M_{p_1},\ldots,M_{p_m})$ is a contraction. The operator $\mathscr T(M_{p_1},\ldots,M_{p_d})$ is called {\em completely polynomially extendible} if 
it is $m-$polynomially extendible for each $m.$
\end{defn}

It is easy to provide a necessary and sufficient condition for the existence 
of a solution for the {\tt CF problem (n,d)} using the notion of complete polynomially extendibility.

\begin{thm}
A solution to the {\tt CF problem (n,d)} exists if and only if $\mathscr T(M_{p_1},\ldots,M_{p_k})$ is completely polynomially extendible.
\end{thm}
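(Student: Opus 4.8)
The plan is to route everything through Proposition~\ref{spectralthm}. Writing $f=p+q$, the interpolation constraints $q^{(I)}(\boldsymbol 0)=0$ for $|I|\le d$ say exactly that $f$ agrees with $p$ up to order $d$, so the slices of $f$ recover the prescribed $p_1,\ldots,p_d$ (with $p_j\in\mathscr{M}^{(j)}_{n-1}$) while the higher slices $p_{d+1},p_{d+2},\ldots$ (each $p_j\in\mathscr{M}^{(j)}_{n-1}$) are at our disposal. By Proposition~\ref{spectralthm}, a solution exists precisely when these free slices can be chosen so that the bi-infinite operator $\mathscr{T}(M_{p_1},M_{p_2},\ldots)$ is a contraction. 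The theorem therefore amounts to the equivalence: such an infinite tail exists if and only if, for every $m$, some finite tail $p_{d+1},\ldots,p_m$ makes the section $\mathscr{T}(M_{p_1},\ldots,M_{p_m})$ contractive---which is exactly complete polynomial extendibility of $\mathscr{T}(M_{p_1},\ldots,M_{p_d})$.

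The forward implication is immediate. The infinite operator is block upper-triangular and Toeplitz along the coordinate shift, its $(i,j)$ block being $M_{p_{j-i+1}}$ for $j\ge i$; restricting to $m$ consecutive coordinates reproduces exactly the finite section $\mathscr{T}(M_{p_1},\ldots,M_{p_m})$. A compression of a contraction is a contraction, so if a solution $f$ exists then every finite section built from the slices of $f$ is contractive, witnessing complete polynomial extendibility.

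For the converse I would first manufacture a single tail by compactness. Each slice lies in the finite-dimensional space $\mathscr{M}^{(j)}_{n-1}$, and feeding a vector through the column of a contractive section that carries all the blocks gives $\sum_{k=1}^{m}\|M_{p_k}v\|^2\le\|v\|^2$, whence $\|p_k\|_\infty=\|M_{p_k}\|\le 1$; thus the admissible $p_k$ range over a bounded, hence relatively compact, set. Running over $m=d+1,d+2,\ldots$ and passing to successive subsequences, a diagonal argument extracts one sequence $\{p_k\}_{k>d}$, with $p_k\in\mathscr{M}^{(k)}_{n-1}$, all of whose finite sections are contractions---here I use that in a finite-dimensional space coefficient convergence forces operator-norm convergence of the corresponding $M_{p_k}$.

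The remaining step, promoting contractivity of every finite section to contractivity of the infinite operator $L:=\mathscr{T}(M_{p_1},M_{p_2},\ldots)$, is the one I expect to be delicate. I would exploit that $L$ is block-triangular and commutes with the bilateral coordinate shift. The bound $\sum_{k=1}^{m}\|M_{p_k}v\|^2\le\|v\|^2$, now with the limiting slices, shows that $L\xi$ is $\ell^2$-summable for every finitely supported $\xi$, so $L$ is at least densely defined. Triangularity and shift-invariance then identify $P_W L\xi$ with the image of $\xi$ under a single finite section, for every window $W$ large enough to contain the support of $L\xi$; since that section is a contraction and $P_W L\xi\to L\xi$ as $W$ exhausts $\Z$, we obtain $\|L\xi\|\le\|\xi\|$ on a dense set, hence $\|L\|\le 1$. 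By Proposition~\ref{spectralthm} the associated $f$ maps $\D^n$ into $\D$ and agrees with $p$ through order $d$, so $q:=f-p$ solves the {\tt CF problem (n,d)}.
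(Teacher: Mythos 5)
The paper states this theorem with no proof at all (it is prefaced only by ``It is easy to provide a necessary and sufficient condition\dots''), so there is no step-by-step argument to compare against; your route through Proposition~\ref{spectralthm} is plainly the intended one, and the two steps you supply are precisely the content the paper suppresses. Both are needed and both are essentially correct: (i) the compactness/diagonalization step is genuinely required, because $m$-polynomial extendibility allows the extending tail $p_{d+1},\ldots,p_m$ to change with $m$, and your justification works---the column estimate $\sum_{k\le m}\|M_{p_k}v\|^2\le\|v\|^2$ gives $\|p_k\|_\infty=\|M_{p_k}\|\le 1$, the coefficient spaces are finite-dimensional, and operator-norm limits of contractive sections are contractive; (ii) the passage from contractivity of every finite section to contractivity of the bi-infinite block Toeplitz operator, via dense definedness on finitely supported vectors and exhausting windows identified (by triangularity and the Toeplitz structure) with finite sections, is a correct and standard argument.

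There is, however, one step that does not close as written, visible only for $n\ge 3$: your final sentence ``by Proposition~\ref{spectralthm} the associated $f$ maps $\D^n$ into $\D$'' presumes that \emph{any} sequence of slices $p_k\in\mathscr{M}^{(k)}_{n-1}$ is the slice sequence of some holomorphic function on $\D^n$. That surjectivity fails for $n\ge 3$: the correspondence \eqref{MultiDimension} maps the homogeneous degree-$k$ part of a function in $n$ variables onto the span of the monomials \eqref{MonomialGeneral}, which is a \emph{proper} subspace of $\mathscr{M}^{(k)}_{n-1}$. Concretely, for $n=3$, $k=1$ the image is the span of $\{1,\,z_2,\,z_1z_2\}$, while $z_2^2$ lies in $\mathscr{M}^{(1)}_{2}$; a tail containing such a slice corresponds to no holomorphic $q$ on $\D^3$ (formally it would produce the non-holomorphic term $z_2^2/z_1$). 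So the limit slices produced by your diagonal argument need not yield the ``associated $f$.'' The repair is to run your entire argument with the extension slices constrained to the image subspace of \eqref{MultiDimension}: it is finite-dimensional, hence closed, so the compactness, diagonalization, and limiting steps go through verbatim, and Proposition~\ref{spectralthm} then applies. To be fair, this imprecision is inherited from the paper itself (both the {\tt CF problem (R)} and the definition of complete polynomial extendibility use all of $\mathscr{M}^{(k)}_{n-1}$), and for $n=2$ the two spaces coincide, so in that case your proof is complete exactly as written.
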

We give an algorithm to obtain a solution to the {\tt CF problem (n,d)} 
analogous to the one given in the case of the two variables. 
As in that case, 
for $p_k \in \mathscr M_{n-1}^{(k)},$ $1\leq k \leq d,$
let $P$ be the polynomial $P(z)=M_{p_0}+M_{p_1}z+\cdots + M_{p_d}z^d.$
We shall say that $P$ is a polynomial 
in the CF class if  there is a holomorphic function 
$f:\mathbb{D}\to \mathcal B(L^2(\mathbb{T}^{n-1}))$ 
satisfying properties stated in the {\tt CF problem (R)}. 
Such a function $f$ will be called a 
CF-extension of the polynomial $P$. 
It follows that a solution to 
the {\tt CF problem (n,d)} exists 
if and only if the polynomial $P$ is in the CF class. 
\subsection{Algorithm for finding a solution to the CF problem} This algorithm identifies all polynomials $p$ such that the {\tt CF problem (n,d)} admits a solution.

\begin{itemize}
\item If $\|\mathscr T(M_{p_1},\ldots,M_{p_k})\|\leq 1,$ 
then move to the next step, 
otherwise $P$ is not a CF class polynomial.
\item Parrott's theorem gives all possible operators 
$T$ 
such that $\mathscr T(M_{p_1},\ldots,M_{p_k},T)$ is a contraction.
Let $\mathcal{C}_{k+1}$ be the set of all operators $T$ 
such that $T=M_{p_{k+1}}$ for some $p_{k+1}\in\mathscr{M}^{(k+1)}_{n-1}$   
and $\mathscr T(M_{p_1},\ldots,M_{p_k},T))$ is a contraction.
If $\mathcal{C}_{k+1}$ is empty then $P$ is not a CF class polynomial.
\item For each $s>k+1,$ using Parrott's theorem we can construct $\mathcal{C}_s,$ 
the set of all operators $T,$ 
such that $T=M_{p_s}$ for some 
$p_s\in\mathscr{M}^{(s)}_{n-1}$ 
and $\mathscr{T}(M_{p_1},\ldots,M_{p_{s-1}},T)$ is a contraction, 
where $M_{p_j}$ is an element of $\mathcal{C}_j$ for $j=k+1,\ldots,s-1.$ 
\item If all of the sets $\mathcal{C}_s$ are non-empty, then and only then  $P$ is a CF class polynomial.
\end{itemize}

\section{A generalization of Nehari's Theorem}\label{Nehari}
Let $M$ be a closed subspace of a Hilbert space $\h.$ 
For any point $\phi$ in a Hilbert space $\h$, 
the distance of $M$ from $\phi$ is attained at $P(\phi),$ 
where $P$ is the orthogonal projection of $\h$ onto $M$.  
A deep result, Theorem \ref{NT1Var}, of Nehari shows that the distance of a function 
$\phi$ in $L^{\infty}(\T)$ from 
the closed subspace $H^{\infty}(\T)$ is the norm of the Hankel operator 
$H_\phi$ with symbol $\phi.$ 

\subsection{Nehari's theorem for \texorpdfstring{$L^{\infty}(\mathbb T^2)$}{TEXT}}
In this subsection, 
we shall present a possible multivariate generalization 
of Nehari's theorem for $L^{\infty}(\mathbb T^2).$ 
This generalization is most conveniently stated 
in terms of the D-slice ordering on $\Z^2.$  

For a fixed $k\in\mathbb{Z}$, define 
$P_k:=\left\{\left(x,y\right)|x+y=k \right\}.$ 
The subsets $P_k$ of $\mathbb Z^2$ are disjoint and
$\bigsqcup_{k\in\mathbb{Z}}P_k=\mathbb{Z}^2.$
An order on $\mathbb{Z}^2,$ 
which we call the {\tt D-slice} ordering, 
is  defined below. 
It is obtained from the usual co-lexicographic ordering by rotating it through an angle of $\tfrac{\pi}{4}.$ 

\begin{defn}[D-slice ordering] \label{D-slice}
For $(x_1,y_1)\in P_l$ and $(x_2,y_2)\in P_m,$ 
\begin{enumerate}
	\item if $l=m,$ then the order between $(x_1,y_1)$ and  $(x_2,y_2)$  is determined 
	by the lexicographic ordering 
	on $P_l\subseteq \mathbb Z^2$ and    
	\item  if $l < m$ (resp., if $l > m$), 
	then $(x_1,y_1) < (x_2,y_2)$ (resp., $(x_1,y_1) > (x_2,y_2)$).  
\end{enumerate}
\end{defn}

Define
\[H_1:=\Big\{f:=\!\!\!\!\!\sum\limits_{(m,n)\in A_1}\!\!\!\!\!\!\!\!a_{m,n}z_{1}^{m}z_{2}^{n}|f\in L^{\infty}(\T^2)\Big\},\,
H_2:=\Big\{f:=\!\!\!\!\!\sum\limits_{(m,n)\in A_2}\!\!\!\!\!\!\!\!a_{m,n}z_{1}^{m}z_{2}^{n}|f\in L^{\infty}(\T^2)\Big\},\] 
where $A_1:=\{(m,n)\in \Z^2:m+n\geq 0\}$ and $A_2:=\{(m,n)\in \Z^2:m+n<0\}.$ 
$H_1$ and $H_2$ are two closed and disjoint subspaces 
of $L^{\infty}(\T^2)$ satisfying 
$L^{\infty}(\T^2)=H_1 \oplus H_2$. 
The answer to the following question on 
$L^{\infty}(\mathbb T^2)$ 
would be one possible generalization of the Nehari's theorem. 
Let $\mbox{\rm dist}_{\infty}(\phi,H_1),$ denote the distance of $\phi$ 
	from the subspace $H_1?$

\begin{Qn}
	 	For any $\phi$ in $L^{\infty}(\T^2),$ 
	 	what is $\mbox{\rm dist}_{\infty}(\phi,H_1)?$ 
\end{Qn}
To answer this question, it will be convenient to introduce the notion of a Hankel operator 
with symbol $\phi,$  $\phi \in L^\infty(\mathbb T^2).$

\subsection{The Hankel matrix corresponding to \texorpdfstring{$\phi$}{TEXT}}
Suppose $f$ is a function in $L^{2}(\T^2).$  
Then $f$ can be represented in the following power series
\[f(z_1,z_2)=\sum\limits_{m,n\in\Z}\!\!\!\!\!a_{m,n}z_{1}^{m}z_{2}^{n}
=\sum\limits_{m,n\in A_1}\!\!\!\!\!a_{m,n}z_{1}^{m}z_{2}^{n} + 
\sum\limits_{m,n\in A_2}\!\!\!\!\!a_{m,n}z_{1}^{m}z_{2}^{n}.\]
Suppose $z_2=\lambda z_1$. 
Then
\[f(z_1,\lambda z_1)=\sum\limits_{k\geq 0}\left(\sum\limits_{m+n=k}
\!\!\!\!\!a_{m,n}\lambda^n\right)z_{1}^{k} + 
\sum\limits_{k<0}\left(\sum\limits_{m+n=k}\!\!\!\!\!a_{m,n}\lambda^n\right)z_{1}^{k}.\]
Setting 
$f_{k}(\lambda):=\sum_{m+n=k}a_{m,n}\lambda^n,$ 
we have  
\begin{eqnarray}\label{f(z,cz)}
f(z_1,\lambda z_1)=\sum_{k\in\Z}f_{k}(\lambda)z_{1}^{k}.
\end{eqnarray}
In this way, $L^{2}(\T^2)$ 
is first identified  
with $L^{2}(\T)\otimes L^{2}(\T)$ and then  
a second time with 
$L^{2}(\T)\otimes \ell^2(\Z)$, 
the identifications in both cases  
being isometric. 
For any $\phi\in L^{\infty}(\T^2),$ 
define the multiplication operator 
$M_{\phi}:L^{2}(\T)\otimes \ell^2(\Z)\to L^{2}(\T)\otimes \ell^2(\Z)$ 
as follows 
\[M_{\phi}\left(\sum\limits_{j\in\Z}g_j\otimes e_j\right):=
\sum\limits_{k\in\Z}\left(\sum\limits_{q\in\Z}g_q\phi_{q+k}\right)\otimes e_k,\]
where $\phi_j$ satisfies $\phi(z_1,\lambda z_1)=\sum_{j\in \Z}\phi_j(\lambda)z_1^k.$ 

\begin{lem}\label{Norm of Multiplication Operator}
	For any $\phi\in L^{\infty}(\T^2),$ 
	we have $\|M_{\phi}\|=\|\phi\|_{\T^2,\infty}.$ 
\end{lem}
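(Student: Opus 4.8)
The plan is to show that, under the two isometric identifications $L^2(\T^2)\cong L^2(\T)\otimes L^2(\T)\cong L^2(\T)\otimes\ell^2(\Z)$ described above, the operator $M_\phi$ is nothing but pointwise multiplication by $\phi$ on $L^2(\T^2)$. Once this is established, the equality $\|M_\phi\|=\|\phi\|_{\T^2,\infty}$ becomes the two-variable instance of the standard fact that multiplication by an $L^\infty$ function on the $L^2$-space of a finite measure space has operator norm equal to its essential supremum --- exactly the one-variable statement $\|M_\phi\|=\|\phi\|_{\infty}$ recorded just after the definition of the multiplication operator.

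First I would verify the unitary equivalence by a direct index chase. Writing $f(z_1,\la z_1)=\sum_k f_k(\la)z_1^k$ and $\phi(z_1,\la z_1)=\sum_j\phi_j(\la)z_1^j$ as in \eqref{f(z,cz)}, the product $\phi f$ is found by collecting equal powers of $z_1$: the coefficient of each $z_1^k$ is a convolution in the indices of the slice functions $(\phi_j)$ and $(f_q)$, the two factors being multiplied pointwise in $\la$. This convolution-in-the-index, pointwise-in-$\la$ prescription is precisely the rule displayed in the definition of $M_\phi$ acting on $\sum_j g_j\otimes e_j$. Hence, under the two isometric identifications, $M_\phi$ is carried to the operator of honest multiplication $g\mapsto\phi g$ on $L^2(\T^2)$, and in particular the two operators have the same norm.

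It then remains to record the norm formula for such a multiplication operator. The upper bound $\|M_\phi\|\le\|\phi\|_{\T^2,\infty}$ is immediate from $|\phi(w)g(w)|\le\|\phi\|_{\T^2,\infty}\,|g(w)|$ for almost every $w\in\T^2$, after integrating. For the reverse inequality, fix $\ep>0$ and let $E:=\{w\in\T^2:|\phi(w)|>\|\phi\|_{\T^2,\infty}-\ep\}$, a set of positive Haar measure by the definition of the essential supremum. Testing on the normalized indicator $g:=\chi_E/\sqrt{|E|}$ gives $\|M_\phi g\|_2^2=\tfrac{1}{|E|}\int_E|\phi|^2\ge(\|\phi\|_{\T^2,\infty}-\ep)^2$, so that $\|M_\phi\|\ge\|\phi\|_{\T^2,\infty}-\ep$; letting $\ep\to 0$ yields $\|M_\phi\|\ge\|\phi\|_{\T^2,\infty}$.

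The only genuine point requiring care is the first step: matching the somewhat opaque defining formula of $M_\phi$ --- phrased in terms of the slice functions $\phi_j$ and the sequence model $L^2(\T)\otimes\ell^2(\Z)$ --- to honest multiplication by $\phi$. This is a bookkeeping verification that the displayed formula reproduces the Laurent (convolution) structure of multiplication in the $z_1$-variable while acting pointwise in $\la$; up to the reflection/sign convention implicit in the indexing of the basis $\{e_j\}$, it is routine. Once this unitary equivalence is in hand, the norm computation is entirely standard and parallels the one-variable case already invoked in the Preliminaries.
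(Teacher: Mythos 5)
Your proof is correct, and it reaches the conclusion by a genuinely different route than the paper. The paper never passes to the measure-theoretic characterization of a multiplication operator's norm on $L^2(\T^2)$; instead it writes the matrix of $M_\phi$ with respect to the orthonormal basis $\{z^i\otimes e_j\}$ in the D-slice ordering, obtaining the doubly infinite block Toeplitz matrix with entries $M_{\phi_k}$, and then computes the norm slice-wise: for each fixed $\la\in\T$ the scalar Toeplitz matrix $\big(\phi_{j-i}(\la)\big)$ has norm $\sup_{z\in\T}\big|\sum_k\phi_k(\la)z^k\big|$ by the quoted one-variable identity $\|M_\psi\|=\|\psi\|_{\T,\infty}$, and the norm of the block matrix is the supremum over $\la$ of these scalar norms (a decomposable-operator fact which the paper asserts by simply equating its two displayed matrices). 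You instead transport $M_\phi$ back to honest multiplication by $\phi$ on $L^2(\T^2)$ and prove the norm identity there from scratch, the lower bound via normalized indicators of $E=\{|\phi|>\|\phi\|_{\T^2,\infty}-\ep\}$. Your route is more elementary and self-contained: in particular it bypasses the step ``$\sup_\la$ of the scalar Toeplitz norms equals the block Toeplitz norm,'' which is the one point the paper leaves unargued, and it uses only that $\T^2$ carries a finite measure (with $\|\cdot\|_{\T^2,\infty}$ read as the essential supremum, as you do). What the paper's route buys is that it keeps the slice functions $\phi_k$ and the block-matrix picture in the foreground, which is exactly the structure reused immediately afterwards to define the Hankel operator $H_\phi$ and to prove Theorem \ref{thm 4.6}. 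One point you should make explicit in your first step: as literally written, the defining formula sends $g_q\otimes e_q$ to $\sum_k g_q\phi_{q+k}\otimes e_k$, whose matrix entries depend on $q+k$ (Hankel-type), so the operator it defines is multiplication by $\phi$ composed with $I\otimes R$, where $R e_j=e_{-j}$; since $R$ is unitary, composing with it changes neither the norm nor the conclusion. This is precisely the ``reflection convention'' you flagged, but it deserves that one-line justification rather than being waved through as bookkeeping.
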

\begin{proof}
Let $\phi\in L^{\infty}(\T^2)$ 
be an arbitrary element. 
From \eqref{f(z,cz)}, 
it follows that 
$\phi(z,\lambda z)=\sum_{k\in\Z}{\phi}_{k}(\lambda)z^{k}$ 
for some ${\phi}_k$ in $L^\infty(\mathbb T).$ 
The set of vectors $\left\{z^i \otimes e_j:(i,j)\in\Z^2\right\}$ 
is an orthonormal basis 
in $L^{2}(\T)\otimes \ell^2(\Z)$. 
The matrix of the operator $M_{\phi}$ 
with respect to this basis 
and the D-slice ordering on its index set is of the form 
\[\left(
\begin{array}{ccccc}
	 & \vdots & \vdots & \vdots & \\
	\cdots & M_{{\phi}_{-1}} & M_{{\phi}_{0}} & M_{{\phi}_{1}} & \cdots\\
	\cdots & M_{{\phi}_{-2}} & M_{{\phi}_{-1}} & M_{{\phi}_{0}} & \cdots\\
	\cdots & M_{{\phi}_{-3}} & M_{{\phi}_{-2}} & M_{{\phi}_{-1}} & \cdots\\
	 & \vdots & \vdots & \vdots & \\ 
\end{array}
\right).
\] 
We know that 
$\|\phi\|_{\T^2,\infty}=\sup_{\lambda\in\T}\sup_{z\in\T}\left|\sum_{k\in\Z}{\phi}_{k}(\lambda)z^{k}\right|$. 
Thus
\begin{align*}
	\|\phi\|_{\T^2,\infty}&=\sup_{\lambda\in\T}
	\left\|
	\left(
	\begin{array}{ccccc}
		 & \vdots & \vdots & \vdots & \\
		\cdots & {\phi}_{-1}(\lambda) & {\phi}_{0}(\lambda) & {\phi}_{1}(\lambda) & \cdots\\
		\cdots & {\phi}_{-2}(\lambda) & {\phi}_{-1}(\lambda) & {\phi}_{0}(\lambda) & \cdots\\
		\cdots & {\phi}_{-3}(\lambda) & {\phi}_{-2}(\lambda) & {\phi}_{-1}(\lambda) &\cdots\\
		 & \vdots & \vdots & \vdots & \\ 
	\end{array}
	\right)
	\right\| \\
	&=
	\left\|
	\left(
	\begin{array}{ccccc}
		 & \vdots & \vdots & \vdots & \\
		\cdots & M_{{\phi}_{-1}} & M_{{\phi}_{0}} & M_{{\phi}_{1}} & \cdots\\
		\cdots & M_{{\phi}_{-2}} & M_{{\phi}_{-1}} & M_{{\phi}_{0}} & \cdots\\
		\cdots & M_{{\phi}_{-3}} & M_{{\phi}_{-2}} & M_{{\phi}_{-1}} & \cdots\\
		 & \vdots & \vdots & \vdots & \\ 
	\end{array}
	\right)
	\right\|.
\end{align*}
Hence $\|\phi\|_{\T^2,\infty}=\|M_{\phi}\|$ completing the proof.
\end{proof}

The Hilbert space $\ell^{2}(\N_0)$ and the normed linear subspace 
$$\big \{(\ldots,0,x_0,x_1,\ldots):\sum_{i\geq 0}|x_i|^2 < 
\infty \mbox{ with }x_0 \mbox{ at the } 0^{th} 
\mbox{ position}\big \}$$
of $\ell^2(\mathbb Z)$ 
are naturally isometrically isomorphic.
Let $H:=L^2(\T)\otimes \ell^2(\N_0)$. 
The space $H$ is a closed subspace of 
$L^{2}(\T)\otimes \ell^{2}(\Z)$. 
We define Hankel operator 
$H_{\phi}$ with symbol $\phi\in L^{\infty}(\T^2)$ 
to be the operator 
$P_{H^{\perp}}\circ {M_{\phi}}_{|H}$. 
Writing down the matrix for $H_{\phi}$ 
with respect to the bases 
$\{z^i \otimes e_j:i\in\Z,j=0,1,2,\ldots\}$ 
and 
$\{z^i \otimes e_{-j}:i\in\Z,j=1,2,\ldots\}$ 
in the spaces $H$ 
and $H^{\perp}$ respectively, 
we get 
\[H_{\phi}=
\left(
\begin{array}{cccc}
	M_{{\phi}_{-1}} & M_{{\phi}_{-2}} & M_{{\phi}_{-3}} & \cdots\\
	M_{{\phi}_{-2}} & M_{{\phi}_{-3}} & M_{{\phi}_{-4}} & \cdots\\
	M_{{\phi}_{-3}} & M_{{\phi}_{-4}} & M_{{\phi}_{-5}} & \cdots\\
	\vdots & \vdots & \vdots & \\ 
	\end{array}
\right),\, \phi \in L^\infty(\mathbb T^2).
\]
We note that the operator $H_{\phi}$ is the Hankel operator with symbol $\phi$ modulo the sign of the indices, see \cite[page 34]{SA}. However, we also note that it is different from the usual definition of either the big or the small Hankel operator in two variables as defined in \cite[Section 4.4]{BMW}.

\begin{lem}\label{Upper bound for Hankel matrix}
For any $\phi$ in $L^\infty(\mathbb T^2),$ 
we have $\|H_{\phi}\|\leq \mbox{\rm dist}_{\infty}(\phi,H_1)$.
\end{lem}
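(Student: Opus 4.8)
The plan is to follow the template of the ``easy half'' of the classical Nehari theorem (Theorem \ref{NT1Var}). Since $H_\phi = P_{H^\perp}\circ M_\phi|_H$ is a compression of $M_\phi$, one immediately has $\|H_\phi\| \le \|M_\phi\| = \|\phi\|_{\T^2,\infty}$ by Lemma \ref{Norm of Multiplication Operator}; the improvement to a distance estimate comes from the fact that the Hankel operator ``forgets'' the part of the symbol lying in $H_1$. Concretely, I would first record that the assignment $\phi \mapsto H_\phi$ is linear in $\phi$, being a composition of the restriction to $H$, the linear map $\phi \mapsto M_\phi$, and the projection $P_{H^\perp}$. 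The crux is then to prove that $H_g = 0$ for every $g \in H_1$: once this is available, linearity gives $H_{\phi - g} = H_\phi$ for all such $g$, whence $\|H_\phi\| = \|H_{\phi-g}\| \le \|M_{\phi-g}\| = \|\phi - g\|_{\T^2,\infty}$, and taking the infimum over $g \in H_1$ yields $\|H_\phi\| \le \mbox{\rm dist}_\infty(\phi, H_1)$.

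The key step, therefore, is the vanishing $H_g = 0$ for $g \in H_1$, and I would establish it by showing that $H$ is invariant under $M_g$. Under the identification $f \leftrightarrow \sum_k f_k \otimes e_k$ coming from \eqref{f(z,cz)}, the subspace $H = L^2(\T) \otimes \ell^2(\N_0)$ consists precisely of those $f \in L^2(\T^2)$ whose D-slice components $f_k$ vanish for $k < 0$, that is, whose Fourier support lies in $A_1 = \{(m,n): m+n \ge 0\}$. Since $A_1$ is closed under addition, multiplication by an element $g \in H_1$ (also supported in $A_1$) carries functions supported in $A_1$ to functions supported in $A_1$; hence $M_g H \subseteq H$, and consequently $P_{H^\perp}\circ M_g|_H = 0$, which is exactly $H_g = 0$. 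Equivalently, this can be read directly off the matrix form of $H_\phi$ displayed above: its blocks are $M_{\phi_{-1}}, M_{\phi_{-2}}, \ldots$, involving only the negative-index components $\phi_{-j}$, all of which vanish when $\phi = g \in H_1$.

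Finally, I would assemble the pieces: for $g \in H_1$ the operator $H_{\phi-g}$ is a compression of $M_{\phi-g}$ (both the restriction $|_H$ and the projection $P_{H^\perp}$ have norm at most $1$), so $\|H_{\phi-g}\| \le \|M_{\phi-g}\| = \|\phi - g\|_{\T^2,\infty}$ by Lemma \ref{Norm of Multiplication Operator}, while $H_{\phi-g} = H_\phi$ by the vanishing result; the infimum over $g \in H_1$ then delivers the claim. I do not expect a serious obstacle here, since the argument is a structural transcription of the one-variable proof. The one point demanding care is the identification of $H$ with the space of $L^2(\T^2)$-functions supported on the semigroup $A_1$ --- in particular checking that the $\ell^2(\Z)$-index coincides with the D-slice level $m+n$ --- so that the additive closure of $A_1$ can legitimately be invoked to conclude $M_g H \subseteq H$.
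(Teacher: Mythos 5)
Your proof is correct and takes essentially the same route as the paper: the compression bound $\|H_{\phi-g}\|\leq\|M_{\phi-g}\|=\|\phi-g\|_{\T^2,\infty}$ via Lemma \ref{Norm of Multiplication Operator}, the identity $H_{\phi-g}=H_{\phi}$ for $g\in H_1$, and then the infimum over $g$. The paper justifies that identity exactly by your second argument (the matrix of $H_{\phi}$ involves only the components $\phi_{-j}$, $j\geq 1$, which vanish for symbols in $H_1$); your semigroup/invariant-subspace argument for $H_g=0$ is simply a more detailed rendering of the same observation.
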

\begin{proof} 
From the definition of $H_{\phi}$ and 
Lemma \ref{Norm of Multiplication Operator}, 
it can be seen that
\[\big\|H_{\phi}\big\|=\big\|P_{H^{\perp}}\circ {M_{\phi}}_{|H}\big\|
\leq \big\|M_{\phi}\big\| = \big\|\phi\big\|_{\T^2,\infty}.\] 
Thus $\|H_{\phi}\|\leq \|\phi\|_{\T^2,\infty}$. 
From the matrix representation of $H_{\phi},$ 
it is clear that for any 
$g$ in $H_1,$ $H_{\phi-g}=H_{\phi}$. 
Hence 
$\|H_{\phi}\|=\|H_{\phi -g}\|\leq \|\phi-g\|_{\T^2,\infty}$. 
\end{proof}
For 
$n\in\N$, $a_0,a_1,\ldots,a_{n-1}\in\C$ 
and 
$(b_m)_{m\in\N},$ $b_m\in \C,$ 
define the  operator  
\[T_n\left((b_m),a_0,a_1,\ldots,a_{n-1}\right):=
\left(
\begin{array}{cccc}
	a_0 & a_1 & \cdots & a_{n-1}\\
	b_1 & a_0 & \cdots & a_{n-2}\\
	\vdots & \vdots & \ddots & \vdots\\
	b_{n-1} & b_{n-2} & \cdots & a_0\\
	\vdots & \vdots &  & \vdots\\
\end{array}
\right).
\]

\begin{lem}\label{Existence of essentially bounded function}
	Suppose    
	$f_0,f_1,\ldots,f_{n-1}\in L^{\infty}(\T)$ 
	and 
	$(g_m)_{m\in \mathbb N},\,g_m \in L^{\infty}(\T)$ 
	are such that 
	\[\sup_{\lambda\in\T}
	\|T_n\left((g_m(\lambda)),f_0(\lambda),\ldots,f_{n-1}(\lambda)\right)\|\leq 1.\]
	Then there exists $f_n\in L^{\infty}(\T)$ satisfying 	
	\[\sup_{\lambda\in\T}
	\|T_{n+1}\left((g_m(\lambda)),f_0(\lambda),\ldots,f_{n}(\lambda)\right)\|\leq 1.\]
\end{lem}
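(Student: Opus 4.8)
The plan is to solve the problem pointwise in $\lambda$ and then reassemble a measurable, essentially bounded coefficient $f_n$. For each fixed $\lambda\in\T$ the scalar matrix $T_n\big((g_m(\lambda)),f_0(\lambda),\ldots,f_{n-1}(\lambda)\big)$ is a contraction by hypothesis, and passing from $T_n$ to $T_{n+1}$ amounts to filling in a single unknown, the top-right entry $a_n=f_n(\lambda)$; by the Toeplitz pattern every other new entry of $T_{n+1}$ is already determined by the fixed data $(g_m(\lambda))$ and $f_0(\lambda),\ldots,f_{n-1}(\lambda)$. I would therefore split the domain as $\C^{n}\oplus\C$ (the first $n$ columns and the new column) and the codomain as $\C\oplus(\text{remaining rows})$ (the first row and the rest), so that $T_{n+1}$ acquires the block shape $\left(\begin{smallmatrix}A & X\\ C & D\end{smallmatrix}\right)$ with $X=f_n(\lambda)$ the only free scalar, and apply Parrott's theorem to fill in $X$.

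The first task is to check that Parrott's two hypotheses hold at every $\lambda$. The column block $\left(\begin{smallmatrix}A\\ C\end{smallmatrix}\right)$ is precisely $T_n\big((g_m(\lambda)),f_0(\lambda),\ldots,f_{n-1}(\lambda)\big)$, a contraction by the standing assumption, so that condition is immediate. The row block $\big(C\ D\big)$, on the other hand, does not involve the unknown $a_n$ at all, and by the Toeplitz structure it is the backward shift applied to $T_{n+1}$ (equivalently, $T_n$ with the column $(g_1(\lambda),g_2(\lambda),\ldots)$ adjoined on the left). Showing that $\big(C\ D\big)$ is a contraction uniformly in $\lambda$ is the delicate structural point of the argument: it is exactly here that the shift invariance of the $(b_m)$-pattern and the meaning of the $g_m$ as slice-coefficients of a fixed symbol have to be used, since mere contractivity of $T_n$ does not by itself control the adjoined column.

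Once both hypotheses of Parrott's theorem are in place for each $\lambda$, existence of an admissible $a_n(\lambda)$ is automatic, and the real content of the lemma is to make $\lambda\mapsto f_n(\lambda)$ measurable and essentially bounded. Rather than choose an arbitrary contraction in the Parrott parametrization, I would select the central solution $V=0$, which gives the explicit formula $f_n(\lambda)=-Z(\lambda)\,C(\lambda)^{*}\,Y(\lambda)$, where $Y(\lambda),Z(\lambda)$ are the defect factors determined by $D(\lambda)=\big(I-C(\lambda)C(\lambda)^{*}\big)^{1/2}Y(\lambda)$ and $A(\lambda)=Z(\lambda)\big(I-C(\lambda)^{*}C(\lambda)\big)^{1/2}$. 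Because $A(\lambda),C(\lambda),D(\lambda)$ are assembled from $f_0(\lambda),\ldots,f_{n-1}(\lambda)$ and the $g_m(\lambda)$ by algebraic operations and the continuous functional calculus of the self-adjoint contractions $I-C^{*}C$ and $I-CC^{*}$, the map $\lambda\mapsto f_n(\lambda)$ comes out measurable, and the contractivity of $T_{n+1}(\lambda)$ forces $|f_n(\lambda)|\le 1$, so $f_n\in L^{\infty}(\T)$. I expect the main obstacle to be precisely this last step: arranging the square-root/defect factors to depend measurably on $\lambda$ (so that the Parrott formula defines a bona fide measurable function, despite the infinite-dimensional factor $I-CC^{*}$) and extracting a genuinely uniform bound from contractivity, rather than the pointwise existence, which is routine given the first two steps.
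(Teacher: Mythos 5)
Your proposal follows the same route as the paper's own proof: fix $\lambda$, write $T_{n+1}(\lambda)$ as a block matrix $\left(\begin{smallmatrix}A & X\\ C & D\end{smallmatrix}\right)$ in which the scalar corner $X=f_n(\lambda)$ is the only unknown, invoke Parrott's theorem, and take the central solution $V=0$ so that the resulting explicit formula for $f_n(\lambda)$ is measurable and essentially bounded, being built from the data by algebraic operations and continuous functional calculus. This is precisely the paper's argument, down to the choice $V=0$ and the treatment of the defect operators.

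The genuine problem is the step you flag but defer: contractivity of the row block $\big(C\ \ D\big)$, which, as you correctly observe, is $T_n$ with the column $g:=(g_1,g_2,\ldots)^{\rm t}$ adjoined on the left. You hope this follows from the shift structure or from the $g_m$ being slice coefficients of a fixed symbol, but the hypotheses of the lemma impose no such constraint: the $g_m$ are arbitrary $L^\infty$ functions, and $\|T_n\|\le 1$ does not control the adjoined column. In fact no argument can close this gap, because the lemma as stated is false. Take $n=1$ and constants $f_0=0$, $g_1=g_2=1/\sqrt{2}$, $g_m=0$ for $m\ge 3$: then $T_1=(f_0,g_1,g_2,0,\ldots)^{\rm t}$ has norm exactly $1$, so the hypothesis holds; but the two columns of $\big(g\ \ T_1\big)$ are unit vectors with inner product $1/2$, so $\big\|\big(g\ \ T_1\big)\big\|^2=3/2$, and since $\big(g\ \ T_1\big)$ is the compression of $T_2\big((g_m),f_0,f_1\big)$ to the rows $2,3,\ldots$, every choice of $f_1$ yields $\|T_2\|^2\ge 3/2>1$. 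For what it is worth, the paper's proof has the identical defect: it quotes Parrott's parametrization of ``all possible choices of $f_n(\lambda)$'' (and the solvability of $R=(I-SS^*)^{1/2}Y$, $Q=Z(I-S^*S)^{1/2}$) without verifying the row hypothesis of Parrott's theorem, which the stated assumption does not imply. What is true, and provable verbatim by your argument, is the statement with the stronger hypothesis that the matrix obtained from $T_n$ by adjoining all the columns $(g_k,g_{k+1},\ldots)^{\rm t}$, $k\ge 1$, has supremum norm at most $1$; that is the hypothesis actually available where the lemma is used in the proof of Theorem \ref{thm 4.6}, since there the data come from the Hankel operator $H_\phi$ with $\|H_\phi\|\le 1$, and with it both Parrott constraints become compressions (up to a permutation of columns) of the hypothesis matrix, after which your measurability argument finishes the induction.
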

\begin{proof}
Let \[Q(\lambda)=
\left(\!\!
\begin{array}{ccc}
	f_0(\lambda)\!\!\!&\!\!\cdots\!\!&\!\!\!f_{n-1}(\lambda)\\
\end{array}
\right),R(\lambda)=
\left(
\begin{array}{ccccc}
	f_{n-1}(\lambda)\!\!\! &\!\! \cdots \!\!&\!\!\! f_0(\lambda) & \!\!\!g_1(\lambda) \!\!&\!\! \cdots\\
\end{array}
\right)^{\rm t}
\]
and 
\[S(\lambda)=
\left(
\begin{array}{cccc}
	g_1(\lambda) & f_0(\lambda) & \cdots & f_{n-3}(\lambda)\\
	g_2(\lambda) & g_1(\lambda) & \cdots & f_{n-4}(\lambda)\\
	\vdots & \vdots &\ddots & \vdots\\
	g_{n-1}(\lambda) & g_{n-2}(\lambda) & \cdots & g_1(\lambda)\\
	\vdots & \vdots & & \vdots\\
\end{array}
\right).
\]
All possible choices of $f_n(\lambda),$ 
for which  
$T_{n+1}\left((g_m(\lambda)),f_0(\lambda),\ldots,f_{n}(\lambda)\right)$ 
is a contraction are given, via Parrott's theorem 
(cf. \cite[Chapter 12, page 152]{NY}), 
by the formula
\begin{equation}\label{All solutions in Parrot's theorem}
	f_n(\lambda)=(I-ZZ^*)^{1/2}V(I-Y^*Y)^{1/2}-ZS(\lambda)^*Y, 
\end{equation}
where $V$ is an arbitrary contraction 
and the operators $Y,$ $Z$ are obtained from the 
formulae $R(\lambda)=(I-S(\lambda)S(\lambda)^*)^{1/2}Y$, 
$Q(\lambda)=Z(I-S(\lambda)^*S(\lambda))^{1/2}.$ 

We note that every entry of $I-S(\lambda)^*S(\lambda)$ 
is in $L^{\infty}$ as a function of $\lambda.$ 
Thus all entries in $(I-S(\lambda)^*S(\lambda))^{1/2}$ 
are measurable functions which are essentially bounded. 
Consequently, so are entries of $Z.$ 
A similar assertion can be made for $Y.$
Therefore, choosing $V=0$ 
in equation \eqref{All solutions in Parrot's theorem}, 
we get $f_n$ with the required property. 
In fact, one can choose $V$ to be any contraction 
whose entries are $L^{\infty}$ functions. 
\end{proof}
Let $\h$ be a Hilbert space. 
For any $(T_n)_{n\in\N},$ $T_n\in \mathcal{B}(\h),$ 
define an operator $H(T_1,T_2,\ldots)$ as follows: 
\[H(T_1,T_2,\ldots)=
\left(
\begin{array}{cccc}
	T_1 & T_2 & T_3 & \cdots\\
	T_2 & T_3 & T_4 & \cdots\\
	T_3 & T_4 & T_5 & \cdots\\
	\vdots & \vdots & \vdots & \\ 
\end{array}
\right).
\]
\begin{thm}[Nehari's theorem for $L^{\infty}(\mathbb T^2)$] \label{thm 4.6}
	If $\phi\in L^{\infty}(\T^2),$ 
	then $\|H_{\phi}\|=\mbox{\rm dist}_{\infty}(\phi,H_1)$.
\end{thm}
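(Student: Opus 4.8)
The inequality $\|H_\phi\|\le \mbox{dist}_\infty(\phi,H_1)$ is already Lemma \ref{Upper bound for Hankel matrix}, so the entire content is the reverse estimate $\mbox{dist}_\infty(\phi,H_1)\le\|H_\phi\|$; exactly as in the one-variable case this is the constructive direction, where a near-optimal $g\in H_1$ must be manufactured. After normalizing $\|H_\phi\|=1$, I would pass to the slice coefficients $\phi_k\in L^\infty(\mathbb T)$ defined by $\phi(z,\lambda z)=\sum_k\phi_k(\lambda)z^k$. Since every $g\in H_1$ affects only the coefficients of index $k\ge 0$, the task reduces to producing $\psi_0,\psi_1,\ldots\in L^\infty(\mathbb T)$, the free coefficients of $\phi-g$, so that the completed symbol $\psi$, whose negative coefficients remain the prescribed $\phi_k$ $(k<0)$, satisfies $\|\psi\|_{\mathbb T^2,\infty}\le 1$; then $g:=\phi-\psi$ lies in $H_1$ and witnesses $\mbox{dist}_\infty(\phi,H_1)\le 1$.

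The engine of the construction is an iterated one-step extension, and the hypothesis $\|H_\phi\|\le 1$ is what allows it to start and to persist. Because every block of $H_\phi$ is a multiplication operator, a direct-integral (fiberwise in $\lambda$) decomposition gives $\|H_\phi\|=\mbox{ess\,sup}_\lambda\|H_\phi(\lambda)\|$, so the scalar Hankel matrices $H_\phi(\lambda)$ are contractions for almost every $\lambda$, which supplies the contractivity that feeds Parrott's theorem at each stage. I would then invoke Lemma \ref{Existence of essentially bounded function} repeatedly to select $\psi_0,\psi_1,\ldots$ one at a time, maintaining $\sup_\lambda\|T_{n}((\phi_{-m}(\lambda)),\psi_0(\lambda),\ldots,\psi_{n-1}(\lambda))\|\le 1$ for every $n$; the role of that lemma is precisely that each new coefficient can be taken measurable and essentially bounded, so the $\psi_k$ assemble into genuine $L^\infty(\mathbb T)$ functions. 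Letting $n\to\infty$, the sections $T_n$ exhaust the semi-infinite Toeplitz operator with symbol $\psi(\cdot,\lambda)$, so that operator is a contraction for a.e. $\lambda$; since a Toeplitz operator has the same norm as the supremum norm of its symbol, $\|\psi(\cdot,\lambda)\|_\infty\le 1$ a.e., and Lemma \ref{Norm of Multiplication Operator} upgrades this to $\|\psi\|_{\mathbb T^2,\infty}=\|M_\psi\|\le 1$, completing the reverse inequality.

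The main obstacle, and the true heart of the matter, is the persistence of extendibility through the induction. Contractivity of the already-built section $T_n$ alone does \emph{not} guarantee that Parrott's theorem applies at the next stage: the second data block entering Parrott's theorem is a shifted compression built from the Hankel data $\phi_{-1},\phi_{-2},\ldots$, and its contractivity must be secured separately — this is exactly where $\|H_\phi(\lambda)\|\le 1$ has to be reused at every step, and where the earlier coefficients $\psi_0,\ldots,\psi_{n-1}$ must have been chosen compatibly rather than arbitrarily (a naive choice such as $\psi_0\equiv 0$ can already destroy the next extension). I would therefore organize the induction around the stronger invariant that both the current section and the relevant shifted Hankel block stay contractive, deducing the latter from $\|H_\phi\|\le 1$; the one-variable Nehari theorem (Theorem \ref{NT1Var}) applied fiberwise underlies the existence of such a compatible completion, while Lemma \ref{Existence of essentially bounded function} furnishes the measurable, uniformly bounded selection that turns the fiberwise solutions into a single $g\in H_1$.
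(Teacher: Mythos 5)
Your skeleton is the same as the paper's: Lemma \ref{Upper bound for Hankel matrix} gives $\|H_\phi\|\leq \mbox{\rm dist}_\infty(\phi,H_1)$, and the reverse inequality is obtained by an inductive one-coefficient completion, fiberwise in $\lambda$, driven by Parrott's theorem with the measurable selection $V=0$ from the proof of Lemma \ref{Existence of essentially bounded function}, followed by a limiting argument and Lemma \ref{Norm of Multiplication Operator}. You have also correctly diagnosed the point the paper treats too briskly: contractivity of the section $T_n$ alone does not legitimize the next Parrott step. (Concretely, with $\phi_{-1}\equiv 1/\sqrt2$, $\phi_{-k}\equiv 0$ for $k\geq 2$, the admissible choice $\psi_0\equiv 1/\sqrt2$ makes $T_1$ a contraction, yet no $\psi_1$ whatsoever makes $T_2$ a contraction.)

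However, the invariant you propose to carry through the induction does not close on itself. At stage $n+1$ the unknown $\psi_n$ is the top entry of the last column of $T_{n+1}$, and Parrott's theorem needs two hypotheses: the column block, which is $T_n$, and the row block, which is the matrix $\bigl(\,\tilde g \;\; T_n\,\bigr)$ obtained by prepending to $T_n$ the column $\tilde g=(\phi_{-1}(\lambda),\phi_{-2}(\lambda),\ldots)^{\rm t}$. This row block contains the previously chosen $\psi_0,\ldots,\psi_{n-1}$, so it cannot be ``deduced from $\|H_\phi\|\leq 1$'' as you assert; worse, Parrott's conclusion returns only contractivity of $T_{n+1}$, not of $\bigl(\,\tilde g\;\; T_{n+1}\,\bigr)$, so even your strengthened hypothesis is not reproduced at the next stage. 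Adjoining one more Hankel column $(\phi_{-2},\phi_{-3},\ldots)^{\rm t}$ just pushes the problem back, and the regress stops exactly when \emph{all} shifted Hankel columns are adjoined, i.e.\ when the invariant is that the bordered Hankel matrix $H\bigl(\psi_{n-1}(\lambda),\ldots,\psi_0(\lambda),\phi_{-1}(\lambda),\phi_{-2}(\lambda),\ldots\bigr)$ is a contraction for a.e.\ $\lambda$. That is the invariant the paper's proof iterates on (it produces $\phi_0$ with $\|H(M_{\phi_0},M_{\phi_{-1}},\ldots)\|\leq 1$, then $\phi_1$ with $\|H(M_{\phi_1},M_{\phi_0},M_{\phi_{-1}},\ldots)\|\leq 1$, and so on), and the Hankel rather than Toeplitz bookkeeping is precisely what makes the induction self-sustaining: the new entry sits only in the $(1,1)$ corner, and deleting either the first row or the first column of the bordered matrix returns the previous bordered Hankel, so both Parrott hypotheses coincide with the induction hypothesis and Parrott's conclusion is the induction step. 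If you replace your Toeplitz-section invariant by this bordered-Hankel invariant and keep the rest of your argument (the fiberwise decomposition $\|H_\phi\|=\mathop{\rm ess\,sup}_\lambda\|H_\phi(\lambda)\|$, the measurable selection, and the strong convergence of sections combined with Lemma \ref{Norm of Multiplication Operator}), you obtain a complete proof --- one whose final limiting step is, in fact, recorded more carefully than in the paper.
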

\begin{proof}
From Lemma \ref{Upper bound for Hankel matrix},  
we know that 
$\|H_{\phi}\|\leq {\rm dist}_{\infty}(\phi,H_1)$. 
Without loss of generality 
we assume that $\|H_{\phi}\|=1$.
Using Lemma \ref{Existence of essentially bounded function}, 
we find 
${\phi}_{0}\in L^{\infty}(\T)$ 
such that the norm of the operator 
$H\big (M_{{\phi}_{0}},M_{{\phi}_{-1}},\ldots \big )$ 
is at most 1. 
Therefore, one proves the desired conclusion by repeated use of 
Lemma \ref{Existence of essentially bounded function}.
\end{proof}

\subsection{Nehari's Theorem in \texorpdfstring{$n$}{TEXT} - variables} 
The generalization of Nehari's theorem for $n$ - variables is very similar to that of the 
case of two variables. Therefore we give the details in this case only briefly. The key 
is  the  {\tt D-slice ordering} on $\mathbb{Z}^n,$ which is defined below. 

For a fixed $k\in\mathbb{Z}$, define 
$P_k:=\left\{\left(x_1,\ldots,x_n\right)\in \mathbb Z^n\, |\, x_1+ \cdots +x_n=k \right\}.$ 
The subsets $P_k$ of $\mathbb Z^n$ are disjoint and 
$\bigsqcup_{k\in\mathbb{Z}}P_k=\mathbb{Z}^n.$
\begin{defn}[D-slice ordering for $\Z^n$] \label{D-slice N var}
For $(x_1,\ldots,x_n)\in P_l$ and $(y_1,\ldots,y_n)\in P_m,$ 
\begin{enumerate}
	\item if $l=m,$ then the order between  $(x_1,\ldots,x_n)$ and $(y_1,\ldots,y_n)$ is determined by the lexicographic ordering on $P_l\subseteq \mathbb Z^n$ and    
	\item if $l < m$ (respectively if $l > m$), then $(x_1,\ldots,x_n) < (y_1,\ldots,y_n)$ (respectively $(x_1,\ldots,x_n) > (y_1,\ldots,y_n)$). 
\end{enumerate}
\end{defn}
Define
\[H_1:=\Big\{f:=\!\!\!\!\!\sum\limits_{(m_1,\ldots,m_n)\in A_1}\!\!\!\!\!\!\!\!a_{m_1,\ldots,m_n}z_{1}^{m_1}\cdots z_{n}^{m_n}|f\in L^{\infty}(\T^2)\Big\} \mbox{ and }\]
\[H_2:=\Big\{f:=\!\!\!\!\!\sum\limits_{(m_1,\ldots,m_n)\in A_2}\!\!\!\!\!\!\!\!a_{m_1,\ldots,m_n}z_{1}^{m_1}\cdots z_{n}^{m_n}|f\in L^{\infty}(\T^2)\Big\},\] 
where $A_1:=\{(m_1,\ldots,m_n)\in \Z^n:m_1+\cdots+m_n\geq 0\}$ and $A_2:=\{(m_1,\ldots,m_n)\in \Z^n:m_1+\cdots+m_n<0\}.$ 
The two subspaces $H_1$ and $H_2$ of $L^{\infty}(\T^n)$ are closed and disjoint, moreover 
$L^{\infty}(\T^n)=H_1 \oplus H_2$. 
Let $f$ be a function in $L^{2}(\T^2)$ and 
\begin{eqnarray*}
f(z_1,\ldots, z_n)&=&\sum\limits_{m_1,\ldots, m_n\in\Z}\!\!\!\!\!a_{m_1,\ldots, m_n}z_{1}^{m_1}\cdots z_{n}^{m_n}\\
&=&\sum\limits_{(m_1,\ldots, m_n)\in A_1}\!\!\!\!\!a_{m_1,\ldots, m_n}z_{1}^{m_1}\cdots z_{n}^{m_n} + 
\sum\limits_{(m_1,\ldots, m_n)\in A_2}\!\!\!\!\!a_{m_1,\ldots, m_n}z_{1}^{m_1}\cdots z_{n}^{m_n}
\end{eqnarray*}
be its power series expansion.
Suppose $z_j=\lambda_{j-1} z_1,$ $\lambda_{j-1}\in\mathbb{D}$ for $j=2,\ldots,n$. 
Then
\begin{eqnarray*}
f(z_1,\lambda_1 z_1,\ldots,\lambda_{n-1}z_1)&=&\sum\limits_{k\geq 0}\left(\sum\limits_{m_1+\cdots + m_n=k}
\!\!\!\!\!a_{m_1,\ldots, m_n}\lambda_{1}^{m_2}\cdots \lambda_{n-1}^{m_n}\right)z_{1}^{k}\\
 &+& 
\sum\limits_{k<0}\left(\sum\limits_{m_1+\cdots + m_n=k}
\!\!\!\!\!a_{m_1,\ldots,m_n}\lambda_{1}^{m_2}\cdots \lambda_{n-1}^{m_n}\right)z_{1}^{k}.
\end{eqnarray*}
For each $k\in\mathbb{Z},$ we set 
$f_{k}(\lambda_1,\ldots,\lambda_{n-1}):=\sum\limits_{m_1+\cdots + m_n=k}a_{m_1,\ldots, m_n}\lambda_{1}^{m_2}\cdots \lambda_{n-1}^{m_n}.$ 
For any $\phi\in L^{\infty}(\T^n),$ 
define the multiplication operator 
$M_{\phi}:L^{2}(\T^{n-1})\otimes \ell^2(\Z)\to L^{2}(\T^{n-1})\otimes \ell^2(\Z),$ 
$$M_{\phi}\Big(\sum\limits_{j\in\Z}g_j\otimes e_j\Big):=
\sum\limits_{k\in\Z}\Big(\sum\limits_{q\in\Z}g_q\phi_{q+k}\Big)\otimes e_k,$$ 
where $\phi_j$ satisfies $\phi(z_1,\lambda_1 z_1,\ldots,\lambda_{n-1}z_1)=\sum_{j\in \Z}\phi_j(\lambda_1,\ldots,\lambda_{n-1})z_1^k.$  
Now, we define the Hankel operator $H_{\phi}$ 
corresponding to the function $\phi$ to be the following operator:
\[H_{\phi}=
\left(
\begin{array}{cccc}
	M_{{\phi}_{-1}} & M_{{\phi}_{-2}} & M_{{\phi}_{-3}} & \cdots\\
	M_{{\phi}_{-2}} & M_{{\phi}_{-3}} & M_{{\phi}_{-4}} & \cdots\\
	M_{{\phi}_{-3}} & M_{{\phi}_{-4}} & M_{{\phi}_{-5}} & \cdots\\
	\vdots & \vdots & \vdots & \\ 
	\end{array}
\right).\]
The proof of the following theorem is very similar  to that of Theorem \ref{thm 4.6}, 
therefore we omit the details.
\begin{thm}[Nehari's theorem for $L^{\infty}(\mathbb T^n)$] 	
	If $\phi\in L^{\infty}(\T^n),$ 
	then $\|H_{\phi}\|=\mbox{\rm dist}_{\infty}(\phi,H_1)$.
\end{thm}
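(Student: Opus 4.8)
The plan is to run, verbatim, the three-step machinery behind Theorem \ref{thm 4.6}, with the single parameter $\lambda\in\T$ replaced throughout by the $(n-1)$-tuple $\lambda=(\lambda_1,\ldots,\lambda_{n-1})\in\T^{n-1}.$ The shear substitution $z_j=\lambda_{j-1}z_1$ identifies $\T^n$ with $\T\times\T^{n-1}$ in a measure-preserving way, so that every supremum over $\T^n$ becomes a fibered quantity: a supremum over $\lambda\in\T^{n-1}$ of something attached to the single remaining variable $z_1\in\T.$ This fibering is precisely what lets all the scalar, fiberwise arguments of the two-variable case survive unchanged.

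First I would establish the analogue of Lemma \ref{Norm of Multiplication Operator}, namely $\|M_{\phi}\|=\|\phi\|_{\T^n,\infty}$ for the operator $M_\phi$ on $L^2(\T^{n-1})\otimes\ell^2(\Z)$ defined just above the statement. Writing the matrix of $M_\phi$ in the basis $\{z^i\otimes e_j\}$ ordered by the D-slice ordering of Definition \ref{D-slice N var} exhibits it as a Laurent-type operator whose $(i,j)$ entry is the multiplication operator $M_{\phi_{j-i}}.$ Since the shear substitution gives $\|\phi\|_{\T^n,\infty}=\sup_{\lambda}\sup_{z_1}\bigl|\sum_k\phi_k(\lambda)z_1^k\bigr|,$ one identifies this with the supremum over $\lambda$ of the norms of the scalar Laurent matrices, which in turn equals $\|M_\phi\|$; the computation is word-for-word that of Lemma \ref{Norm of Multiplication Operator}. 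Granting this, the inequality $\|H_{\phi}\|\le\mbox{\rm dist}_{\infty}(\phi,H_1)$ follows exactly as in Lemma \ref{Upper bound for Hankel matrix}: one has $\|H_\phi\|=\|P_{H^{\perp}}M_\phi|_H\|\le\|M_\phi\|=\|\phi\|_{\T^n,\infty},$ and the matrix form of $H_\phi$ shows $H_{\phi-g}=H_{\phi}$ for every $g\in H_1,$ whence $\|H_\phi\|\le\|\phi-g\|_{\T^n,\infty}$ for all such $g.$

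For the reverse inequality I would lift Lemma \ref{Existence of essentially bounded function}: with $f_0,\ldots,f_{N-1}$ and $(g_m)$ now taken in $L^\infty(\T^{n-1})$ and the supremum taken over $\lambda\in\T^{n-1},$ there is an $f_N\in L^\infty(\T^{n-1})$ extending the truncated Toeplitz block while keeping its fiberwise norm at most $1.$ The proof is identical to that of Lemma \ref{Existence of essentially bounded function}: for each fixed $\lambda$ Parrott's theorem furnishes a scalar solution via formula \eqref{All solutions in Parrot's theorem}, and taking $V=0$ one reads off $f_N$ from the measurable, essentially bounded data $Y$ and $Z$ built from square roots of $I-SS^*$ and $I-S^*S.$ With this lemma the main argument mirrors the proof of Theorem \ref{thm 4.6}: after normalizing $\|H_\phi\|=1,$ the negative slices $\phi_{-1},\phi_{-2},\ldots$ are fixed, and repeated application of the lifted lemma produces nonnegative slices $\phi_0,\phi_1,\ldots\in L^\infty(\T^{n-1})$ so that all finite Toeplitz sections built from $(\phi_k)_{k\in\Z}$ have fiberwise norm at most $1$ uniformly in $\lambda.$ The associated bi-infinite Laurent symbol $\psi$ then satisfies $\|\psi\|_{\T^n,\infty}=\|M_\psi\|\le 1,$ and since $\psi$ and $\phi$ share their negative slices we have $\phi-\psi\in H_1,$ giving $\mbox{\rm dist}_{\infty}(\phi,H_1)\le\|\psi\|_{\T^n,\infty}\le 1=\|H_\phi\|.$

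The one point demanding genuine care—and the step I expect to be the main obstacle—is this lifted extension lemma, specifically the measurability and essential boundedness of the Parrott solution $f_N$ as a function of the $(n-1)$-dimensional parameter. However, the argument of Lemma \ref{Existence of essentially bounded function} never uses that the parameter is one-dimensional: it uses only that products, adjoints, inverses, and square roots of matrices whose entries are essentially bounded measurable functions of the parameter are again essentially bounded and measurable, a fact insensitive to the dimension of the underlying torus. Consequently the generalization is routine, which is exactly why the details may be omitted.
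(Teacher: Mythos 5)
Your proposal is correct and is exactly the argument the paper intends: the paper itself omits the proof, stating it is ``very similar'' to that of Theorem \ref{thm 4.6}, and your fiberwise lifting of Lemmas \ref{Norm of Multiplication Operator}, \ref{Upper bound for Hankel matrix}, and \ref{Existence of essentially bounded function} with $\lambda\in\T^{n-1}$ in place of $\lambda\in\T$ is precisely that omitted argument. Your closing observation---that the Parrott-based extension lemma only uses measurability and essential boundedness of matrix entries in the parameter, which is insensitive to the dimension of the torus---is the right justification for why the paper could omit the details.
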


\subsection{CF problem in \texorpdfstring{$\mathbb D^2$}{TEXT} and Nehari's theorem for \texorpdfstring{$L^{\infty}(\T^2)$}{TEXT}}
Fix $p\in\mathbb{C}[Z_1,Z_2]$ to be the polynomial defined by 
	$$p(z_1,z_2)=a_{10}z_1+a_{01}z_2+
	a_{20}z_{1}^{2}+a_{11}z_1z_2+a_{02}z_{2}^2.$$
Denote $\phi(z_1,z_2):=\overline{z}_1^3p(z_1,z_2)=a_{10}\overline{z}_1^2+a_{01}\overline{z}_1^3z_2+a_{20}\overline{z}_1+a_{11}\overline{z}_1^2z_2+a_{02}\overline{z}_1^3z^2_2.$ 
Suppose $p_1(\lambda)=a_{10}+a_{01}\lambda$ 
and $p_2(\lambda)=a_{20}+a_{11}\lambda + a_{02}\lambda^2.$
Then $\|H_{\phi}\|={\rm dist}_{\infty}(\phi,H_1),$ where 
\[H_\phi=
\left(
	\begin{array}{cccc}
		M_{p_2} & M_{p_1}& 0 & \cdots\\
		M_{p_1} & 0 & 0 & \cdots\\
		0 & 0 & 0 & \cdots\\
		\vdots&\vdots&\vdots& \\
	\end{array}
\right)
\]	
Thus, if there exists a holomorphic function 
$q:\D^2\to \C$ with $q^{(I)}(0)=0$ for $|I|\leq 2$ 
such that $\|p+q\|_{\D^2,\infty}\leq 1,$ 
then $\|H_\phi\|\leq \|p+q\|_{\D^2,\infty}$. 
Hence $\|H_\phi\|\leq 1$ is a 
necessary condition for such a $q$ to exist. As we have seen before, this necessary condition, however, is not sufficient.

\section{An alternative proof of the Kor\'{a}nyi--Puk\'{a}nszky Theorem}\label{sec 5}
\label{Koranyi}
We recall the following theorem of Kor\'{a}nyi and Puk\'{a}nszky 
\cite[Corollary, Page 452]{KP}. 
This gives a necessary and sufficient condition 
for the range of a  holomorphic function, defined 
on the polydisc $\mathbb D^n,$ to  
be in the right half plane $H_+.$ 

\begin{thm}[Kor\'{a}nyi--Puk\'{a}nszky Theorem] \label{Koranyi Theorem}
	Suppose the power series 
	$\sum_{\boldsymbol\alpha \in \mathbb{N}_{0}^{n}}a_{\boldsymbol\alpha}\boldsymbol{z}^{\boldsymbol\alpha}$ 
	represents a holomorphic function $f$ 
	on the polydisc $\mathbb D^n,$ then   
	$\Re(f(z))\geq 0$ for all $\VZ\in \mathbb{D}^n$ 
	if and only if the map 
	$\phi:\mathbb{Z}^n\to \mathbb{C}$ 
	defined by 
	\begin{eqnarray*}
		\phi (\boldsymbol\alpha )=\left\{
		\begin{array}{ll}
		      2\Re a_{\boldsymbol\alpha} & \mbox{\rm if }\boldsymbol\alpha =0\\
		      a_{\boldsymbol\alpha} & \mbox{\rm if }\boldsymbol\alpha > 0\\
		      a_{-\boldsymbol\alpha} & \mbox{\rm if }\boldsymbol\alpha < 0\\
		      0 & \mbox{\rm otherwise }\\
		\end{array} 
		\right.
	\end{eqnarray*}
	is positive, that is, the $k\times k$ matrix 
	$\big (\!\!\big ( \phi(\scriptstyle{m_i-m_j}) \big )\!\!\big )$ 
	is non-negative definite 
	for every choice of $k\in \N$ and $m_1, \ldots, m_k\in \mathbb Z^n.$  
	Here $\Re z$ denotes the real part of the complex number $z.$   
\end{thm}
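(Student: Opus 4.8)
The plan is to realise the polydisc by a commuting tuple of unitaries and let the spectral theorem, used exactly as in Theorem \ref{CaseBidisc} and Proposition \ref{spectralthm}, convert positivity of the self-adjoint operator $\Re f$ into non-negative definiteness of the matrix $\big(\!\big(\phi(m_i-m_j)\big)\!\big)$, and conversely. First I would fix the model: let $U_1,\dots,U_n$ be the coordinate bilateral shifts on $\ell^2(\Z^n)$, i.e. $U_j e_m=e_{m+\varepsilon_j}$ where $\{e_m\}_{m\in\Z^n}$ is the standard basis and $\varepsilon_j$ is the $j$-th unit vector. These are commuting unitaries with joint spectrum $\T^n$, and under the Fourier identification $\ell^2(\Z^n)\cong L^2(\T^n)$ the functional calculus $g\mapsto g(U_1,\dots,U_n)$ is multiplication by $g$; in particular, for continuous $g$ one has $g(U)\geq 0$ if and only if $g\geq 0$ on $\T^n$. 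Next I would introduce the dilates $f_r(z):=f(rz)$, $0<r<1$, which are holomorphic on a neighbourhood of $\overline{\D^n}$. Writing $|\alpha|:=\sum_j|\alpha_j|$, a short computation shows that $2\Re f_r=f_r+\overline{f_r}$ has Fourier coefficients $\phi(\alpha)\,r^{|\alpha|}$ (so that $\phi$ is exactly the two-sided Fourier-coefficient sequence of the boundary values of $2\Re f$); hence the matrix of the self-adjoint operator $2\Re f_r(U)=(2\Re f_r)(U)$ in the basis $\{e_m\}$ is the bi-infinite Toeplitz matrix $T_r:=\big(\!\big(\phi(m-m')\,r^{|m-m'|}\big)\!\big)$, which is the Hadamard product of $T_1:=\big(\!\big(\phi(m-m')\big)\!\big)$ with $R_r:=\big(\!\big(r^{|m-m'|}\big)\!\big)$.

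For the forward implication I would argue that if $\Re f\geq 0$ on $\D^n$, then $2\Re f_r\geq 0$ on $\T^n$ (its values there are $2\Re f$ at the points $rz\in\D^n$), so $2\Re f_r(U)\geq 0$ by the spectral theorem. Testing the quadratic form on $v=\sum_k c_k e_{m_k}$ gives $\sum_{k,l}c_k\overline{c_l}\,\phi(m_l-m_k)\,r^{|m_l-m_k|}\geq 0$, and letting $r\to 1$ yields that $\big(\!\big(\phi(m_i-m_j)\big)\!\big)$ is non-negative definite.

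For the converse, suppose every finite matrix $\big(\!\big(\phi(m_i-m_j)\big)\!\big)$ is non-negative definite, so $T_1$ is a positive-definite kernel. Since $R_r=\big(\!\big(\prod_j r^{|m_j-m'_j|}\big)\!\big)$ is the Hadamard product over $j$ of the one-variable Poisson Toeplitz matrices, each positive definite, $R_r$ is positive definite; the Schur product theorem then makes $T_r$ positive definite, and because its entries decay geometrically it defines a bounded operator with $2\Re f_r(U)\geq 0$. By the spectral theorem this operator is multiplication by the continuous symbol $2\Re f_r$, whence $2\Re f_r\geq 0$ pointwise on $\T^n$. Finally, since $\Re f_r$ is pluriharmonic and continuous on $\overline{\D^n}$ it equals the integral of its boundary values against the non-negative product Poisson kernel, so $\Re f_r\geq 0$ on $\D^n$; given $w\in\D^n$, choosing $r\in(\max_j|w_j|,1)$ and evaluating at $w/r\in\D^n$ gives $\Re f(w)=\Re f_r(w/r)\geq 0$.

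The hard part will be the converse, namely turning positivity of the discrete (a priori only formally defined) kernel $T_1$ into pointwise positivity of $\Re f$. The device that makes the spectral theorem applicable is the Hadamard product with the Poisson kernel $R_r$: it simultaneously regularizes $T_1$ into a genuine bounded positive operator and identifies that operator as multiplication by the continuous symbol $2\Re f_r$, after which symbol positivity and the Poisson representation close the argument. Ancillary care is needed to justify the entrywise limit $r\to 1$, the identity $2\Re f_r(U)=(2\Re f_r)(U)$ via $f_r(U)^*=\overline{f_r}(U)$, and the passage from the distinguished boundary $\T^n$ to the interior $\D^n$.
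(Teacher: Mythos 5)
Your proof is correct, but it takes a genuinely different route from the paper's. The paper never works with $\Re f$ directly: it applies the Cayley transform $\chi$, uses the spectral theorem for the nested tuple $(I^{\otimes(n-1)}\otimes B^{*},\ldots,B^{*^{\otimes n}})$ to identify $\|\chi\circ f\|_{\D^n,\infty}$ with the norm of the Toeplitz operator $\mathscr{T}(A_1,\ldots,A_n)$ built from the coefficients of $\chi\circ f$, and then passes from contractivity to positivity by the purely algebraic Schur-complement identity $P_nC_n^{\rm t}P_n^{*}=(I-A_nA_n^{*})\oplus 1$ of Lemma \ref{contractivity vs positivity} (in its operator form, Lemma \ref{Contractivity and Positivity}, which uses the coefficient recursion of Lemma \ref{recurrence 2 variables}), finally matching the resulting block Toeplitz matrices with the Kor\'{a}nyi--Puk\'{a}nszky kernel through the D-slice ordering (Theorem \ref{Matrix phi}). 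You instead realize the KP kernel, damped by $r^{|\cdot|}$, directly as the matrix of the multiplication operator $M_{2\Re f_r}$ on $L^2(\T^n)$, so for you the spectral theorem converts positivity of a self-adjoint operator into pointwise positivity of its symbol, rather than a sup norm into an operator norm; your forward direction is restriction plus a limit, and your converse is Schur's product theorem applied with the Poisson Toeplitz kernels $\big(r^{|i-j|}\big)$ together with the Poisson integral representation of $n$-harmonic functions --- essentially an Abel-summation proof of the relevant Herglotz--Bochner statement, closer in spirit to the original Kor\'{a}nyi--Puk\'{a}nszky argument. Your route is shorter, uniform in $n$, and avoids the Cayley transform, the coefficient recursions and the D-slice bookkeeping entirely; the paper's longer route is designed to expose exactly the equivalence (positivity of the $C$-matrices versus contractivity of the $A$-matrices) that ties Theorem \ref{Koranyi Theorem} to the CF machinery of Section \ref{CFP} and, in one variable, recovers the classical solution of the CF problem --- a byproduct your argument does not give. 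Two minor points: your Fourier identification uses $\phi(\alpha)=\overline{a_{-\alpha}}$ for $\alpha<0$, the Hermitian convention, thereby silently correcting a typo in the stated theorem (positivity forces $\phi(-\alpha)=\overline{\phi(\alpha)}$, so the conjugate is needed); and the boundedness of the operator with matrix $T_r$ is best justified by summability of its entries (Cauchy estimates give $|\phi(\alpha)|r^{|\alpha|}\leq C(r/s)^{|\alpha|}$ for $r<s<1$), which exhibits it as multiplication by the continuous function $2\Re f_r$, rather than by ``geometric decay'' alone.
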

\noindent 
We call this function 
$\phi,$ the Kor\'{a}nyi--Puk\'{a}nszky function 
corresponding to the coefficients 
$(a_{\boldsymbol\alpha})_{\boldsymbol\alpha\in \mathbb{N}_0^n}.$

\subsection{The planar Case}
Suppose $f$ is holomorphic mapping from unit disc $\D$ into $H_+.$ 
Without loss of generality we can assume $f(0)=1/2.$
Consider the Cayley map 
$\chi :H_{+}\to \mathbb{D}$ defined by 
\[\chi(z)=\frac{1-z}{1+z},\] 
which is a bi-holomorphism. 
Suppose $\chi \circ f$ 
has the power series 
$\sum_{n=1}^{\infty}a_nz^n$ 
mapping $\mathbb{D}$ into $\mathbb{D}$. Then
\begin{equation}
 	 f(z) = \frac{1+\chi \circ f(z)}{1-\chi \circ f(z)}
 	=2\left(c_0+\sum_{n=1}^{\infty}c_nz^n\right),  \label{c_n's}
\end{equation}
where $2c_n = f^{(n)}(0)/n!.$ 
The exact relationship between the co-efficients 
$c_n$  and $a_n$ are obtained in the lemma below. 
In this section, we set $c_0=1/2,$ wherever it occurs. 
\begin{lem} \label{recursive coefficients}
	The coefficient $c_n$ in equation \eqref{c_n's} 
	is given by 
	$a_n+\sum\limits_{\substack{j=1}}^{n-1}a_jc_{n-j},$ 
	 $n \geq 1$.
\end{lem}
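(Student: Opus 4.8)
The plan is to read the recursion directly off the defining identity \eqref{c_n's}, by clearing the denominator and comparing Taylor coefficients. Write $g:=\chi\circ f$, so that $g(z)=\sum_{m\ge 1}a_mz^m$ has no constant term (it vanishes at the origin), and recall from \eqref{c_n's} that $f(z)=2\sum_{n\ge 0}c_nz^n$ with the convention $c_0=\tfrac12$. Since $g$ maps $\D$ into $\D$, we have $|g|<1$ and hence $1-g$ is zero-free, so the relation $f=\tfrac{1+g}{1-g}$ is equivalent, as an identity of holomorphic functions (convergent power series) on $\D$, to
\[
f(z)\bigl(1-g(z)\bigr)=1+g(z).
\]

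First I would substitute the two series into this identity and carry out the Cauchy product on the left-hand side. The coefficient of $z^n$, for $n\ge 1$, on the left is
\[
2c_n-2\sum_{j=1}^{n}a_jc_{n-j},
\]
whereas on the right it is simply $a_n$. Equating the two gives the relation $2c_n-2\sum_{j=1}^{n}a_jc_{n-j}=a_n$ for every $n\ge 1$.

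The one point needing care, and the only place the normalization is used, is the top term $j=n$ of the convolution, namely $a_nc_0=\tfrac12 a_n$. Splitting it off yields
\[
2c_n-2\sum_{j=1}^{n-1}a_jc_{n-j}-a_n=a_n,
\]
so the two copies of $a_n$ combine and, after dividing by $2$, produce exactly
\[
c_n=a_n+\sum_{j=1}^{n-1}a_jc_{n-j},
\]
which is the assertion. I do not expect a genuine obstacle: the argument is a finite bookkeeping of one Cauchy product, and the choice $c_0=\tfrac12$ is arranged precisely so that the stray $a_n$ cancels and the recursion closes with the clean coefficient $1$ on $a_n$.
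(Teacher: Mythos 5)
Your proof is correct and follows essentially the same route as the paper: clear the denominator in \eqref{c_n's} and compare Cauchy-product coefficients. The only cosmetic difference is that the paper first uses the geometric series to recognize $1+\sum_{n\ge 1}c_nz^n=\tfrac{1}{1-\chi\circ f}$ and then expands $\bigl(1+\sum_{n\ge 1}c_nz^n\bigr)\bigl(1-\sum_{n\ge 1}a_nz^n\bigr)=1$, whereas you work directly with $f\cdot(1-\chi\circ f)=1+\chi\circ f$ and cancel the stray $a_n$ coming from the $j=n$ term, which is where the normalization $c_0=\tfrac12$ enters in both arguments.
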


\begin{proof}
Consider the expression 
\[\chi \circ f(z) =2\left(c_0+\sum_{n=1}^{\infty}c_nz^n\right) 
= 2\left(\frac{1}{2}+f(z)+f(z)^2+f(z)^3+\cdots \right).\]
Rewriting, we get
\[ \frac{1}{1-f(z)} = 1 + \sum\limits_{n = 1}^{\infty}c_nz^n.\]
Hence, we have 
\[\left(1+\sum_{n=1}^{\infty}c_n z^n\right)
\left(1-\sum_{n=1}^{\infty}a_n z^n\right) =1.\]
A comparison of the coefficients completes the verification. 
\end{proof}

Let $\phi$ denote the Kor\'{a}nyi--Puk\'{a}nszky function 
corresponding to the coefficients  
$(c_n)_{n=0}^{\infty}.$

 \textbf{Matrix of $\phi$ :} 
The matrix $\left(\phi(j-k)\right)_{j,k}$  is given by
\begin{equation}
	\bordermatrix{
		~ & \cdots & -1 & 0 & 1 & \cdots\cr
		\vdots & & \vdots & \vdots & \vdots & \cr
		-1 & \cdots & 1 & \overline{c}_1 & \overline{c}_2 & \cdots\cr
		0 & \cdots & c_1 & 1 & \overline{c}_1 & \cdots \cr
		1 & \cdots & c_2 & c_1 & 1 & \cdots \cr
		\vdots &  & \vdots & \vdots & \vdots &  \cr}. \label{Matrix of phi}
\end{equation}
 
For each $n\in \mathbb N,$ let $C_n, A_n$ and $P_n$ denote the  matrices 
\[(C_n:=)\,\, \left(
{
\begin{array}{ccccccc}
	1 & \overline{c}_1 & \overline{c}_2 & \cdots & \overline{c}_n \\
	c_1 & 1 & \overline{c}_1 & \cdots & \overline{c}_{n-1}\\ 
	c_2 & c_1 & 1 & \cdots & \overline{c}_{n-2}\\
	\vdots & \vdots & \vdots & \ddots & \vdots \\
	c_n & c_{n-1} & c_{n-2} & \cdots & 1 \\
\end{array} }
\right),
\,(A_n:=)\,\, \left(
{
\begin{array}{ccccc}
	a_1 & a_2 & a_3 &\cdots & a_n \\
	0 & a_1 & a_2 & \cdots & a_{n-1}\\ 
	0 & 0 & a_1 & \cdots & a_{n-2}\\
	\vdots & \vdots & \vdots & \ddots & \vdots \\
	0 & 0 & 0 & \cdots & a_1 \\
\end{array} }
\right)
\]
and 
\[(P_n:=)\,\, \left(
{
\begin{array}{ccccc}
	1 & -a_1 & -a_2 &\cdots & -a_n \\
	0 & 1 & -a_1 & \cdots & -a_{n-1}\\ 
	\vdots & \vdots & \ddots & \ddots & \vdots \\
	0 & 0 & 0 & \cdots & -a_1 \\
	0 & 0 & 0 & \cdots & 1 \\
\end{array} }
\right)
\]
respectively.
 
\begin{lem}\label{contractivity vs positivity}
	For all $n\in\mathbb{N},$ $P_n C_{n}^{\rm t} P_{n}^{*}=(I-A_n A_{n}^{*})\oplus 1.$ 
	\end{lem}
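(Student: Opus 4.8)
The plan is to reduce everything to the commutative algebra of upper-triangular Toeplitz matrices, where the recursion of Lemma \ref{recursive coefficients} becomes a clean inversion statement. First I would observe that the Hermitian Toeplitz matrix $C_n^{\rm t}$ splits as $C_n^{\rm t} = \widetilde{C}_n + \widetilde{C}_n^{*} - I$, where $\widetilde{C}_n$ is the $(n+1)\times(n+1)$ upper-triangular Toeplitz matrix whose symbol is $\widetilde{C}(z) = 1 + \sum_{m\geq 1} c_m z^m$ (here the diagonal value is $c_0 = 1$, since $\phi(0) = 2\Re c_0 = 1$). Likewise, $P_n$ is the upper-triangular Toeplitz matrix with symbol $q(z) = 1 - \sum_{m\geq 1} a_m z^m$, and $A_n$ is the strictly upper-triangular part of the same data.

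The key step is the remark that the assignment sending a power series to its associated $(n+1)\times(n+1)$ upper-triangular Toeplitz matrix is an algebra homomorphism out of $\mathbb{C}[[z]]/(z^{n+1})$: products of symbols correspond to products of matrices. Lemma \ref{recursive coefficients} is precisely the statement that $\big(1 + \sum_{m\geq1} c_m z^m\big)\big(1 - \sum_{m\geq1} a_m z^m\big) = 1$, i.e.\ $\widetilde{C}(z)\,q(z) = 1$. Hence $P_n \widetilde{C}_n = \widetilde{C}_n P_n = I$, so that $P_n = \widetilde{C}_n^{-1}$ and, taking adjoints, $\widetilde{C}_n^{*} P_n^{*} = I$. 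Substituting the splitting and invoking these two relations collapses the triple product:
\[
P_n C_n^{\rm t} P_n^{*} = P_n\big(\widetilde{C}_n + \widetilde{C}_n^{*} - I\big)P_n^{*} = P_n^{*} + P_n - P_n P_n^{*} = I - (I - P_n)(I - P_n^{*}).
\]

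It then remains to identify $N := I - P_n$. By construction $N$ is strictly upper triangular, with vanishing last row and vanishing first column, and its entries $N_{jk} = a_{k-j}$ (for $k>j$) are exactly those of $A_n$ placed in the block occupying rows $0,\ldots,n-1$ and columns $1,\ldots,n$. A direct computation of $NN^{*}$ shows that its last row and last column vanish while its leading $n\times n$ block equals $A_n A_n^{*}$; that is, $NN^{*} = A_n A_n^{*}\oplus 0$. Combining this with the displayed identity yields $P_n C_n^{\rm t} P_n^{*} = I - NN^{*} = (I - A_n A_n^{*})\oplus 1$, as claimed. I expect the only genuine subtlety to lie in the inversion step: one must be sure that truncating to size $n+1$ does not spoil $P_n \widetilde{C}_n = I$, which is exactly what the homomorphism property guarantees, since $\widetilde{C}(z)q(z)$ equals $1$ identically and so has no higher-order tail to be lost under truncation. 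The index bookkeeping in the final block computation, while routine, must be tracked carefully so as to see the summand $\oplus 1$ emerge from the annihilated last row and column of $N$.
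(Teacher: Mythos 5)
Your proof is correct, but it takes a genuinely different route from the paper's. The paper argues by induction on $n$: it peels off the last row and column of $P_n$ and $C_n^{\rm t}$, uses Lemma \ref{recursive coefficients} in the form $P_{n-1}\tilde{C}_n+\tilde{P}_n=0$ (with $\tilde{P}_n,\tilde{C}_n$ column vectors) to kill the off-diagonal blocks, and then matches the remaining rank-one correction $\tilde{P}_n\tilde{C}_n^{*}P_{n-1}^{*}$ against the difference between $I-A_nA_n^{*}$ and $(I-A_{n-1}A_{n-1}^{*})\oplus 1$. You avoid induction altogether: the splitting $C_n^{\rm t}=\widetilde{C}_n+\widetilde{C}_n^{*}-I$ into triangular Toeplitz parts, together with the fact that truncated power series map homomorphically onto upper-triangular Toeplitz matrices, converts Lemma \ref{recursive coefficients} into the single inversion statement $P_n\widetilde{C}_n=\widetilde{C}_nP_n=I$, after which the triple product collapses to $I-(I-P_n)(I-P_n^{*})$ and the block computation $(I-P_n)(I-P_n)^{*}=A_nA_n^{*}\oplus 0$ finishes the argument. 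I checked the two delicate points and both are handled correctly: truncation does not spoil the inversion precisely because $\widetilde{C}(z)q(z)=1$ holds exactly, with no higher-order tail, and the vanishing last row and first column of $I-P_n$ are exactly what produce the summand $\oplus\,1$. (One cosmetic slip: you write ``$c_0=1$'', whereas in the paper's convention $c_0=1/2$ and the diagonal entry is $\phi(0)=2\Re c_0=1$; your mathematics uses the correct value.) Your approach buys conceptual clarity—the lemma is exposed as a purely formal consequence of $\widetilde{C}=(1-\chi\circ f)^{-1}$—and it transfers verbatim to the operator-valued version, Lemma \ref{Contractivity and Positivity}, since the symbol calculus works unchanged for power series with commuting operator coefficients, which is exactly the hypothesis the paper invokes there when it says the inductive proof can be ``adapted''. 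What the paper's induction buys in exchange is that it is entirely elementary and self-contained, requiring no identification of the upper-triangular Toeplitz algebra with $\mathbb{C}[z]/(z^{n+1})$.
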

\begin{proof}
We prove the result by induction on $n$. 
The case $n=1$ is trivial. 
Assume the result is valid for $n-1,$  $n>1$. 
For each $n\in \N,$ let  
\[\tilde{P}_n:=\left(-a_n,-a_{n-1},\ldots ,-a_1\right)^{\rm t} 
\mbox{ \rm and } 
\tilde{C}_n:=\left(c_n,c_{n-1},\ldots ,c_1\right)^{\rm t}.\] 
The verification of the identity   
\[P_nC_n^{\rm t}P_{n}^{*} =\left(
\begin{array}{cc}
	P_{n-1} & \tilde{P}_n\\
	0 & 1\\
\end{array}
\right) 
\left(
\begin{array}{cc}
	C_{n-1}^{\rm t} & \tilde{C}_n\\
	\tilde{C}_{n}^{*} & 1\\
\end{array}
\right) 
\left(
\begin{array}{cc}
	P_{n-1}^{*} & 0\\
	\tilde{P}_{n}^{*} & 1\\
\end{array}
\right)\]
is easy. Hence 
\[P_nC_{n}^{\rm t}P_{n}^{*} = 
\left(
\begin{array}{cc}
	P_{n-1}C_{n-1}^{\rm t}P_{n-1}^{*} + \tilde{P}_n\tilde{C}_{n}^{*}P_{n-1}^{*} + 
	\tilde{P}_{n}^{*}\left(P_{n-1}\tilde{C}_n+
	\tilde{P}_n\right) & P_{n-1}\tilde{C}_n + \tilde{P}_n\\
	\left(P_{n-1}\tilde{C}_n +\tilde{P}_n\right)^* & 1\\
\end{array} 
\right).\]
From Lemma \ref{recursive coefficients}, 
we have the identity 
$P_{n-1}\tilde{C}_n + \tilde{P}_n = 0$ 
and therefore we conclude that
\[P_nC_n^{\rm t}P_{n}^{*}=
\left(
\begin{array}{cc}
	P_{n-1}C_{n-1}^{\rm t}P_{n-1}^{*}+ \tilde{P}_n\tilde{C}_{n}^{*}P_{n-1}^{*} & 0\\
	0 & 1\\
\end{array}
\right).
\]
Now, 
\[\tilde{P}_n \tilde{C}_{n}^{*}P_{n-1}^{*}
=\left(
\begin{array}{c}
	-a_n \\
	\vdots \\
	-a_1\\
\end{array}
\right)
\left(
\begin{array}{cccc}
	\overline{c}_n-\sum\limits_{i=1}^{n-1}a_ic_{n-i}, & 
	\overline{c}_{n-1}-\sum\limits_{i=1}^{n-2}a_ic_{n-i}, & 
	\cdots, & \overline{c}_1\\
\end{array}
\right).
\]
From Lemma \ref{recursive coefficients}, 
we get
\[\tilde{P}_n\tilde{C}^{*}_{n}P_{n-1}^{*}
=\left(
\begin{array}{c}
	-a_n \\
	\vdots \\
	-a_1\\
\end{array}
\right)
\left(
\begin{array}{ccc}
	\overline{a}_n  & \cdots & \overline{a}_1\\
\end{array}
\right)=
\left(-a_{n-i}\overline{a}_{n-j}\right)_{i,j=0}^{n-1}.\]
Since
\[I-A_k A^{*}_{k} = 
\left(
\begin{array}{cccc}
	1-\sum\limits_{j=1}^{k}|a_j|^2 & 
	-\sum\limits_{j=2}^{k}a_j\overline{a}_{j-1} & 
	\cdots & -a_k\overline{a}_1 \\
	-\sum\limits_{j=2}^{k}\overline{a}_ja_{j-1} & 
	1-\sum\limits_{j=1}^{k-1}|a_j|^2 & \cdots & 
	-a_{k-1}\overline{a}_1\\
	\vdots & \vdots & \ddots & \vdots \\
	-a_1\overline{a}_{k} & 
	-a_1\overline{a}_{k-1} & 
	\cdots & 1-|a_1|^2 \\ 
\end{array}
\right),\]
$I-A_nA_{n}^{*} = \left((I-A_{n-1}A_{n-1}^*)\oplus 1\right) + 
\left(-a_{n-j}\overline{a}_{n-l}\right)_{1\leq j,l\leq {k-1}}.$
Thus
$I-A_nA_{n}^{*} = P_{n-1}C_{n-1}^{\rm t}P_{n-1}^{*} + 
\tilde{P}_n\tilde{C}_nP_{n-1}^{*},$
the proof is complete.
\end{proof}
An immediate corollary to 
Lemma \ref{contractivity vs positivity} 
is the following proposition.

\begin{prop}\label{Reason4KP}
	The matrix $C_n$ is non-negative definite 
	if and only if $\|A_n\|\leq 1$.
\end{prop}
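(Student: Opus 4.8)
The plan is to obtain Proposition \ref{Reason4KP} as an immediate corollary of Lemma \ref{contractivity vs positivity}. That lemma supplies the identity $P_n C_n^{\rm t} P_n^* = (I - A_n A_n^*)\oplus 1$, which already contains all the content; what remains is only to track how each transformation appearing in it affects non-negative definiteness. I would organize the argument as a chain of three equivalences linking $C_n\geq 0$ to $\|A_n\|\leq 1$.

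First I would record that $P_n$ is upper triangular with $1$'s on the diagonal, hence invertible, so that $X\mapsto P_n X P_n^*$ is a congruence. A congruence by an invertible matrix preserves non-negative definiteness in both directions, so $C_n^{\rm t}$ is non-negative definite if and only if $P_n C_n^{\rm t} P_n^* = (I - A_n A_n^*)\oplus 1$ is. Since adjoining the positive direct summand $1$ changes nothing, this holds if and only if $I - A_n A_n^*\geq 0$. Next I would pass from $C_n^{\rm t}$ back to $C_n$: the matrix $C_n$ is a Hermitian Toeplitz matrix (its $(i,j)$ and $(j,i)$ entries are complex conjugates), so $C_n^{\rm t}=\overline{C_n}$. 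Complex conjugation preserves non-negative definiteness, since $v^* C_n v\geq 0$ for all $v$ forces $\overline{v}^{\,*}\,\overline{C_n}\,\overline{v}=\overline{v^* C_n v}\geq 0$, and $\overline v$ ranges over all vectors as $v$ does; thus $C_n\geq 0$ if and only if $C_n^{\rm t}\geq 0$.

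Finally I would translate the definiteness of $I - A_n A_n^*$ into a norm bound: $I - A_n A_n^*\geq 0$ is equivalent to $A_n A_n^*\leq I$, hence to $\|A_n A_n^*\|\leq 1$, hence to $\|A_n\|^2\leq 1$, i.e. $\|A_n\|\leq 1$. Chaining the three equivalences yields the proposition. I do not expect any genuine obstacle here, as the substantive computation has already been carried out in Lemma \ref{contractivity vs positivity}; the only points meriting a word of justification are the invariance of semidefiniteness under congruence by an invertible matrix and under complex conjugation, both elementary. The single mildly delicate bookkeeping issue is keeping the transpose in $C_n^{\rm t}$ (rather than the adjoint) straight, which is precisely what the conjugation step handles.
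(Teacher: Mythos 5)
Your proof is correct and takes essentially the same route as the paper, which presents Proposition \ref{Reason4KP} precisely as an immediate corollary of the identity $P_n C_n^{\rm t} P_n^* = (I-A_nA_n^*)\oplus 1$ from Lemma \ref{contractivity vs positivity}. The paper leaves the bookkeeping implicit, and your three equivalences (congruence by the invertible $P_n$, conjugation of the Hermitian Toeplitz matrix to handle the transpose, and the translation of $I-A_nA_n^*\geq 0$ into $\|A_n\|\leq 1$) are exactly the details being suppressed there.
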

Since $\chi \circ f(=g, \mbox{ say})$ is a holomorphic map 
from $\D$ to $\D,$ therefore 
the multiplication operator $M_g$ on $L^2(\T)$ has the property that 
$\|M_g\|=\|g\|_{\D,\infty}$ (see \cite[Theorem 13.14]{NY}). 
Writing the matrix for $M_g$ with respect to the basis 
$\{\ldots,z^{-2},z^{-1},1,z^1,z^2,\ldots\},$ 
we conclude that $M_g$ is a contraction if and only if 
$A_n$ is a contraction for each $n\in \N.$ 
Using Proposition \ref{Reason4KP} together with 
the equality $\|M_g\|=\|g\|_{\D,\infty},$ 
we see that  the function $f$ maps the unit disc $\D$ 
into the right half plane $H_+$ if and only if 
$C_n$ are non-negative definite for each $n\in \N.$ 
Thus we recover the solution to the \CF problem in one variable. 

\subsection{The case of several variables}
In this subsection, all the computations are given for the case of $n=2$ only. 
These computations are easily seen to work equally well, using the {\tt D-slice ordering} on $\mathbb{Z}^n,$  in the case of  an arbitrary $n\in \mathbb N.$ The details are briefly indicated at the end in Subsection \ref{subsec5.3} 

Suppose $f$ is holomorphic mapping from bidisc $\D^2$ into $H_+.$ 
Without loss of generality, we assume that $f(\boldsymbol 0)=1/2.$ As before, let $\chi:\mathbb H \to \mathbb D$ be the Cayley map and 
\[\chi \circ f(\boldsymbol z)=\sum\limits_{\substack{{m,n=0}}}^{\infty}a_{mn}z_{1}^{m}z_{2}^{n}\]
be the power series expansion of the function $\chi \circ f.$
Thus $\chi \circ f$ maps $\mathbb{D}^2$ into $\mathbb{D}$ and $a_{00}=0$. With the understanding that $c_0=1/2,$ 
we have 
\begin{equation*}
 	 f(\boldsymbol z) = \frac{1+\chi \circ f(\boldsymbol z)}{1-\chi \circ f(\boldsymbol z)}
 	=2\left(c_{00}+\sum_{m,n=1}^{\infty}c_{mn}z_{1}^{m}z_{2}^{n}\right).
\end{equation*}
Let $\phi$ be the Kor\'{a}nyi--Puk\'{a}nszky function 
corresponding to the coefficients $(c_{mn}).$ 
The following theorem describes 
the  function $\phi$ 
with respect to the D-slice ordering on $\mathbb Z^2.$
 
\begin{thm}\label{Matrix phi}
Let $(c_{mn})_{m,n\in\N_0}$ be an infinite array of complex numbers. 
The matrix of the  Kor\'{a}nyi--Puk\'{a}nszky function 
$\phi,$ in the D-slice ordering,   
corresponding to this array is of the form   
$$\bordermatrix{
	~ & \cdots & P_{-1} & P_0 & P_1 & \cdots\cr
	\vdots &  & \vdots & \vdots & \vdots & \cr
	P_{-1} & \cdots & I & C_1^{*} & C_2^{*} & \cdots\cr
	P_{0} & \cdots & C_1 & I & C_1^{*} & \cdots \cr
	P_1 & \cdots & C_2 & C_1 & I & \cdots \cr
	\vdots &  & \vdots & \vdots & \vdots &  \cr},$$
where $C_n:=c_{n0}I+c_{n-1,1}B^{*}+\cdots +c_{0n}{B^*}^n,$ 
$n\in \mathbb N$ and $B$ is the bilateral shift on $\ell^2(\Z).$ 
\end{thm}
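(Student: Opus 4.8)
The plan is to compute directly the matrix $\big(\!\big(\phi(m_i-m_j)\big)\!\big)$ of the Kor\'anyi--Puk\'anszky function attached to the array $(c_{mn})$, reading off its block structure from the D-slice ordering (Definition \ref{D-slice}). First I would organize $\Z^2=\bigsqcup_{k}P_k$ according to the slices $P_k=\{(x,y):x+y=k\}$, so that the index set of the matrix breaks into the ordered family of slices; within each slice the lexicographic order furnishes an identification of $P_k$ with $\Z$, and hence of $\ell^2(P_k)$ with $\ell^2(\Z)$. Under this identification the full matrix of $\phi$ becomes a block matrix whose $(l,m)$ block is the operator on $\ell^2(\Z)$ with entries $\phi(\mu-\nu)$, $\mu\in P_l$, $\nu\in P_m$.

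The decisive structural observation is that $\phi(\mu-\nu)$ depends on $\mu,\nu$ only through the difference $\mu-\nu$, and that $\mu-\nu$ lies in $P_{l-m}$ whenever $\mu\in P_l$, $\nu\in P_m$. Consequently the $(l,m)$ block depends only on $n:=l-m$; this is precisely the Laurent--Toeplitz pattern displayed in the statement, in which each block diagonal is constant. It then remains to evaluate the block for each fixed $n$. For $n=0$ the relevant differences are the points $(t,-t)\in P_0$: the value at $t=0$ is $\phi(\boldsymbol 0)=2\Re c_0=1$, while for $t\neq 0$ the point $(t,-t)$ has coordinates of opposite sign and so falls under the ``otherwise'' clause of the Kor\'anyi--Puk\'anszky function of Theorem \ref{Koranyi Theorem}, whence $\phi$ vanishes. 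Thus the diagonal block is the identity $I$.

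For $n>0$ the admissible differences are the points of $P_n$, namely $(n-s,s)$, $s\in\Z$; here $\phi$ is nonzero exactly on the lattice points lying in $\N_0^2$, i.e.\ for $0\leq s\leq n$, with value $c_{n-s,s}$. As $\mu-\nu$ sweeps through these $n+1$ points, the corresponding entries populate $n+1$ distinct diagonals of the block, and after the identification of each slice with $\ell^2(\Z)$ these diagonals are carried by the powers $(B^*)^0,(B^*)^1,\ldots,(B^*)^n$ of the bilateral shift. Collecting coefficients gives the block $\sum_{s=0}^{n}c_{n-s,s}(B^*)^s=C_n$, which is the claimed sub-diagonal entry; the blocks above the diagonal ($n<0$) then emerge as $C_{|n|}^{*}$ from the Hermitian symmetry $\phi(-\boldsymbol\alpha)=\overline{\phi(\boldsymbol\alpha)}$ of the Kor\'anyi--Puk\'anszky function.

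The one genuinely delicate point, and where I would be most careful, is the bookkeeping that matches the $n+1$ nonzero diagonals of each block to the correct powers of $B^*$: this forces a definite choice of how the lexicographic order parametrizes each slice $P_k$ by $\Z$, together with a definite convention for the bilateral shift $B$, so that raising the slice index by one and shifting the intra-slice parameter are compatibly oriented. Once this identification is pinned down, the appearance of $C_n$ rather than its transpose in the sub-diagonal blocks is forced, the symmetry argument handling the super-diagonal blocks is immediate, and the remainder is a routine verification.
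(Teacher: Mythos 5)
Your proposal is correct and follows essentially the same route as the paper: a direct entry-by-entry computation of the matrix of $\phi$ in the D-slice ordering, organizing it as a block Laurent matrix and identifying each block as a polynomial in powers of $B^*$. The only cosmetic differences are that you obtain the blocks above the main diagonal from the Hermitian symmetry $\phi(-\boldsymbol\alpha)=\overline{\phi(\boldsymbol\alpha)}$ while the paper computes the case of negative intra-slice index difference directly, and that your explicit flagging of the slice-parametrization and shift-convention bookkeeping is, if anything, more careful than the paper's own treatment of that point.
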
 
\begin{proof}
With respect to the D-slice ordering on $\mathbb{Z}^2,$ 
the matrix corresponding to the function $\phi$ 
is a doubly infinite block matrix.  
The $(k,n)$ element in the $(l,m)$ block in this matrix,  
which is of the form $\phi\left((k,-k+l)-(n,-n+m)\right),$ 
computed below separately: 
\begin{enumerate}
\item[${\bf{k-n<0:}}$	] The quantity $\phi\left((k,-k+l)-(n,-n+m)\right)$ is non-zero 
	only if $k-n \geq l-m.$ 
	Hence if $l\geq m,$ 
	then $\phi\left((k,-k+l)-(n,-n+m)\right)=0.$ 
	Now, assume $l<m.$  
	In this case, the possible values 
	for $k-n$ are $l-m,l-m+1,\ldots,-1,$
	otherwise $\phi\left((k,-k+l)-(n,-n+m)\right) = 0.$ 
	For $p\in\{0,1,\ldots,-l+m-1\}$ and $k-n=l-m+p,$   
	we have
	\[\phi\left((k,-k+l)-(n,-n+m)\right)=\overline{c}_{m-l-p,p}.\]
\item[${\bf{k-n=0:}}$]
		\begin{eqnarray*}
		\phi (0,l-m)=\left\{
		\begin{array}{ll}
 		     c_{0,l-m} & \mbox{ if } l\geq m\\
		      \overline{c}_{0,m-l} & \mbox{ if } l< m\\
	\end{array} 
	\right.
	\end{eqnarray*}
\item[${\bf{k-n>0:}}$] 
	The quantity $\phi\left((k,-k+l)-(n,-n+m)\right)$ 
	is non-zero only if $k-n \leq l-m$. 
	Hence if $l\leq m,$ 
	then $\phi\left((k,-k+l)-(n,-n+m)\right)=0$. 
	Now, assume $l>m$. 
	In this case, the possible values 
	for $k-n$ are $l-m,l-m-1,\ldots,1,$ otherwise 
	$\phi\left((k,-k+l)-(n,-n+m)\right)=0.$ 
	For $p\in\{0,1,\ldots,l-m-1\}$ and $k-n=l-m-p,$ 
	we have
	\[\phi\left((k,-k+l)-(n,-n+m)\right)=c_{l-m-p,p}.\]
\end{enumerate}
Therefore, the $(l,m)$ block $\phi(l,m)$ in the matrix of $\phi$ 
 is of the form  
$$
\phi(l,m) = \begin{cases} C_{m-l}^{*}  &\mbox{if } l < m \\ 
I &  \mbox{if } m=l\\
C_{l-m} & \mbox{if } l>m. \\
\end{cases}
$$
Hence the block matrix of the  Kor\'{a}nyi--Puk\'{a}nszky function 
$\phi,$ in the D-slice ordering, corresponding to 
the array $(c_{mn})$ takes the form 
\[\bordermatrix{
	~ & \cdots & P_{-1} & P_0 & P_1 & \cdots\cr
	\vdots &  & \vdots & \vdots & \vdots & \cr
	P_{-1} & \cdots & I & C_1^{*} & C_2^{*} & \cdots\cr
	P_{0} & \cdots & C_1 & I & C_1^{*} & \cdots \cr
	P_1 & \cdots & C_2 & C_1 & I & \cdots \cr
	\vdots &  & \vdots & \vdots & \vdots &  \cr}.\]
\end{proof}

\begin{lem} \label{recurrence 2 variables}
For all $n\in \mathbb N,$ setting 
$A_n:=a_{n0}I+a_{n-1,1}B^*+\cdots +a_{0n}{B^*}^n,$ $C_n=c_{n0}I+c_{n-1,1}B^*+ \cdots +c_{0n}{B^*}^n,$ 
we have 
		$$C_n=A_n+\sum\limits_{\substack{j=1}}^{n-1}A_jC_{n-j}.$$
\end{lem}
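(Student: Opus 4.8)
The plan is to reduce the operator recurrence to the scalar generating-function identity that already underlies Lemma \ref{recursive coefficients}, and then to transport it to the operators $A_n,C_n$ via the polynomial functional calculus for the single operator $B^*$.

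First I would record the generating-function form of the Cayley transform. Writing $A(\boldsymbol z):=\chi\circ f(\boldsymbol z)=\sum_{m,n}a_{mn}z_1^mz_2^n$, which has no constant term since $a_{00}=0$, and $C(\boldsymbol z):=\sum_{(m,n)\neq(0,0)}c_{mn}z_1^mz_2^n$, the relation $f=\tfrac{1+A}{1-A}$ together with $f=1+2C$ (using $c_{00}=\tfrac12$) yields, by exactly the computation in the proof of Lemma \ref{recursive coefficients}, the identity
$$C=A+AC$$
as formal power series in $z_1,z_2$ with vanishing constant term.

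Next I would pass to the diagonal slice $z_2=\lambda z_1$. Under this substitution a monomial $z_1^mz_2^n$ becomes $\lambda^nz_1^{m+n}$, so the coefficient of $z_1^N$ in $A(z_1,\lambda z_1)$ is precisely $\hat A_N(\lambda):=\sum_{k=0}^N a_{N-k,k}\lambda^k$, and the coefficient of $z_1^N$ in $C(z_1,\lambda z_1)$ is $\hat C_N(\lambda):=\sum_{k=0}^N c_{N-k,k}\lambda^k$. Both of these series in $z_1$ begin at degree $1$ (again because $a_{00}=0$ and the term $c_{00}$ has been separated out), so the Cauchy product $A(z_1,\lambda z_1)\,C(z_1,\lambda z_1)$ starts at $z_1^2$, and comparing the coefficient of $z_1^n$ on the two sides of $C=A+AC$ gives the polynomial identity
$$\hat C_n(\lambda)=\hat A_n(\lambda)+\sum_{j=1}^{n-1}\hat A_j(\lambda)\,\hat C_{n-j}(\lambda),$$
with the sum running over $1\le j\le n-1$.

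Finally I would evaluate this identity at $\lambda=B^*$. Since $A_n=\hat A_n(B^*)$ and $C_n=\hat C_n(B^*)$ are all polynomials in the single operator $B^*$, the evaluation map $p(\lambda)\mapsto p(B^*)$ is a unital algebra homomorphism from $\mathbb C[\lambda]$ into $\mathcal B(\ell^2(\mathbb Z))$; in particular it is additive and multiplicative, and the operators involved pairwise commute. Applying it to the displayed polynomial identity therefore produces
$$C_n=A_n+\sum_{j=1}^{n-1}A_j\,C_{n-j},$$
which is the assertion. The only points requiring genuine care are the bookkeeping of constant terms in deriving $C=A+AC$ and the verification that the diagonal slices $\hat A_n,\hat C_n$ are exactly the symbols of $A_n,C_n$; once these are in place the operator identity is a purely formal consequence of the homomorphism property, with no analytic obstruction.
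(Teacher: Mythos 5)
Your proof is correct, and after a common starting point it finishes by a genuinely different (and cleaner) mechanism than the paper's. The paper also begins with the Cayley-transform generating-function identity: it writes $C(z_1,z_2)\left(1-\chi\circ f(z_1,z_2)\right)=1$, which is exactly your $C=A+AC$ once the constant term is separated off (the paper is in fact slightly careless here, conflating the constant term $1$ of the Neumann series with $c_{00}=\tfrac12$; your normalization $f=1+2C$, $C=A/(1-A)$ avoids that ambiguity). From this identity the paper extracts the scalar recurrence $c_{n-k,k}=\sum_{p=0}^{k}\sum_{j=k}^{n}a_{n-j,p}\,c_{j-k,k-p}$ by comparing coefficients of each monomial $z_1^{n-k}z_2^{k}$, and then completes the proof by expanding the coefficient of ${B^*}^{k}$ in $A_n+\sum_{i=1}^{n-1}A_iC_{n-i}$ by hand and regrouping that double sum until it matches the scalar recurrence. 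You replace this entire second half by the diagonal slice $z_2=\lambda z_1$, which packages the coefficients along $m+n=N$ into the polynomials $\hat A_N,\hat C_N$, together with the observations that $A_N=\hat A_N(B^*)$, $C_N=\hat C_N(B^*)$ and that evaluation $p(\lambda)\mapsto p(B^*)$ is a unital algebra homomorphism. The two finishes are logically equivalent --- the paper's index bookkeeping is precisely what the homomorphism property automates --- but yours eliminates the error-prone double-sum manipulation, makes transparent why commutativity costs nothing (everything is a polynomial in the single operator $B^*$), and transports with no extra effort to the $n$-variable setting of Subsection \ref{subsec5.3}, where one slices by $z_j=\lambda_{j-1}z_1$ and evaluates at the commuting unitaries in place of $B^*$; the paper merely asserts that those computations are ``similar.'' The two points you flag as needing care --- the constant-term normalization, and the fact that both sliced series begin at degree $1$ in $z_1$ so that the Cauchy product contributes exactly the range $j=1,\dots,n-1$ --- are indeed the only places an error could enter, and you handle both.
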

\begin{proof}
Let 
$C(z_1,z_2):=\sum_{i,j=0}^{\infty}c_{ij}z_{1}^{i}z_{2}^{j}.$
We have 
\[1+\chi\circ f(z_1,z_2)+\chi\circ f(z_1,z_2)^2+\cdots=\frac{f(z_1,z_2)}{2}+c_{00}= C(z_1,z_2).\]
Thus 
$C(z_1,z_2)(1-\chi\circ f(z_1,z_2))=1,$ which is the same as  
\begin{eqnarray*} 
	\lefteqn{(1+c_{10}z_1+c_{01}z_2+c_{20}z_{1}^{2}+
	c_{11}z_1z_2+c_{02}z_{2}^{2}+\cdots )\times}\\&\phantom{\times \times}& (1-a_{10}z_1-a_{01}z_2-
	a_{20}z_{1}^{2}-a_{11}z_1z_2-a_{02}z_{2}^{2}+\cdots)=1.\\
\end{eqnarray*}
For each $k\in\mathbb{N},$ comparing the coefficient of the monomial 
$z_{1}^{n-k}z_{2}^k,$ we have
\[c_{n-k,k}=\sum\limits_{\substack{p=0}}^{k}
\sum\limits_{\substack{j=k}}^{n}a_{n-j,p}c_{j-k,k-p},\]
where $a_{00}=0$.\\
The coefficient of ${B^*}^k$ in $A_n+\sum\limits_{\substack{i=1}}^{n-1}A_iC_{n-i}$ 
is 
\begin{eqnarray*}
\lefteqn{a_{n-k,k}c_{00}+a_{n-k,k-1}c_{01}+a_{n-k-1,k}c_{10}+
	a_{n-k,k-2}c_{02}}\\
 &\phantom{++++}& + a_{n-k-1,k-1}c_{11}+a_{n-k-2,k}c_{20}+\cdots\\
\end{eqnarray*} 
\vspace{-40pt}
\begin{eqnarray*}
\lefteqn{= (a_{n-k,k}c_{00}+a_{n-k,k-1}c_{01}+\cdots +
 	a_{n-k,0}c_{0k})+}\\
 	&\phantom{++++}&(a_{n-k-1,k}c_{10}+a_{n-k-1,k-1}c_{11}+
 	\cdots +a_{n-k-1,0}c_{1,k})+\cdots 
 	\end{eqnarray*}
\vspace{-20pt}
\begin{eqnarray*}	
\lefteqn{ \cdots +(a_{0k}c_{n-k,0}+a_{0,k-1}c_{n-k,1}+\cdots +a_{00}c_{n-k,k})}\\
 	&\phantom{====}& = \sum\limits_{\substack{p=0}}^{k}\sum\limits_{\substack{j=k}}^{n}a_{n-j,p}c_{j-k,k-p}
\end{eqnarray*}
completing the proof of the lemma.
\end{proof}

The relationship between $A_n$ and $C_n$ is given by the following lemma.

\begin{lem}\label{Contractivity and Positivity}
	If $A_n$ and $C_n$ are  defined as above, then   
	\[\left(
	{
	\begin{array}{ccccccc}
		I & C_{1}^* & C_{2}^* & \cdots & C_{n}^* \\
		C_1 & I & C_{1}^* & \cdots & C_{n-1}^*\\ 
		C_2 & C_1 & I & \cdots & C_{n-2}^*\\
		\vdots & \vdots & \vdots & \ddots & \vdots \\
		C_n & C_{n-1} & C_{n-2} & \cdots & I \\
	\end{array} }
	\right)\]
	 is non-negative definite if and only if 
	 \[\left\| \left(
	{
	\begin{array}{ccccc}
		A_1 & A_2 & A_3 &\cdots & A_n \\
		0 & A_1 & A_2 & \cdots & A_{n-1}\\ 
		0 & 0 & A_1 & \cdots & A_{n-2}\\
		\vdots & \vdots & \vdots & \ddots & \vdots \\
		0 & 0 & 0 & \cdots & A_1 \\
	\end{array} }
	\right)\right\|
	\leq 1.\]
\end{lem}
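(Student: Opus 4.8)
The plan is to reduce the operator-valued statement to the scalar identity already established in Lemma \ref{contractivity vs positivity} and its corollary, Proposition \ref{Reason4KP}, by invoking the spectral theorem for the bilateral shift $B$. The key observation is that every block entry of the two matrices in the statement -- namely $A_k=a_{k0}I+a_{k-1,1}B^{*}+\cdots+a_{0k}{B^{*}}^{k}$ and $C_k=c_{k0}I+\cdots+c_{0k}{B^{*}}^{k}$ -- is a polynomial in $B^{*}$, so all of them lie in the commutative von Neumann algebra generated by $B$. Since $B$ is unitary (hence normal), the Fourier transform identifies $\ell^2(\Z)$ with $L^2(\T)$ and carries $B$ to multiplication by $z$; under this identification each $C_k$ becomes multiplication by the trigonometric polynomial $g_k(z)=c_{k0}+c_{k-1,1}\bar z+\cdots+c_{0k}\bar z^{k}$ and each $A_k$ becomes multiplication by $h_k(z)=a_{k0}+\cdots+a_{0k}\bar z^{k}$.

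First I would record the consequence of this fibering. Applying the identification in each of the $n+1$ (respectively $n$) summands, the Hermitian block matrix $\mathcal C_n$ and the block-triangular matrix $\mathcal A_n$ in the statement become multiplication operators, on $L^2(\T)\otimes\C^{n+1}$ and $L^2(\T)\otimes\C^{n}$ respectively, by matrix-valued functions $\lambda\mapsto\mathcal C_n(\lambda)$ and $\lambda\mapsto\mathcal A_n(\lambda)$, where $\mathcal C_n(\lambda)$ is the scalar $(n+1)\times(n+1)$ matrix obtained from $\mathcal C_n$ by replacing $C_k$ with $g_k(\lambda)$ and $C_k^{*}$ with $\overline{g_k(\lambda)}$, while $\mathcal A_n(\lambda)$ is the scalar $n\times n$ Toeplitz matrix obtained by replacing $A_k$ with $h_k(\lambda)$. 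For multiplication operators by continuous (hence essentially bounded) Hermitian matrix functions one has the standard dictionary: $\mathcal C_n\geq 0$ if and only if $\mathcal C_n(\lambda)\geq 0$ for a.e.\ $\lambda\in\T$, and $\|\mathcal A_n\|\leq 1$ if and only if $\|\mathcal A_n(\lambda)\|\leq 1$ for a.e.\ $\lambda\in\T$. This is the step that converts a single operator statement into a family of scalar ones.

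Next I would verify that, for each fixed $\lambda$, the scalar fibre matrices are exactly of the form treated in the planar case and satisfy the hypothesis needed there. Because $C_k$ and $A_k$ are multiplication operators, the operator recurrence $C_n=A_n+\sum_{j=1}^{n-1}A_jC_{n-j}$ of Lemma \ref{recurrence 2 variables} holds if and only if it holds fibrewise, that is $g_n(\lambda)=h_n(\lambda)+\sum_{j=1}^{n-1}h_j(\lambda)g_{n-j}(\lambda)$ for every $\lambda$. Setting $c_k:=g_k(\lambda)$ and $a_k:=h_k(\lambda)$, this is precisely the recurrence of Lemma \ref{recursive coefficients}. Consequently the scalar sequences $(a_k)$ and $(c_k)$ meet the hypothesis under which Lemma \ref{contractivity vs positivity}, and hence Proposition \ref{Reason4KP}, were proved -- and one should note that those proofs use only the recurrence, not any analyticity of an underlying function. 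Proposition \ref{Reason4KP} then gives, for each $\lambda$, that $\mathcal C_n(\lambda)$ is non-negative definite if and only if $\|\mathcal A_n(\lambda)\|\leq 1$. Combining this fibrewise equivalence with the dictionary of the previous paragraph yields $\mathcal C_n\geq 0\iff\|\mathcal A_n\|\leq 1$, which is the assertion of the lemma.

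I expect the main difficulty to be bookkeeping rather than conceptual: one must be sure that the single Fourier identification simultaneously diagonalises every block, so that the two reductions -- of the positivity of $\mathcal C_n$ and of the norm of $\mathcal A_n$ -- are governed by the same parameter $\lambda$, and that the upper-triangular blocks $C_k^{*}$ become $\overline{g_k(\lambda)}$, so that each fibre is genuinely the Hermitian matrix $C_n$ of the planar case rather than its transpose. A direct alternative, avoiding the spectral theorem, is to imitate the induction of Lemma \ref{contractivity vs positivity} verbatim, proving the operator identity $\mathcal P_n\,\mathcal C_n^{\sharp}\,\mathcal P_n^{*}=(I-\mathcal A_n\mathcal A_n^{*})\oplus I$, where $\mathcal P_n$ is the block-triangular matrix with $I$ on the diagonal and $-A_k$ above it and $\mathcal C_n^{\sharp}$ is the entrywise adjoint of $\mathcal C_n$; there the only point genuinely requiring the commutativity of the $A_k$ is the verification that passing from $\mathcal C_n$ to $\mathcal C_n^{\sharp}$ preserves non-negativity, which is again cleanest to see through the fibre representation.
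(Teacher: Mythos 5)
Your argument is correct, but its main line differs from the paper's. The paper proves this lemma in a single sentence: since all the blocks $A_k$, $C_k$ (and their adjoints) are polynomials in the unitary $B^*$, hence mutually commuting, it simply ``adapts the proof of Lemma \ref{contractivity vs positivity}'', i.e.\ re-runs that algebraic induction with operator entries, the recurrence of Lemma \ref{recurrence 2 variables} playing the role of Lemma \ref{recursive coefficients}; this is exactly the ``direct alternative'' you sketch in your last paragraph. Your primary route instead invokes the spectral theorem once: the Fourier transform carries $B$ to multiplication by $z$ on $L^2(\T)$, each block becomes a scalar multiplication operator, and the two conditions of the lemma are checked fibrewise, where Proposition \ref{Reason4KP} applies verbatim with $a_k=h_k(\lambda)$, $c_k=g_k(\lambda)$. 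The fibre picture buys rigor at two points that the paper's terse ``adapt'' leaves implicit: first, the fibre sequences do not come from any holomorphic map into the half plane, so one must know --- as you explicitly note --- that the scalar Lemma \ref{contractivity vs positivity} uses only the recurrence; second, the passage between a block matrix and its entrywise transpose/conjugate, which preserves positivity for scalar matrices but not for general operator matrices, is legitimate here precisely because of the simultaneous diagonalisation. What the paper's route buys is economy: pure matrix algebra with commuting entries, with no need for the measure-theoretic dictionary ($M_F\geq 0$ iff $F(\lambda)\geq 0$ a.e., and $\|M_F\|=\operatorname{ess\,sup}_\lambda\|F(\lambda)\|$). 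Both proofs ultimately rest on the same fact, the commutativity of the von Neumann algebra generated by $B$, but yours is the more self-contained and checkable implementation.
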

\begin{proof}
For each $n\in\mathbb N,$ 
$C_n$ commutes with $C_m$ and $A_m$ 
for all $m\in\mathbb{N}$ and 
hence we can adapt the proof of Lemma \ref{contractivity vs positivity} 
to complete the proof in this case.  
\end{proof}
 An application of the spectral theorem along with  Lemma \ref{Contractivity and Positivity}  
 gives an alternative proof of the Kor\'{a}nyi--Puk\'{a}nszky theorem as shown below.
\begin{proof}[\textbf{Proof of the Kor\'{a}nyi--Puk\'{a}nszky theorem}] The operators $I\otimes B^*$ and $B^*\otimes B^*$ 
are commuting unitaries 
and they have $\T^2$ as their joint spectrum.  
Applying spectral theorem and maximum modulus principle, 
we get the following: 
\begin{equation}\label{Spectral}
\|\chi\circ f(I\otimes B^*,B^*\otimes B^*)\|=\|\chi\circ f\|_{\D^2,\infty}.
\end{equation}
Note that 
$\chi\circ f(I\otimes B^*,B^*\otimes B^*)=A_1\otimes B^* + A_2\otimes {B^*}^2+\cdots,$
where 
$A_n:=a_{n0}I+a_{n-1,1}B^*+\cdots +a_{0n}{B^*}^n$ 
as in Lemma \ref{Contractivity and Positivity}.
Since $\|\chi\circ f\|_{\D^2,\infty}\leq 1,$ 
it follows from \eqref{Spectral} that 
$\|\mathscr{T}(A_1,\ldots,A_n)\|\leq 1$ 
for all $n\in\N$.  
From Lemma \ref{Contractivity and Positivity}, 
we conclude that 
\begin{equation}\label{Principal block}
\left(
	{
	\begin{array}{ccccccc}
		I & C_{1}^* & C_{2}^* & \cdots & C_{n}^* \\
		C_1 & I & C_{1}^* & \cdots & C_{n-1}^*\\ 
		C_2 & C_1 & I & \cdots & C_{n-2}^*\\
		\vdots & \vdots & \vdots & \ddots & \vdots \\
		C_n & C_{n-1} & C_{n-2} & \cdots & I \\
	\end{array} }
	\right)
\end{equation} 
is non-negative definite for all $n\in\N,$ where $C_n:=c_{n0}I+c_{n-1,1}B^*+\cdots +c_{0n}{B^*}^n.$ 
Hence from Theorem \ref{Matrix phi}, 
we see that the Kor\'{a}nyi--Puk\'{a}nszky 
function $\phi$ corresponding to the array 
$\big(\!\!\big(c_{jk}\big)\!\!\big)$ is positive.

Conversely, suppose the Kor\'{a}nyi--Puk\'{a}nszky 
function $\phi$ corresponding to the array 
$\big(\!\!\big(c_{jk}\big)\!\!\big)$ is positive, 
where $c_{00}$ is assumed to be $1/2.$ 
Then from Theorem \ref{Matrix phi}, it follows that operator in \eqref{Principal block} 
is non-negative definite for all $n\in \N$. 
From Lemma \ref{Contractivity and Positivity} 
and equation \eqref{Spectral}, 
we conclude that $\|\chi^{-1}\circ g\|_{\D^2,\infty}\leq 1,$ 
where $g(z_1,z_2)=2\sum_{m,n=0}^{\infty}c_{mn}z_1^mz_2^n.$ 
This is so if and only if 
$g$ maps $\D^2$ into the right half plane $H_+.$
Hence the theorem is proved.
\end{proof}
\subsection{The case of \texorpdfstring{$n$}{TEXT} - variables:} \label{subsec5.3}
Suppose $f$ is holomorphic mapping from polydisc $\D^n$ into $H_+.$ 
Without loss of generality, we assume that $f(\boldsymbol 0)=1/2.$
Let 
\[\chi \circ f(\boldsymbol z)=\sum\limits_{k=0}^{\infty}\sum\limits_{\substack{{i_1,\ldots, i_k=1}}}^{\infty}a_{e_{i_1}+\cdots + e_{i_k}}z_{i_1}\cdots z_{i_k}^{i_k}\]
be the power series expansion of $\chi \circ f.$ 
As before, $\chi \circ f$ maps $\mathbb{D}^n$ into $\mathbb{D}$ and $a_{\boldsymbol 0}=0$. Then with the understanding that $c_{\boldsymbol 0}=1/2,$ we have 
\begin{equation*}
 	 f(\boldsymbol z) = \frac{1+\chi \circ f(\boldsymbol z)}{1-\chi \circ f(\boldsymbol z)}
 	=2\left(c_{\boldsymbol 0}+\sum\limits_{k=1}^{\infty}\sum\limits_{\substack{{i_1,\ldots, i_k=1}}}^{\infty}c_{e_{i_1}+\cdots + e_{i_k}}z_{i_1}\cdots z_{i_k}^{i_k}\right).
\end{equation*}
For $k\in \mathbb N,$ let 
\[C_k=\sum\limits_{\substack{{i_1,\ldots,i_k=1}}}^{n}c_{e_{i_1}+\cdots +e_{i_k}}
\prod_{p=1}^{k} (I^{\otimes (n-i_p)} \otimes B^{*^{\otimes (i_p-1)}}) \mbox{ and }\] 
\[A_k=\sum\limits_{\substack{{i_1,\ldots,i_k=1}}}^{n}a_{e_{i_1}+\cdots +e_{i_k}}
\prod_{p=1}^{k} (I^{\otimes (n-i_p)} \otimes B^{*^{\otimes (i_p-1)}}).\]
Computations similar to the case of $n=2,$ using  $A_k$ and $C_k,$  $k\in \mathbb N,$ we can prove a Lemma analogous to the Lemma \ref{Contractivity and Positivity}. 
Therefore, as before, using spectral theorem for the operators $I^{\otimes (n-j)} \otimes B^{*^{\otimes j}},$  $j=1,\ldots, n,$ 
we  deduce the the Kor\'{a}nyi--Puk\'{a}nszky theorem for the polydisc $\mathbb{D}^n.$ 
  
In the PhD thesis \cite{GR} of the first named author, the proof of Theorem \ref{soln in 2 var} was given using the Kor\'{a}nyi--Puk\'{a}nszky theorem. The proof of Theorem \ref{soln in 2 var} in this note does not make use of the Kor\'{a}nyi--Puk\'{a}nszky theorem. It then appears that the ideas from this proof lead to a different proof of the Kor\'{a}nyi--Puk\'{a}nszky theorem.

\subsection*{Acknowledgement:} We thank Dr. Ramiz Reza for several useful suggestions during the preparation of this article. 
We also thank the referee for bringing the paper \cite{BLTT} to our attention.

\bibliographystyle{amsalpha}\bibliography{CFIntThm}

\end{document}